\newtheorem{theorem}{Theorem}[section]
\newtheorem{lemma}[theorem]{Lemma}
\newtheorem{conjecture}[theorem]{Conjecture}
\newtheorem{proposition}[theorem]{Proposition}
\newtheorem{prop}[theorem]{Proposition}
\newtheorem{corollary}[theorem]{Corollary}
\theoremstyle{definition}
\newtheorem{definition}[theorem]{Definition}
\newtheorem{example}[theorem]{Example}
\newtheorem{question}[theorem]{Question}
\newtheorem{proc}[theorem]{Procedure}
\newtheorem{remark}[theorem]{Remark}
\newcommand{\Q}{{\mathbb Q}}
\newcommand{\Z}{{\mathbb Z}}
\newcommand{\C}{{\mathbb C}}
\newcommand{\R}{\mathbb R}
\newcommand{\St}{{S}}
\newcommand{\invtPoly}{\mathcal{P}}
\newcommand{\Rnn}{\mathbb{R}_{\geq 0}}
\def\lra{\leftrightarrows}
\def\rla{\rightleftarrows}
\def\been{\begin{enumerate}}
\def\enen{\end{enumerate}}
\def\SS{\mathcal S}
\def\CC{\mathcal C}
\def\RR{\mathcal R}
\def\lra{\leftrightarrows}
\def\rla{\rightleftarrows}
\title[Absolute concentration robustness]{Absolute concentration robustness: \\ algebra and geometry\footnote{This article is part of the volume titled 
     ``Computational Algebra and Geometry:
     A special issue in memory and honor of Agnes Szanto''.  
     We dedicate this article to the memory of Agnes Szanto. She was a friend and mentor to us, and her kindness and endless support will forever guide us as we endeavor to make a more inclusive algebraic geometry community. 
}}
\author[Garc\'{\i}a Puente]{Luis David Garc\'{\i}a Puente}
\address{Colorado College}
\author[Gross]{Elizabeth Gross}
\address{University of Hawai`i at M\={a}noa}
\author[Harrington]{Heather A Harrington}
\address{University of Oxford and 
Max Planck Institute of Molecular Cell Biology and Genetics and Technische Universit\"at Dresden}
\author[Johnston]{Matthew Johnston}
\address{Lawrence Technological University}
\author[Meshkat]{Nicolette Meshkat}
\address{Santa Clara University}
\author[P\'{e}rez Mill\'an]{Mercedes P\'{e}rez Mill\'{a}n}
\address{Universidad de Buenos Aires and CONICET}
\author[Shiu]{Anne Shiu}
\address{Texas A\&M University}
\date{12 November 2024}
\begin{document}
 \maketitle

%\makeatletter
%\providecommand\@dotsep{5}
%\makeatother
%\listoftodos\relax

\begin{abstract}
Motivated by the question of how biological systems maintain homeostasis in changing environments, Shinar and Feinberg introduced in 2010 the concept of absolute concentration robustness (ACR).
A biochemical system exhibits ACR in some species if the steady-state value of that species does not depend on initial conditions.  Thus, a system with ACR can maintain a constant level of one species even as the initial condition changes.
%environment changes. 
Despite a great deal of interest in ACR in recent years, the following basic question remains open: How can we determine quickly whether a given biochemical system has ACR?  Although various approaches to this problem have been proposed, we show that they are incomplete. Accordingly, we present new methods for deciding ACR, which harness computational algebra.  We
illustrate our results on several biochemical signaling networks.

\vspace{.1in}
\noindent
{\bf MSC Codes:}
37N25,  %Dynamical systems in biology
92E20, %Classical flows, reactions, etc. in chemistry
12D10, %Polynomials in real and complex fields: location of zeros (algebraic theorems)
37C25, %Fixed points and periodic points of dynamical systems; fixed-point index theory; local dynamics
65H14, %Numerical algebraic geometry
14Q20  %	Effectivity, complexity and computational aspects of algebraic geometry
\end{abstract}

\maketitle
\section{Introduction} \label{sec:intro}

How do cells maintain function in fluctuating environments? 
%There are many factors which can disrupt the fidelity of input-output relationships. 
In signaling transduction pathways, such fluctuations might be in the abundances of signaling proteins or may arise from 
crosstalk between proteins.  
Nevertheless, robustness of an output signal has been observed experimentally.  In \emph{E.~coli}, for instance, such robustness is found in the EnvZ-OmpR osmoregularity signaling pathway \cite{Batchelor2003,Shinar2007} and the glyoxylate bypass mechanism \cite{LaPorte,Shinar2009}, often over several orders of magnitude in the abundances of individual signaling proteins.

Mathematically, this phenomenon was first rigorously studied by Shinar and Feinberg who introduced the concept of {\em absolute concentration robustness} (ACR)~\cite{shinar2010structural}.  A biochemical system exhibits ACR in some species $X_i$ if at every positive steady state, regardless of initial conditions, the concentration of $X_i$ is the same.  
Thus, a system with ACR maintains species $X_i$ at a constant level, even under changes to the initial condition.

Most prior results regarding ACR are sufficient conditions for ACR.  
The first such criterion, due to Shinar and Feinberg, can be verified easily (as implemented in~\cite{kuwahara2017acre})
from the structure of the network of biochemical interactions \cite{shinar2010structural}. Notably, this sufficient condition for ACR is also necessary for some classes of networks~\cite{meshkat-shiu-torres}.  
Other sufficient criteria for ACR have harnessed methods based on elementary modes~\cite{neigenfind2011biochemical,neigenfind2013relation} and
the theory of network translation~\cite{Johnston2014,tonello2017network}. 
More recently, the concept of ``local ACR" has been defined and analysed as a necessary condition for ACR~\cite{beatriz-elisenda}.

Other studies have focused on developing and applying results on ACR to understand the biochemical mechanisms underlying robustness, for example, in mechanisms with dimerization and bifunctional proteins \cite{dexter2015,dexter2013dimerization}. 
This topic of biological robustness has generated interest in ACR from control theory~\cite{hidden, controller} and has also inspired a refined definition of ACR which better captures the types of robustness relevant to applications~\cite{joshi-craciun,joshi-craciun-2}. 
Additionally, the property of ACR has been studied in randomly generated networks~\cite{joshi2023prevalence} and in models that are stochastic, rather than deterministic~\cite{Anderson2016, A-E-J,Enciso2016}.

ACR has also been analyzed using methods from computational algebra~\cite{kaihnsa2023absolute, karp2012complex, beatriz-elisenda}. 
Our work builds on those works to relate ACR to various algebraic objects (ideals and varieties) associated to a (biochemical or mass-action) system. 
We also distinguish between ACR of a given system and ACR for a family of systems arising from a given reaction network. More generally, we explore the underlying algebraic and geometric structure of ACR.

We also consider the problem of deciding whether a network or system has ACR. 
We first 
%assess this problem for two classes of networks common in applications -- those admitting a steady-state parameterization or having no conservation laws -- and then we 
show by counterexample that various approaches (some in the literature) to this decision problem are incomplete (\S~\ref{sec:how-not-to-decide-ACR})
%Finally, 
Accordingly, 
we develop new procedures, using computer algebra, to tackle this problem for mass-action systems (nevertheless, many of our examples illustrate these procedures without any dependence on the rate constants and so allow us to draw conclusions about ACR at the network level).  
Specifically, we use a new ideal, which we call the positive-restriction ideal (Section~\ref{sec:comp_considerations}), specializations of Gr\"obner bases (Section~\ref{sec:specialized-GB}) and
numerical algebraic geometry (Section~\ref{sec:numerical}). 

We are not the first to harness numerical algebraic geometry~\cite{case-study, bates-gunawardena} and specializations of Gr\"obner bases~\cite{DPST} to prove results about biochemical systems.  
Indeed, a great deal of research has been conducted recently using algebraic techniques to analyze steady states of mass-action systems.  
Many such investigations concern when steady states are defined by binomial equations~ \cite{TDS, Johnston2014, TSS}.  
Other studies have focused on computing steady-state invariants \cite{karp2012complex}, constructing and harnessing steady-state parameterizations \cite{CFMW, DPST, GRPD, J-M-P, OSTT, messi}, 
computing the maximum number of steady states~\cite{case-study, mv-small-networks}, and characterizing bistability and oscillations~\cite{mixed,erk-limit, torres-feliu}.

The outline of our work is as follows. 
We introduce mass-action systems,  and then examine the relationship between ACR and multistationarity, steady-state parameterizations, and 1-species networks
in Section~\ref{sec:bkrd-CRS}.  
In Section~\ref{sec:comp_considerations}, we consider computational aspects of assessing ACR, we present several examples which illustrate the limitations to sufficiency and necessity of the presented conditions with respect to establishing ACR, and we introduce an ideal-decomposition algorithm for establishing ACR.
Section~\ref{sec:specialized-GB} investigates methods for detecting ``zero-divisor ACR.''
In Section~\ref{sec:numerical}, we present numerical methods to detect or to preclude ACR, and we end with a discussion in Section~\ref{sec:disc}.

\section{Mass-action systems and ACR} \label{sec:bkrd-CRS}
This section introduces chemical reaction networks (Section~\ref{sec:CRN}), mass-action systems (Section~\ref{sec:CRS}), their steady states (Section~\ref{subsec:steady-states}), 
and ACR (Section~\ref{sec:ACR}). We also assess the problem of deciding ACR for several classes of networks
%two classes of networks common in applications: those admitting a steady-state parameterization and those having no conservation laws 
(Sections~\ref{sec:mss-acr}--\ref{sec:1-species-algo}).

\subsection{Chemical reaction networks} \label{sec:CRN}

\begin{definition} \label{def:crn}
A {\em chemical reaction network} $G=(\SS,\CC,\RR)$ consists of three finite sets:
\begin{enumerate}
\item a set of chemical {\em species} $\SS = \{A_1,A_2,\dots, A_n \}$, 
\item a set  $\CC = \{y_1, y_2, \dots, y_p\}$ of {\em complexes} (finite nonnegative-integer combinations of the species), and 
\item a set of {\em reactions}, ordered pairs of the complexes: $\RR \subseteq (\CC \times \CC) \smallsetminus \{ (y,y) \mid y \in \CC\}$.
\end{enumerate}
%A {\em subnetwork} of a network $G = (\SS, \CC, \RR)$ is a network $G' = (\SS', \CC', \RR')$ with $\RR' \subseteq \RR$.
\end{definition}
%\noindent
%Throughout this work, the integer unknowns~$p$, $n$, and $r$ denote the numbers of
%complexes, species, and reactions, respectively.  

A network can be viewed as a directed graph whose nodes are complexes and whose edges correspond to the reactions (and so an edge $(y,y')$ can be denoted by $y \to y'$). 

Next, writing the $i$-th complex as $y_{i1} A_1 + y_{i2} A_2 + \cdots + y_{in}A_n$ (where $y_{ij} \in \mathbb{Z}_{\geq 0}$ for $j=1,2,\dots,n$), 
%then it defines the monomial $x_A^{y_{i1}} x_B^{y_{i2}} \cdots$.
we introduce the following monomial:
%we associate to it the monomial 
$$ x^{y_i} \,\,\, := \,\,\, x_1^{y_{i1}} x_2^{y_{i2}} \cdots  x_n^{y_{in}}~. $$
%	More precisely, if the $i$-th complex is $y_{i1} A + y_{i2} B + \cdots$ (where $y_{ij} \in \mathbb{Z}_{\geq 0}$ for $j=1,2,\dots,s$), then it defines the monomial $x_A^{y_{i1}} x_B^{y_{i2}} \cdots$.
(By convention, the {\em zero complex} yields the monomial $x^{(0,\dots,0)}=1$.)  
%For example, the two complexes in the reaction $A+B \to 3A + C$ considered earlier give rise to 
%the monomials $x_{A}x_{B}$ and $x^3_A x_C$, which determine the vectors 
%$y_1=(1,1,0)$ and $y_2=(3,0,1)$.  
The vectors $y_i$ define the rows of a $p \times n$-matrix of nonnegative integers, which we denote by $Y=(y_{ij})$.
Next, the unknowns $x_1,x_2,\ldots,x_n$ represent the concentrations of the $n$ species in the network, and we regard them as functions $x_i(t)$ of time $t$.
%The monomial labels form the entries in the following vector: 
%$$ \Psi(x) \quad = \quad \bigl( x^{y_1}, ~ x^{y_2} , ~ \ldots ~ ,  ~ x^{y_m} \bigr)^t~.$$

For a reaction $y_i \to y_j$ from the $i$-th complex to the $j$-th complex, the {\em reaction vector}  $y_j-y_i$ encodes the net change in each species that results when the reaction takes place.  
%The {\em stoichiometric matrix} $\Gamma$ is the $n \times r$ matrix (where $r$ is the number of reactions) whose $k$-th column 
%is the reaction vector of the $k$-th reaction, 
%i.e., it is the vector $y_j - y_i$ if $k$ indexes the 
%reaction $y_i \to y_j$. 
The {\em stoichiometric subspace} is the vector subspace of $\mathbb{R}^n$ spanned by the reaction vectors $y_j-y_i$, and we will denote this space by $\St$: % and its dimension by $\sigma$:
\begin{equation} \label{eq:stoic_subs}
  \St~:=~ \mathbb{R} \{ y_j-y_i \mid  y_i \to y_j~ {\rm is~in~} \RR \}.
\end{equation}
  %Note that we have $\St = \im(\Gamma)$. 
  %The {\em deficiency} of a reaction network is given by $\delta = n - l - s$ where $n$ is the number of complexes, $l$ is the number of linkage classes, and $s = \dim(S)$.

Networks appearing in our examples have only a few species, and so we denote the species by $A, B, C, \dots$ instead of $X_1, X_2, X_3 \dots $.
  
 \begin{example} \label{ex:1-rxn}
Consider the network $G = \{A+B \to 3A + C\}$, which consists of a single reaction (with 3 species and 2 complexes). We have $y_2-y_1 =(2,-1,1)$, which
means that with each occurrence of the reaction, two units of $A$ and one of $C$ are produced, while one unit of $B$ is consumed.  This vector $(2,-1,1)$ spans the
stoichiometric subspace $\St$. % for the network. 
\end{example}

\subsection{Mass-action systems} \label{sec:CRS}

%---------------------------------
% The ODEs
%---------------------------------
The dynamical systems considered in this work come from assigning mass-action kinetics to a chemical reaction network.  
According to the law of mass-action, the rate of each chemical reaction is directly proportional to the product of the concentrations of the reactants.
Therefore, we associate to each reaction $y_i \to y_j$ of a network, %labeled by 
a positive parameter $\kappa_{ij}$, the {\em rate constant} of the reaction. 
By ordering the $r$ reactions, a choice of rate constants can be represented as a vector
$(\kappa_{ij}) \in
\mathbb{R}^{r}_{>0}$.

\begin{definition} \label{def:CRS} 
A {\em mass-action system}  $(G,\kappa)$ refers to the  dynamical system arising from a specific chemical reaction network $G=(\SS, \CC, \RR)$ and a choice of rate parameters $\kappa=(\kappa_{ij}) \in
\mathbb{R}^{r}_{>0}$ (here $r$ denotes the number of
reactions), as follows: 
% ODE's of MASS-ACTION
\begin{align} \label{eq:ODE-mass-action}
\frac{dx}{dt} \quad = \quad \sum_{ y_i \to y_j~ {\rm is~in~} \RR} \kappa_{ij} x^{y_i}(y_j - y_i) \quad =: \quad f_{\kappa}(x)~=~ (f_{\kappa}(x)_1, f_{\kappa}(x)_2, \dots, f_{\kappa}(x)_n)~.
\end{align}
\end{definition}

In this article, we treat the rate constants $\kappa_{ij}$ as (positive) unknowns in order to analyze the entire family of dynamical systems that arise from a given network, as the $\kappa_{ij}$'s vary.

By construction, the  vector $\frac{d x}{dt}$ in  (\ref{eq:ODE-mass-action}) lies in $\St$ for all time $t$.   
Additionally, the positive orthant is forward-invariant~\cite{Vol72}.  Hence, a trajectory $x(t)$ beginning at a nonnegative %positive 
vector $x(0)=x^0 \in \R^n_{\geq 0}$ remains in the {\em stoichiometric compatibility class}, which we denote by
\begin{align}\label{eqn:invtPoly}
\invtPoly~:=~(x^0+\St) \cap \mathbb{R}^n_{\geq 0}~, 
\end{align}
for all positive time.  In other words, $\invtPoly$ is forward-invariant with respect to the dynamics~(\ref{eq:ODE-mass-action}).    

\begin{example} \label{ex:generalized-shinar--Feinberg}
The following is a ``generalized Shinar--Feinberg network''~\cite{meshkat-shiu-torres}:
\begin{align} \label{eq:gen-SF-net}
\{ B \to A, ~ 2A+B \to A+2B \}~.    
\end{align}
The mass-action ODEs~\eqref{eq:ODE-mass-action} arising from this network are as follows:
 \begin{align} \label{eq:ode-shinar--Feinberg}
    \frac{dx_A}{dt} ~&=~ \kappa_1 x_B - \kappa_2 x_A^2 x_B \\
    \frac{dx_B}{dt} ~&=~ -\kappa_1 x_B + \kappa_2 x_A^2 x_B~, \notag
 \end{align}
 where $\kappa_1$ and $\kappa_2$, respectively, are the rate constants for the first and second reactions in~\eqref{eq:gen-SF-net}.  
 The stoichiometric subspace is spanned by the vector $(1,-1)$, so the stoichiometric compatibility classes are the line segments defined by $c \in \mathbb{R}_{>0}$, as follows:
 \begin{align} \label{eq:example-SCC}
 \invtPoly_c ~=~ \{ (x_A,~x_B) \in \mathbb{R}^{2}_{\geq 0} \mid x_A+x_B=c\}~.
 \end{align} 
\end{example}

\subsection{Steady states and related ideals} \label{subsec:steady-states}

Here we define steady states and the polynomial ideals that they generate. 
% DEFINITIONS: steady state, non degenerate, etc.
\begin{definition} \label{def:steady-ste} 
Consider a mass-action system $(G, \kappa)$ with $n$ species. 
\begin{enumerate}
	\item A {\em steady state} %of the mass-action system 
	is a nonnegative concentration vector $x^* \in \Rnn^n$ at which the ODEs~\eqref{eq:ODE-mass-action}  vanish: $f_{\kappa} (x^*) = 0$.  
%	\item A steady state $x^*$ is {\em nondegenerate} if ${\rm Im}\left( df_{\kappa} (x^*)|_{\St} \right) = \St$.  (Here, $df_{\kappa}(x^*)$ is the Jacobian matrix of $f_{\kappa}$ at $x^*$.)  
%	\item A nondegenerate steady state is %{\em hyperbolic} if each of the $\sigma:= \dim (\St)$ nonzero eigenvalues of $df_{\kappa}(x^*)$ has nonzero real part and is 
%{\em exponentially stable} if each of the $\sigma:= \dim(\St)$ nonzero eigenvalues of $df_{\kappa}(x^*)$ has negative real part. 
%\end{enumerate}
We distinguish between {\em positive steady states} $x ^* \in \mathbb{R}^n_{> 0}$ and {\em boundary steady
states} $x^* \in  \left( \mathbb{R}^n_{\geq 0} \smallsetminus \mathbb{R}^n_{> 0} \right)$.  
	\item The {\em positive steady-state locus} %of a mass-action system 
	is the set of all positive steady states.
	\item The {\em steady-state ideal} is the ideal in the polynomial ring $\mathbb R [x_1, x_2, \ldots , x_n] $ 
	generated by the right-hand sides of the ODEs in~\eqref{eq:ODE-mass-action}:
	\[
	I(G,\kappa) ~:=~
	\langle
	f_{\kappa}(x)_1, ~f_{\kappa}(x)_2, ~\dots, ~f_{\kappa}(x)_n
	\rangle~.
	\]
	\item The \emph{ideal of the positive steady-state locus} is defined as follows:
$$
\sqrt[>0]{I(G,\kappa) }~:=~
 \left\{ h \in \mathbb R[x_1, x_2, \ldots , x_n] \ | \ h(x) = 0 \text{ for all positive steady states } x \right\} ~.$$
\end{enumerate}
\end{definition}

We recall the definition of the \emph{radical} of an ideal %$I\subseteq \R[x_1,x_2,\dots,x_n]$:
$I$ of a ring $R$:
\begin{equation}\label{eq:radical}
 \sqrt{I}~:=~ \left\{ h \in R \ | \ h^m \in I\text{ for some } m\in\Z_{>0} \right\}~.
 %\left\{ h \in \mathbb R[x_1, x_2, \ldots , x_n] \ | \ h^m\in I\text{ for some } m\in\Z_{>0} \right\}.
\end{equation}
%This definition can be extended to ideals of $ \mathbb{K}[x_1,x_2, \dots, x_n]$, where $\mathbb K$ is any characteristic-zero field.

\begin{remark}  \label{rmk:levels-invariants}
By construction, we have $I(G,\kappa) 
\subseteq \sqrt[>0]{I(G,\kappa) }$.  This containment of ideals can be refined, yielding three ``levels'' of ``steady-state invariants''
% as articulated by Dickenstein~
\cite[equation~(7)]{Dickenstein:Invitation}:
\begin{align} \label{eq:ideal-containments}
%{\rm steady-state~ideal} ~=~
I(G,\kappa) 
~\subseteq~
\sqrt{I(G,\kappa)}
~\subseteq~
\sqrt[{> 0}]{I(G,\kappa)}~.
\end{align}
%Here, $\sqrt{I(G,\kappa)}$ denotes the radical of the steady-state ideal. 
The fact that the containments in~\eqref{eq:ideal-containments} are, in general, strict, was noted by Dickenstein~\cite{Dickenstein:Invitation}\footnote{We make a small clarification: Dickenstein considered polynomials vanishing on {\em all} steady states, whereas we consider only positive steady states.  
As a result, the ideal Dickenstein called the ``positive real radical of the steady-state ideal'' is closely related to, but not the same as, our 
ideal $\sqrt[{> 0}]{I(G,\kappa)}$.} 
and is the reason behind many of the ``warning'' examples shown later in Section~\ref{sec:how-not-to-decide-ACR}.  
%However, Dickenstein's examples do {\em not} arise from reaction systems, whereas our examples do.
\end{remark}

\begin{example} \label{ex:steady-state-ideals}
Consider the network $G=\{ 2A \to A,~ 3A \to 5A,~ 4A \to 3A\}$.  When all three rate constants are taken to be 1 (that is, $\kappa^*=(1,1,1)$), 
the resulting mass-action ODE is the following:
\[
	\frac{dx_A}{dt} ~=~ - x_A^4 + 2x_A^3 -x_A ^2 ~=~ - x_A^2(x_A-1)^2~. 
\]
(Notably, $x_A^*=1$ is the unique positive steady state.) 
The ideals in~\eqref{eq:ideal-containments} are thus as follows:
\[
I(G,\kappa^*) 
~=~
\langle x_A^2 (x_A-1)^2 \rangle
~\subsetneq~
\sqrt{I(G,\kappa^*)}
~=~
\langle x_A(x_A-1) \rangle
~\subsetneq~
\sqrt[{> 0}]{I(G,\kappa^*)}
~=~
\langle x_A-1 \rangle~.
\]
\end{example}

Next, we move from systems $(G,\kappa)$ to networks $G$ by viewing $\kappa$ as unknown.

\begin{definition} \label{def:steady-ste-locus} 
Let $G$ be a chemical reaction network with $n$ species and $r$ reactions. 
\begin{enumerate}
 	\item $G$ is {\em multistationary} if there exist positive rate constants $\kappa \in \mathbb{R}^r_{>0}$ such that the resulting mass-action system~\eqref{eq:ODE-mass-action} admits
two or more positive steady states in some stoichiometric compatibility class~\eqref{eqn:invtPoly}. 
	\item The {\em positive steady-state/rate locus} is the set of all pairs of positive steady states and corresponding rate constants:
	\[
	\mathcal{L} ~:=~
	\left\{ (x,\kappa) \in \mathbb{R}^n_{>0} \times \mathbb{R}^r_{>0} \mid
		x  \text{ is a steady state of the mass-action system } (G, \kappa)  \right\} ~.
	\]
	\item The {\em ideal of the positive steady-state/rate locus} is:
	\[
	I(\mathcal{L}) ~:=~
		 \left\{ h \in \mathbb R[x_1, x_2, \ldots , x_n; \kappa_{ij}] \mid 
		 h(x,\kappa)=0  \text{ for all } (x,\kappa) \in \mathcal{L} \right\}~. 
	\]
\end{enumerate}
\end{definition}

\begin{example}[Example~\ref{ex:generalized-shinar--Feinberg}, continued] \label{ex:ideals-network}
For the network $G=\{ B \overset{\kappa_1}\to A, ~ 2A+B \overset{\kappa_2}\to A+2B \}$, 
%~\eqref{eq:gen-SF-net}, 
it is straightforward to compute the positive steady-state/rate locus:
\begin{align} \label{eq:locus-for-generalized-sf-net}
	\mathcal{L} ~=~
		\left\{ (x,\kappa) \in \mathbb{R}^2_{>0} \times \mathbb{R}^2_{>0} \mid x_A = \sqrt{\kappa_1/ \kappa_2} \right\}~.
\end{align}
It follows that the ideal of the positive steady-state/rate locus is the following:
	\begin{align} \label{eq:ideal-for-SF-net}
	I(\mathcal{L}) ~=~
		\langle
		\kappa_1 - \kappa_2 x_A^2
		\rangle~.
		\end{align}
We observe that, for every choice of positive rate constants $\kappa^*= (\kappa^*_1, \kappa^*_2) \in \mathbb{R}^2_{>0}$, every positive steady state $(x_A^*, x_B^*)$ of the mass-action system $(G,\kappa^*)$ has the same value of $x_A^*$, namely, $x^*_A = \sqrt{\kappa^*_1/ \kappa^*_2}$.  In the following subsection, we give a name to this property.
\end{example}

\begin{remark} \label{rmk:hungarian-lemma}
Any positive part of a variety is equal to the positive steady-state locus of some mass-action system, 
because we can just multiply all the equations by $x_1x_2 \dots x_n$ so that the resulting equations are mass-action~\cite[\S 2]{Dickenstein:Invitation}.  Indeed, many of the examples in this work were generated through such a trick.
\end{remark}

\subsection{Absolute concentration robustness} \label{sec:ACR}
In this subsection, we recall the definition of ACR for mass-action systems and networks (Definitions~\ref{def:ACR} and~\ref{def:ACR-conditional}).
We also relate ACR to steady-state ideals (Propositions~\ref{prop:ideal} and~\ref{prop:ideal-symbol}).

\subsubsection{ACR for mass-action systems}

The following definition %of absolute concentration robustness 
was introduced by Shinar and Feinberg~\cite{shinar2010structural}. 

\begin{definition}[ACR for mass-action systems] \label{def:ACR}
A mass-action system $( G, \kappa)$ has \emph{absolute concentration robustness (ACR) in species $X_i$} if, for every positive steady state $x$, the value of $x_i$ is the same.  %does not depend on the compatibility class.
This value of $x_i$ is the {\em ACR-value}.
\end{definition} 

For simplicity, we say that $(G,\kappa)$ has ACR if it has ACR is some species. % $X_i$.

\begin{definition}[Vacuous ACR] \label{def:vacuous}
A mass-action system $( G, \kappa)$ has {\em vacuous ACR} if it has \uline{no} positive steady states (or, equivalently, the ideal of the positive steady-state locus is the improper ideal $\langle 1 \rangle$).  
\end{definition} 

An example of a system with vacuous ACR 
%is any one arising
arises from the network $G=\{A \to B\}$.
Vacuous ACR does not confer any ``robustness'' to the system, and hence our interest is in non-vacuous ACR.
Also, ACR is most relevant for systems having linear conservation laws (that is, the stoichiometric subspace~\eqref{eq:stoic_subs} is a proper subspace of $\mathbb{R}^n$, where $n$ is the number of species), as this is the situation when there is more than one compatibility class and 
so we can compare steady states in distinct classes.
%\sout{the ACR value remains constant even when the conserved quantities vary}.} 
%so we can compare the steady-state values across classes.  
Nevertheless, for ease of presentation, we include some examples without such linear conservation laws.

\begin{example}[Example~\ref{ex:steady-state-ideals} continued] \label{ex:1-species-ACR}
We saw earlier that the mass-action system given by the network
$G=\{ 2A \to A,~ 3A \to 5A,~ 4A \to 3A\}$ and rate constants $\kappa^*=(1,1,1)$ has a unique positive steady state (namely, $x_A^*=1$).  It follows that this system has ACR (in $A$). 
\end{example}

\begin{example}[Example~\ref{ex:ideals-network} continued] \label{ex:generalized-shinar--Feinberg-2}
We observed above that, for $G=\{ B \to A, ~ 2A+B \to A+2B \}$, the mass-action system 
$(G,\kappa^*)$ arising from any choice of positive rate constants $\kappa$ has ACR in $A$.
\end{example}

%\subsubsection{Specialized rate constants}

The following result, which was observed in~\cite{karp2012complex}, relates ACR with the ideal of the positive steady state locus (from Definition~\ref{def:steady-ste}). 

% LATEX DOESN'T LIKE THIS LABELING OF THE PROP.
%\begin{prop}[ACR and $\sqrt[>0]{I(G,\kappa)}$] \label{prop:ideal}
\begin{prop}[ACR and ideals] \label{prop:ideal}
A mass-action system $( G, \kappa)$ has ACR in species $X_i$ if and only if $x_i - \alpha$, for some $\alpha >0$, is in the ideal of the positive steady state locus of $(G,\kappa)$. 
%NOT NEEDED: (and the ideal is not $\langle 1 \rangle$, i.e., there exists a positive steady state).
\end{prop}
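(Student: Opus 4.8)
The plan is to prove both directions directly from the definitions, using the characterization of the ideal $\sqrt[>0]{I(G,\kappa)}$ as the set of polynomials vanishing on the positive steady-state locus $V := \{x \in \mathbb{R}^n_{>0} \mid f_\kappa(x) = 0\}$.

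First I would handle the forward direction. Suppose $(G,\kappa)$ has ACR in species $X_i$ with ACR-value $\alpha > 0$. By Definition~\ref{def:ACR}, this means that for every positive steady state $x \in V$ we have $x_i = \alpha$, i.e. $(x_i - \alpha)(x) = 0$ for all $x \in V$. Hence the polynomial $x_i - \alpha$ lies in $\sqrt[>0]{I(G,\kappa)}$ by the very definition of that ideal, and $\alpha > 0$, as required. A small point worth noting here is that this argument presumes $V \neq \emptyset$: if $(G,\kappa)$ has vacuous ACR then $\sqrt[>0]{I(G,\kappa)} = \langle 1 \rangle$, which contains $x_i - \alpha$ for every $\alpha$, so the statement still holds (trivially) in that case; I would either add the standing assumption that ACR here means non-vacuous ACR, or remark that the vacuous case is consistent with the claim.

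For the converse, suppose $x_i - \alpha \in \sqrt[>0]{I(G,\kappa)}$ for some $\alpha > 0$. By definition of $\sqrt[>0]{I(G,\kappa)}$, this says exactly that $(x_i - \alpha)(x) = 0$, i.e. $x_i = \alpha$, for every positive steady state $x$. Thus every positive steady state has the same value of $x_i$, namely $\alpha$, which is by definition ACR in $X_i$ (with ACR-value $\alpha$). Again the edge case where there are no positive steady states is vacuously consistent. I would also remark that the hypothesis $\alpha > 0$ is what makes the ACR-value a genuine positive concentration, matching the convention in Definition~\ref{def:ACR}.

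There is no real obstacle here: the proposition is essentially a translation between the language of ACR and the language of ideals, and the only subtlety is bookkeeping around the vacuous case and the positivity of $\alpha$. The one thing I would be careful to state clearly is that membership in $\sqrt[>0]{I(G,\kappa)}$ is characterized pointwise (it is a vanishing ideal on $V$), so that ``$x_i - \alpha$ is in the ideal'' is literally equivalent to ``$x_i - \alpha$ vanishes on all of $V$'' — this is immediate from the definition given in Definition~\ref{def:steady-ste}(4), so no Nullstellensatz-type input is needed.
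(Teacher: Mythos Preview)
Your proposal is correct and follows essentially the same approach as the paper's proof: both handle the vacuous case separately (noting that then $\sqrt[>0]{I(G,\kappa)} = \langle 1 \rangle$ contains $x_i - \alpha$ trivially) and otherwise observe that, by the very definition of the ideal of the positive steady-state locus as a vanishing ideal, membership of $x_i - \alpha$ is literally equivalent to the statement that every positive steady state has $x_i = \alpha$. Your remark that no Nullstellensatz-type input is needed is apt and matches the spirit of the paper's one-line argument.
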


\begin{proof}
If $(G,\kappa)$ has no positive steady states (that is, $\sqrt[>0]{I(G,\kappa) } =  \langle 1 \rangle$), then $x_i-1$ is in $\sqrt[>0]{I(G,\kappa)}$ and $(G,\kappa)$ vacuously has ACR (recall Definition~\ref{def:vacuous}).  Now assume that $(G,\kappa)$ has at least one positive steady state. 
%$\sqrt[>0]{I(G,\kappa) } \neq \langle 1 \rangle$.  
Then $( G, \kappa)$ has ACR in $X_i$ with ACR-value $\alpha>0$ if and only if $x^*_i=\alpha$ for every positive steady state $x^*$.  The result now follows directly.  \end{proof}

\begin{remark} \label{rem:local-acar}
    A necessary condition for ACR, called ``local ACR", for mass-action systems (i.e., with fixed rate constants) appears in~\cite[Theorem 4.11]{beatriz-elisenda} and is an easy-to-check linear algebra condition, with a refinement for the case where the rate constants are not fixed in \cite[Theorem 4.15]{elisenda-oskar-beatriz}.
\end{remark}

\subsubsection{ACR for networks}
If %a mass-action system 
$(G,\kappa)$ has ACR, then does ACR persist when $\kappa$ is replaced by another choice of rate constants?  This question motivates the following definition.

\begin{definition}[ACR for networks] \label{def:ACR-conditional}
A network $ G$ with $r$ reactions has:
	\begin{enumerate}[(i)]
	\item the {\em capacity for ACR} if there exist a species $X_i$ and some $\kappa \in \mathbb R_{>0}^{r}$ such that $( G, \kappa)$ has {ACR} in $X_i$; 
%	\item {\em ACR in} $X_i$ if $( G, \kappa)$ has ACR in $x_i$ for all %/generic 
%		choices of $\kappa \in \mathbb R_{>0}^r$, 
%	\item {\em almost-everywhere ACR in} $X_i$ if $( G, \kappa)$ has ACR in $x_i$ for generic 
%		choices of $\kappa \in \mathbb R_{>0}^r$, 
	\item {\em unconditional ACR} (or, simply, {\em ACR}) if there exists a species $X_i$ such that for all $\kappa \in \mathbb R_{>0}^{r}$, the mass-action system $( G, \kappa)$ has ACR in $X_i$.
%for all choices of $\kappa \in \mathbb R_{>0}^{m}$, the system 
%$( G, \kappa)$ has {ACR} in some species.
	\end{enumerate}
\end{definition}

Notice that networks with (unconditional) ACR automatically have the capacity for ACR.  These ideas are related to ``hybrid robustness'' \cite[pg.\ 889]{dexter2015}.

\begin{example}[Example~\ref{ex:generalized-shinar--Feinberg-2} continued] \label{ex:generalized-shinar--Feinberg-3}
From our earlier observations, we see that the network $\{ B \to A, ~ 2A+B \to A+2B \}$ has (unconditional) ACR (in species $A$, but not $B$).
\end{example}

\begin{example}[Example~\ref{ex:1-species-ACR} continued] \label{ex:capacity}
We saw that the network $G=\{ 2A \to A,~ 3A \to 5A,~ 4A \to 3A\}$ has the capacity for ACR.  
We now claim that $G$ does {\em not} have (unconditional) ACR.  
To see this, consider the rate constants $\kappa^*=(2,1.5,1)$.  
The resulting ODE is 
\[
	\frac{dx_A}{dt} ~=~ - x_A^4 + 3x_A^3 -2x_A ^2 ~=~ - x_A^2(x_A-1)(x_A-2)~,
\]
and so there are two positive steady states, $x_A^*=1$ and $x_A^*=2$.  Hence, $(G,\kappa^*)$ does not have ACR, which verifies the claim.
\end{example}

Recall that Proposition~\ref{prop:ideal} pertains to ACR in mass-action systems (i.e., with specialized rate constants).  We next state a version of that result for networks (i.e., with symbolic rate constants).  Note that while Proposition~\ref{prop:ideal} gives an equivalence between ACR and a condition on a certain ideal, the following result asserts only one such implication.

\begin{prop}[ACR and $ I(\mathcal{L})$] \label{prop:ideal-symbol}
Let $X_i$ be a species of a reaction network $G$.  
Let $I(\mathcal L)$ denote the ideal of the positive steady-state/rate locus of $G$.  
If $x_i - \alpha \in I(\mathcal L)$, for some 
$\alpha \in \mathbb{R}[\kappa_{ij} \mid i \to j \mathrm{~is~a~reaction~of~} G ]$, 
%is in the ideal of the positive steady-state/rate locus of $G$, 
then $G$ has ACR in $X_i$.
\end{prop}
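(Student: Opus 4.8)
The plan is to show that the hypothesis $x_i - \alpha \in I(\mathcal{L})$ forces, for every fixed choice of positive rate constants $\kappa^* \in \mathbb{R}^r_{>0}$, the coordinate $x_i$ to take the constant value $\alpha(\kappa^*)$ at all positive steady states of $(G, \kappa^*)$, and then to invoke Definition~\ref{def:ACR-conditional}(ii). First I would fix an arbitrary $\kappa^* \in \mathbb{R}^r_{>0}$ and take an arbitrary positive steady state $x^*$ of the mass-action system $(G, \kappa^*)$. Then the pair $(x^*, \kappa^*)$ lies in the positive steady-state/rate locus $\mathcal{L}$, so by definition of $I(\mathcal{L})$ every polynomial in that ideal vanishes at $(x^*, \kappa^*)$; in particular, evaluating $x_i - \alpha$ gives $x_i^* - \alpha(\kappa^*) = 0$, i.e. $x_i^* = \alpha(\kappa^*)$. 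Since $x^*$ was an arbitrary positive steady state, every positive steady state of $(G, \kappa^*)$ has the same $x_i$-coordinate, namely $\alpha(\kappa^*)$, so $(G, \kappa^*)$ has ACR in $X_i$ by Definition~\ref{def:ACR}.

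Since $\kappa^* \in \mathbb{R}^r_{>0}$ was arbitrary, this shows $(G, \kappa^*)$ has ACR in $X_i$ for every choice of positive rate constants, which is exactly the definition of $G$ having unconditional ACR in $X_i$ (Definition~\ref{def:ACR-conditional}(ii)). Note that the argument makes no use of positivity of $\alpha$ evaluated at $\kappa^*$; the definition of ACR only requires the $x_i$-value to be constant across positive steady states, and if $(G,\kappa^*)$ has no positive steady states then it has vacuous ACR and there is nothing to check. One could optionally remark that $\alpha(\kappa^*)$ is automatically nonnegative (indeed positive when a positive steady state exists), since it equals $x_i^* > 0$, but this is not needed for the conclusion.

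The only subtlety worth flagging — and the reason the proposition is stated as a one-way implication rather than an equivalence, unlike Proposition~\ref{prop:ideal} — is that the converse can fail: a network may have unconditional ACR in $X_i$ with an ACR-value that depends on $\kappa$ in a way not expressible as a single polynomial in the $\kappa_{ij}$ (for instance, the value $\sqrt{\kappa_1/\kappa_2}$ in Example~\ref{ex:ideals-network}, where it is the \emph{square} $\kappa_1 - \kappa_2 x_A^2$ that lies in $I(\mathcal{L})$, not $x_A$ minus a polynomial in $\kappa$). So there is no real obstacle in the proof itself — it is essentially an unwinding of the two definitions together with the defining property of $I(\mathcal{L})$ — but I would take care to phrase the statement and any accompanying remark so that this asymmetry with Proposition~\ref{prop:ideal} is transparent to the reader.
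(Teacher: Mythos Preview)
Your proof is correct and follows essentially the same approach as the paper: fix an arbitrary $\kappa^* \in \mathbb{R}^r_{>0}$, handle the vacuous case, and for any positive steady state $x^*$ use $(x^*,\kappa^*)\in\mathcal{L}$ together with $x_i-\alpha\in I(\mathcal{L})$ to conclude $x_i^*=\alpha(\kappa^*)$. Your additional remarks about the failure of the converse also align with what the paper notes immediately after the proposition (Example~\ref{ex:generalized-shinar--Feinberg-4}).
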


\begin{proof}
Assume 
there exists $\alpha \in \mathbb{R}[\kappa_{ij} \mid i \to j \mathrm{~is~a~reaction~of~} G ]$ 
such that $x_i - \alpha \in I(\mathcal{L})$. 
%where $I(\mathcal L)$ is the ideal of the positive steady-state/rate locus.  
Fix $\kappa^* \in \mathbb{R}^r_{>0}$, where $r$ is the number of reactions.  

If the mass-action system $(G,\kappa^*)$ has no positive steady states, then the system vacuously has ACR.  
Now assume that $(G,\kappa^*)$ has a positive steady state $x^*$.  
Then, $x_i - \alpha \in I(\mathcal{L})$ implies that $x^*_i = \alpha|_{\kappa  =  \kappa^*}$. 
Thus, $(G,\kappa^*)$ has ACR in $X_i$ with ACR-value $\alpha|_{\kappa  =  \kappa^*}$.  
\end{proof}

The converse of Proposition~\ref{prop:ideal-symbol} is false, as the following example shows.

\begin{example}[Example~\ref{ex:generalized-shinar--Feinberg-3} continued] \label{ex:generalized-shinar--Feinberg-4}
We noted earlier that the network $\{ B \to A, ~ 2A+B \to A+2B \}$ has ACR in $A$, but the ideal $I(\mathcal L)$, in~\eqref{eq:ideal-for-SF-net}, does {\em not} contain a polynomial of the form $x_A - \alpha$ with $\alpha \in \mathbb{R}[\kappa_1,\kappa_2]$.
\end{example}

\subsubsection{ACR and multistationarity} \label{sec:mss-acr}
The relationship between ACR and multistationarity was explored in prior works, in the context of randomly generated reaction networks~\cite{joshi2023prevalence} and for the purpose of determining the minimum numbers of species, complexes, and reactions needed for ACR and multistationarity to coexist~\cite{kaihnsa2023absolute}.
Here, we record two facts.  
%----------------------
% LEMMA
%----------------------
%\begin{remark} %\label{lem:ACR-MSS}
First, it is immediate from definitions that if a reaction network $G$ has ACR in \emph{every} species, then $G$ is \emph{not} multistationary. Second, it is also straightforward to check that, for a network $G$ with $n$ species, if the stoichiometric subspace is  $\mathbb{R}^n$, then $G$ has ACR in \emph{every} species $X_i$ if and only if  $G$ is \emph{not} multistationary. 

\subsubsection{Steady-state parameterizations}

For many networks arising in biology, the positive steady-state locus can be parameterized~\cite{perspective, feliu-wiuf-ptm, J-M-P, messi, TG}.  
More precisely, we say that a mass-action system $(G,\kappa)$ with %$m$ reactions and 
$n$ species 
{\em admits a positive steady-state parameterization} if there exists a function:
	\begin{align} \label{eq:param}
	\phi:~	\mathbb{R}^T_{>0} %\times \mathbb{R}^m_{>0}
				 ~ &\to ~ \mathbb{R}^n_{>0} %\\
%			u %(u, \kappa) 
%					~& \mapsto~ \phi(u) %,\kappa)
%			\notag
	\end{align}
(for some $T>0$) such that the image %, $\phi(\mathbb{R}^T_{>0} \times \mathbb{R}^m_{>0})$,
 equals the set of all positive steady states of $(G,\kappa)$.
When a mass-action system $(G,\kappa)$ admits a positive steady-state parameterization, assessing ACR is easy: $(G,\kappa)$ has ACR in species $X_i$ if and only if 
$\phi_i$ is constant. 

%\begin{example} \label{ex:MM}
%Consider the following network, the reversible Michaelis-Menten mechanism:
%\[
%G~=~\left\{ 
%	S_{0}+ K  \overset{\kappa_1}{\underset{\kappa_2}\rla}
%	S_{0}K \overset{\kappa_3}{\underset{\kappa_4}\rla} S_{1}+K 
% \right\}~.
%\]
%Let $x_1,x_2,x_3,x_4$ denote the species concentrations for $S_0, K, S_0K,  S_1$, respectively.  
%It is straightforward to check that (for any choice of $\kappa\in \mathbb{R}^4_{>0}$)
% the following is a positive steady-state parameterization (see \cite{perspective} for details):
%\[
%  \begin{aligned} % \label{eq:param}
%  \phi:~ \mathbb{R}^2_{>0} ~ &\to~ \mathbb{R}^{4}_{>0} \\
%    (x_2, x_4) ~&\mapsto ~ \left( \frac{\kappa_{2} \kappa_{4} }{\kappa_{1} \kappa_{3}} x_{4},~ x_2, ~
%    \frac{\kappa_{4} }{\kappa_{3}} x_{2} x_{4},~ x_4\right)~.
%  \end{aligned}
%\]  
%None of the coordinates $\phi_i$ is constant (each depends on at least one $x_j$) and so $G$ does not have ACR in any species. 
%
%\end{example}

\subsubsection{One-species networks and systems} \label{sec:1-species-algo}
For networks having only one species, ACR has already been classified: Such a network has ACR if and only if it is non-multistationary, which in turn is equivalent to a combinatorial criterion on its ``arrow diagram''~\cite[Proposition~4.2]{meshkat-shiu-torres}.  

If a one-species system has fixed rational-number rate constants, ACR means that the (univariate polynomial) right-hand side of the (unique) ODE has at most one positive root.  This property can be checked using the classical theory of Sturm sequences.

\section{Ideal theoretic approaches to detecting ACR}
\label{sec:comp_considerations}

In this section we recall several facts about ideals in polynomial rings, which we use later (Section~\ref{sec:ideals-in-poly-rings}). Section~\ref{sec:how-not-to-decide-ACR}
serves as a warning, by showing that various sufficient conditions for ACR are not necessary.
We then turn our attention to the real radical ideal and discuss the assumption of rational-number rate constants, as well as other computational considerations. We end this section by 
focusing on a new ideal, which we call the positive-restriction ideal (Section~\ref{sec:using-j-ideal}), and by computing decompositions of its radical (Section~\ref{sec:ideal_decomp}) to assess ACR.

\subsection{Ideals in polynomial rings} \label{sec:ideals-in-poly-rings}
We start this section by recalling several concepts related to general ideals (in polynomial rings): saturations, zero-divisors, real varieties, and real radicals.  We use these ideas in later sections (for steady-state ideals and other ideals).  
%we give the following definition pertaining to general ideals (which we will apply to steady-state ideals):

\begin{definition}\label{def:colon}
 Let %$h$ be a polynomial in 
 $h \in \mathbb{F}[x_1, x_2, \dots, x_n]$, where $\mathbb{F}$ is a characteristic-zero field, 
 and let $J$ be an ideal of  $\mathbb{F}[x_1, x_2, \dots, x_n]$. 
 The \emph{ideal quotient} of $J$ with respect to $h$ is the ideal defined by
 \[(J\colon h) := \{g\in \mathbb{F}[x_1, x_2, \dots, x_n] \mid hg \in J\},\]
 and the \emph{saturation} of $J$ with respect to $h$ is the ideal defined by 
 % $(J:h^\infty)$ defined by
 $$(J:h^\infty) ~:=~ \{ g \in \mathbb{F}[x_1, x_2, \dots, x_n] \mid h^kg \in J \; \text{for some} \; k \in\mathbb{Z}_{\geq 0}  \}.$$
\end{definition}

\begin{remark} \label{rmk:saturation}
For the saturations we consider in this work, the polynomial $h$ is the monomial $\mathfrak{m}$ formed by the product of all the variables $x_i$ (that is, $\mathfrak{m} = x_1 x_2 \cdots x_n$). 
Roughly speaking, $(J:\mathfrak{m}^\infty)$ allows us to ``divide'' the polynomials in $J$ by monomials in the variables. 
Hence, the zeros of $J$ with nonzero coordinates are the zeros of $(J:\mathfrak{m}^\infty)$ with nonzero coordinates. 
Moreover, computing saturation is a standard task accomplished via Gr\"obner bases, which are implemented 
for instance 
in most computer algebra systems.
\end{remark}

Next, we turn to zero-divisors.

\begin{definition} \label{def:zero-divisor}
Let $I$ be an ideal in a ring $R$.
%polynomial ring $K[x_1,x_2,\dots, x_n]$. 
 If $fg \in I$, where $f,g \in R \smallsetminus I$, then $f$ and $g$ are {\em zero-divisors} of $I$.
\end{definition}

\begin{remark} \label{rmk:zero-divisor}
In Definition~\ref{def:zero-divisor}, we use the term ``zero-divisor'' because $f$ is a zero-divisor of $I$ if and only if $f+I$ is a zero-divisor of the quotient ring $R/I$.
\end{remark}

\begin{remark}[Checking zero-divisors] \label{rmk:check-zero-divisor}
A polynomial $f$ is a zero-divisor of an ideal $I$ in a polynomial ring, if and only if the  ideal quotient $(I: f)$ is strictly larger than $I$.  Checking this is easy using computer algebra software (as mentioned in Remark~\ref{rmk:saturation}, Gr\"obner bases can be used to compute saturations -- and the same is true for testing equality of ideals).
\end{remark}

Finally, we consider real varieties and real radicals.  

\begin{definition} \label{def:real-variety}
Let $J$ be an ideal in the polynomial ring $\mathbb{Q}[x_1,x_2,\dots, x_n]$. 
%polynomial ring $K[x_1,x_2,\dots, x_n]$. 
%\begin{itemize}
%\item 
The {\em real radical} of $J$ is the following ideal:
	\begin{align*}
		\sqrt[{\mathbb R}]{J} ~:=~ 
			\left\lbrace g\in \Q[x_1,x_2,\dots, x_n] ~|~  
        \exists m,\ell\in \mathbb{Z}_{\geq 0}, ~\exists h_1,\dots, h_\ell \in \Q[x_1, x_2, \dots, x_n], \;  g^{2m}+\sum_{i=1}^\ell h_i^2 \in J \right\rbrace~.
	\end{align*}
%\item 
The {\em real variety} of $J$ is $
		V_{\mathbb R} (J) := \left\{ z \in \mathbb{R}^n \mid f(z)=0 \text{ for all } f \in J \right\}$.
%\end{itemize}
\end{definition}

The real radical and real variety are related by the next result, the Real Nullstellensatz (specifically, the version in which the coefficient field is $\mathbb Q$).  
This result is well known; for instance, it follows from~\cite[Theorem 1.11]{lombardi-perrucci-roy}\footnote{A proof of Proposition~\ref{prop:real-nullstellensatz} is in Appendix~\ref{sec:appendix-radical}.}. 
% (see e.g. \cite[\S 4.1]{Bochnak1987Geometrie} for the real-closed-field version).   
Neuhaus~\cite{Neuhaus1998} credits the result to 
Dubois~\cite{dubois-1969}, Krivine~\cite{krivine-1964}, and Risler~\cite{risler-1970}.

\begin{proposition}[Real Nullstellensatz] \label{prop:real-nullstellensatz}
If $J$ is an ideal in the polynomial ring $\mathbb{Q}[x_1,x_2,\dots, x_n]$, then the 
real radical of $J$ equals the vanishing ideal of the real variety of $J$:
	\begin{align*}
		\sqrt[{\mathbb R}]{J} ~:=~ 
				\left\{ 
				f \in \mathbb{Q}[x_1,x_2,\dots, x_n] \mid 
				f(z)=0
				\text{ for all } 
				z \in V_{\mathbb R} (J) 
				 \right\}~.
	\end{align*}
\end{proposition}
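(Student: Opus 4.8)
The plan is to deduce the Real Nullstellensatz in the rational-coefficient case from the cited statement \cite[Theorem 1.11]{lombardi-perrucci-roy}, and to note that the set equality we must prove is exactly the claim that the algebraically-defined real radical $\sqrt[{\mathbb R}]{J}$ coincides with the vanishing ideal $\mathcal{I}(V_{\mathbb R}(J))$. One inclusion is essentially immediate and worth dispatching first: if $g \in \sqrt[{\mathbb R}]{J}$, then there are $m, \ell \in \mathbb{Z}_{\geq 0}$ and $h_1,\dots,h_\ell \in \mathbb{Q}[x_1,\dots,x_n]$ with $g^{2m} + \sum_{i=1}^\ell h_i^2 \in J$. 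Evaluating at any real point $z \in V_{\mathbb R}(J)$ kills the left-hand side, so $g(z)^{2m} + \sum_{i=1}^\ell h_i(z)^2 = 0$; since this is a sum of squares of real numbers, each term vanishes, and in particular $g(z)^{2m} = 0$, hence $g(z) = 0$. Thus $g$ lies in the vanishing ideal of $V_{\mathbb R}(J)$, giving $\sqrt[{\mathbb R}]{J} \subseteq \mathcal{I}(V_{\mathbb R}(J))$.

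The reverse inclusion $\mathcal{I}(V_{\mathbb R}(J)) \subseteq \sqrt[{\mathbb R}]{J}$ is the substantive content, and this is where I would invoke the cited result. The plan is to state precisely which formulation of \cite[Theorem 1.11]{lombardi-perrucci-roy} is being used — the Real Nullstellensatz over an ordered field, here $\mathbb{Q}$ with its unique ordering — and observe that it asserts exactly that a polynomial vanishing on the real zero set of $J$ has a power expressible, modulo $J$, as a negative of a sum of squares of polynomials over $\mathbb{Q}$, which is the defining condition for membership in $\sqrt[{\mathbb R}]{J}$ as written in Definition~\ref{def:real-variety}. So the reverse inclusion is not so much proved as transcribed from the reference once the definitions are aligned. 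I would also remark, as the footnote promises, that a self-contained proof is deferred to Appendix~\ref{sec:appendix-radical}, so the body text only needs the reduction-to-citation argument.

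The main obstacle, such as it is, is bookkeeping about the coefficient field rather than any real mathematics: one must check that $\mathbb{Q}$ being merely an ordered field (not real closed) does not cause trouble. The subtlety is that the real variety $V_{\mathbb R}(J)$ is taken in $\mathbb{R}^n$, whereas the Positivstellensatz-type theorem is naturally stated with points in the real closure $\mathbb{R}_{\mathrm{alg}}^n$ (real algebraic numbers) or over a real closed field; one needs that a polynomial in $\mathbb{Q}[x_1,\dots,x_n]$ vanishing on all of $V_{\mathbb R}(J) \subseteq \mathbb{R}^n$ also vanishes on the real-algebraic points, which follows because real algebraic points are dense, or more directly because the relevant certificate existence is insensitive to the real closed extension by the Tarski transfer principle. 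In practice the cited theorem is stated so as to handle this, so I would simply cite it and, if needed, add a one-line remark pointing to the density/transfer argument. I expect this to be a short proof: two or three sentences for the easy inclusion, a sentence invoking the reference for the hard inclusion, and a pointer to the appendix.
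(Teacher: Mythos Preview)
Your approach is essentially the same as the paper's: the easy inclusion is handled identically, and the hard inclusion is obtained by invoking \cite[Theorem~1.11]{lombardi-perrucci-roy} with $\mathbf{K}=\mathbb{Q}$ and $\mathbf{R}=\mathbb{R}$. Two small remarks. First, the cited theorem produces a certificate $Q^{2e}+N\in J$ with $N\in\mathcal{N}(\varnothing)=\{\sum_i\omega_iV_i^2:\omega_i\in\mathbb{Q}_{>0}\}$, whereas Definition~\ref{def:real-variety} requires a pure sum of squares; the paper closes this gap by observing that every positive rational $p/q$ is itself a sum of squares of rationals (namely $pq$ copies of $(1/q)^2$), so $\mathcal{N}(\varnothing)$ already consists of sums of squares over~$\mathbb{Q}$. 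You should make this explicit. Second, your worry about $\mathbb{R}$ versus $\mathbb{R}_{\mathrm{alg}}$ is unnecessary: the cited theorem is stated for an arbitrary real closed extension $\mathbf{R}$ of $\mathbf{K}$, and $\mathbb{R}$ is such an extension of $\mathbb{Q}$, so no transfer or density argument is needed.
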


\subsection{Problems with standard approaches to deciding ACR ideal theoretically} \label{sec:how-not-to-decide-ACR}
In this subsection, we give several sufficient conditions for ACR in terms of the steady-state ideal (Proposition~\ref{prop:sufficient-conditions-ACR-using-steady-state-ideal}). 
We show how this result can be useful (Example~\ref{ex:JoshiNguyen}). 
 However, we also 
%Subsequently, we 
show 
through examples that none of the sufficient conditions is necessary for ACR, and additionally that some other approaches to deciding ACR are also incomplete.

\begin{prop}[Sufficient conditions for ACR] \label{prop:sufficient-conditions-ACR-using-steady-state-ideal}
Let $I$ be the steady-state ideal of a mass-action system $( G, \kappa)$ with  $n$ species.  Let $i \in \{1,2,\dots, n\}$.  
Assume one of the following holds:
	\begin{enumerate}
		\item there exists $\alpha \in \mathbb{R}_{>0}$ such that $x_i-\alpha $ is in $I$,
		\item there exists $\alpha \in \mathbb{R}_{>0}$ such that $x_i-\alpha$ is in the saturation $ I:(x_1 x_2 \dots x_n)^{\infty}$,
		\item there exists $g \in I \cap \mathbb{R}[x_i]$ such that $g$ has a unique positive root.
%		(in particular,  if $x_i-\alpha \in I$), 
	\end{enumerate}
Then 
 $( G, \kappa)$ 
 has ACR in species $X_i$.
\end{prop}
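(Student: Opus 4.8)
The plan is to prove each of the three sufficient conditions separately, in each case reducing to Proposition~\ref{prop:ideal}, which says that $(G,\kappa)$ has ACR in $X_i$ iff $x_i - \alpha \in \sqrt[>0]{I(G,\kappa)}$ for some $\alpha > 0$. So the whole proof amounts to showing that, under each hypothesis, such a polynomial $x_i - \alpha$ (with $\alpha > 0$) lands in $\sqrt[>0]{I(G,\kappa)}$. The common thread is the chain of containments from Remark~\ref{rmk:levels-invariants}, $I \subseteq \sqrt{I} \subseteq \sqrt[>0]{I}$, together with the observation (Remark~\ref{rmk:saturation}) that saturating by $\mathfrak{m} = x_1 x_2 \cdots x_n$ does not change the zero set on the positive orthant.

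First I would handle condition (1): if $x_i - \alpha \in I$ with $\alpha > 0$, then immediately $x_i - \alpha \in I \subseteq \sqrt[>0]{I(G,\kappa)}$ by~\eqref{eq:ideal-containments}, and Proposition~\ref{prop:ideal} gives ACR in $X_i$. For condition (2): suppose $x_i - \alpha \in (I : \mathfrak{m}^\infty)$ with $\alpha > 0$, so $\mathfrak{m}^k (x_i - \alpha) \in I$ for some $k$. At any positive steady state $x^*$ we have $f_\kappa(x^*) = 0$, hence every element of $I$ vanishes at $x^*$, so $\mathfrak{m}(x^*)^k (x_i^* - \alpha) = 0$; since $x^*$ is positive, $\mathfrak{m}(x^*) \neq 0$, forcing $x_i^* = \alpha$. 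Thus $x_i - \alpha$ vanishes on the positive steady-state locus, i.e.\ $x_i - \alpha \in \sqrt[>0]{I(G,\kappa)}$, and Proposition~\ref{prop:ideal} applies. (Alternatively, one can invoke directly the fact from Remark~\ref{rmk:saturation} that $I$ and $I : \mathfrak{m}^\infty$ have the same zeros with all coordinates nonzero, in particular the same positive steady states.)

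For condition (3): suppose $g \in I \cap \mathbb{R}[x_i]$ has a unique positive root, call it $\alpha$. Since $g \in I$, it vanishes at every positive steady state $x^*$, so $g(x_i^*) = 0$; but $x_i^* > 0$ and $\alpha$ is the only positive root of $g$, hence $x_i^* = \alpha$. Therefore every positive steady state has the same $x_i$-coordinate, which is exactly the definition of ACR in $X_i$ (Definition~\ref{def:ACR}). Note this argument also covers the vacuous case: if there are no positive steady states, the conclusion holds trivially.

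I do not anticipate a genuine obstacle here — each part is a short argument once Proposition~\ref{prop:ideal} and the ideal-containment chain are in hand. The only point requiring a little care is making sure the case of \emph{no} positive steady states is not swept under the rug: in conditions (1) and (2) this is absorbed into Proposition~\ref{prop:ideal}'s treatment of vacuous ACR, while in condition (3) the statement ``$g$ has a unique positive root'' combined with ``$g$ vanishes at every positive steady state'' is consistent with there being zero positive steady states, so the logic still goes through. I would write the three cases as a single short proof, perhaps merging (1) into (2) since (1) is the special case $k = 0$.
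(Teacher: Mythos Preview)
Your proposal is correct and matches the paper's proof almost exactly: the paper also merges (1) into (2) as the special case $m=0$, then for (2) takes a positive steady state $x^*$, evaluates $\mathfrak{m}^m(x_i-\alpha)$ at $x^*$, and uses $x_j^*>0$ to conclude $x_i^*=\alpha$; for (3) it evaluates $g$ at $x^*$ and uses uniqueness of the positive root. The only cosmetic difference is that the paper concludes ACR directly from the definition rather than routing through Proposition~\ref{prop:ideal}, but since that proposition is just a restatement of the definition, this is not a substantive distinction.
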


\begin{proof}
Condition (1) is a special case of (2) (and also (3)), so we begin with (2).  Assume that  
$x_i-\alpha$ is in $ I:(x_1 x_2 \dots x_n)$, for some $\alpha>0$.  This means that 
$(x_1 x_2 \dots x_n)^m (x_i-\alpha) \in I$ for some $m \geq 0$.  Now assume that $x^*$ is a positive steady state. Then $(x^*_1 x^*_2 \dots x^*_n)^m (x^*_i-\alpha) =0$ and so (as $x_j^*>0$ for all $j$) we have $x^*_i=\alpha$.  Hence, there is ACR in $X_i$ with ACR-value $\alpha$.  

For (3), assume that $g \in I \cap \mathbb{R}[x_i]$ has a unique positive root $\alpha$.  Let $x^*$ be a positive steady state.  Then $g \in I$ implies that $g(x^*_i)=0$ and so $x^*_i=\alpha$ since $\alpha$ is the only positive root of $g$.  We conclude that $(G,\kappa)$ has ACR in species $X_i$ (with ACR-value $\alpha$).
\end{proof}

The following example %from~\cite{joshi2023bifunctional} 
shows how to use condition (2) in Proposition~\ref{prop:sufficient-conditions-ACR-using-steady-state-ideal} to detect ACR.

\begin{example}%[Joshi and Nguyen network] 
\label{ex:JoshiNguyen}
Motivated by bifunctional enzymes in signal transduction networks, Joshi and Nguyen 
introduced the network below and showed it has ACR in $S_3$ \cite[Table~1, line 6]{joshi2023bifunctional}:  
%From Table 1 on page 10, it's the sixth example down (Permutation (1,2,3) to (2,3,1), which has ACR in S3 if C1 is the bifunctional enzyme.
 \begin{equation}
\begin{split}
    S_1 + E
    \overset{\kappa_{1}}{\underset{\kappa_{2}}{\rightleftarrows}}
    C_1 
    \overset{\kappa_{3}}{\rightarrow}  
    S_2 + E
    \overset{\kappa_{4}}{\underset{\kappa_{5}}{\rightleftarrows}}
    C_2
    \overset{\kappa_{6}}{\rightarrow}  
    S_3 + E\\
    S_2 + C_1
    \overset{\kappa_{7}}{\underset{\kappa_{8}}{\rightleftarrows}}
    C_3 
    \overset{\kappa_{9}}{\rightarrow}  
    S_3 + C_1
    \overset{\kappa_{10}}{\underset{\kappa_{11}}{\rightleftarrows}}
    C_4
    \overset{\kappa_{12}}{\rightarrow}  
    S_1 + C_1 %\\
  \label{eq:example_Joshi}
\end{split}
\end{equation}
The unusual ordering of the species $S_2,S_3,S_1$ in the second line of~\eqref{eq:example_Joshi} is deliberate.

Joshi and Nguyen showed that the network~\eqref{eq:example_Joshi} has ACR in $S_3$, by appealing to a result on bifunctional enzymes in futile cycles of a very specific form~\cite{joshi2023bifunctional}.  As for general-purpose results on ACR in the literature, they do not apply.  For instance, the results of~\cite{shinar2010structural} % Shinar and Feinberg 
cannot be used, because the network has deficiency $2$.  Similarly, the results of~\cite{tonello2017network}
%Johnston and Tonello do not apply [CITE]. 
cannot be used, because the the network does not allow a `simple weakly reversible translation'.

We show next that Proposition~\ref{prop:sufficient-conditions-ACR-using-steady-state-ideal} applies to this network.  
Let $x_1, x_2, \dots, x_8$ denote the concentrations of the species
$S_1,S_2,S_3,E,C_1,C_2,C_3,C_4$, respectively.  In what follows, we use symbolic rate constants, although Proposition~\ref{prop:sufficient-conditions-ACR-using-steady-state-ideal} pertains to specialized values of rate constants, because the output we show below does not depend on the specific choice of the rate constants. 
%We denote by $x_1, x_2, \dots, x_8$ the concentrations of the species as follows:
%$$x_{S_1}=x_1, \; x_{S_2}=x_2, \; x_{S_3}=x_3, \; x_{E}=x_4,$$
%$$x_{C_1}=x_5, \; x_{C_2}=x_6, \; x_{C_3}=x_7, \; x_{C_4}=x_8.$$
The steady-state ideal $I$ is generated by the following eight polynomials:
% \begin{align*}
%     & -\kappa_{1}x_1x_4+\kappa_{2}x_5+\kappa_{12}x_8~, \\
%     & 	\kappa_{3}x_5+\kappa_{5}x_6-\kappa_{4}x_2x_4-\kappa_{7}x_2x_5+\kappa_{8}x_7~, \\
%     & \kappa_{6}x_6+\kappa_{9}x_7-\kappa_{10}x_3x_5+\kappa_{11}x_8~, \\
%     & -\kappa_{1}x_1x_4+(\kappa_{2}+\kappa_{3})x_5-\kappa_{4}x_2x_4+(\kappa_{5}+\kappa_{6})x_6~, \\
%     & \kappa_{1}x_1x_4-(\kappa_{2}+\kappa_{3})x_5-\kappa_{7}x_2x_5+(\kappa_{8}+\kappa_{9})x_7-\kappa_{10}x_3x_5+(\kappa_{11}+\kappa_{12})x_8~, \\
%     & \kappa_{4}x_2x_4-(\kappa_{5}+\kappa_{6})x_6~, \\
%     & \kappa_{7}x_2x_5-(\kappa_{8}+\kappa_{9})x_7~, \\  
%     & \kappa_{10}x_3x_5-(\kappa_{11}+\kappa_{12})x_8~.
% \end{align*}
\begin{multline*}
     -\kappa_{1}x_1x_4+\kappa_{2}x_5+\kappa_{12}x_8~,\, 
     	\kappa_{3}x_5+\kappa_{5}x_6-\kappa_{4}x_2x_4-\kappa_{7}x_2x_5+\kappa_{8}x_7~, \\  
    \kappa_{6}x_6+\kappa_{9}x_7-\kappa_{10}x_3x_5+\kappa_{11}x_8~,\, 
 -\kappa_{1}x_1x_4+(\kappa_{2}+\kappa_{3})x_5-\kappa_{4}x_2x_4+(\kappa_{5}+\kappa_{6})x_6~, \\
 \kappa_{1}x_1x_4-(\kappa_{2}+\kappa_{3})x_5-\kappa_{7}x_2x_5+(\kappa_{8}+\kappa_{9})x_7-\kappa_{10}x_3x_5+(\kappa_{11}+\kappa_{12})x_8~, \\
 \kappa_{4}x_2x_4-(\kappa_{5}+\kappa_{6})x_6~,\,
 \kappa_{7}x_2x_5-(\kappa_{8}+\kappa_{9})x_7~,\,  
 \kappa_{10}x_3x_5-(\kappa_{11}+\kappa_{12})x_8~.
\end{multline*}

For any choice of positive $\kappa_1,\dots, \kappa_{12}$, the saturation $ I:(x_1 x_2 \dotsm x_8)^{\infty}$ contains the polynomial $\kappa_{10}\kappa_{12}x_3-\kappa_3(\kappa_{11}+\kappa_{12})$.  Thus, by Proposition~\ref{prop:sufficient-conditions-ACR-using-steady-state-ideal}(2), there is ACR in $S_3$ with ACR-value $\kappa_3(\kappa_{11}+\kappa_{12})/\kappa_{10}\kappa_{12}$.
\end{example}

The next example shows that the sufficient conditions for ACR in Proposition~\ref{prop:sufficient-conditions-ACR-using-steady-state-ideal} are not necessary.  
(We remark that the authors -- and other researchers as well~\cite{neigenfind2011biochemical} -- once mistakenly believed condition~(1) in the proposition to be necessary for ACR!)

\begin{example}[Example~\ref{ex:ideals-network}, continued] \label{ex:wrong2} 
Recall that the network  $G=\{ B \overset{\kappa_1}\to A, ~ 2A+B \overset{\kappa_2}\to A+2B \}$ has ACR in species $A$.  
We consider fixed values $\kappa_i>0$ for the rate constants (the subsequent analysis does not depend on their precise values).  
The ODEs are ${dx_A}/{dt} = - {dx_B}/{dt} = x_B(\kappa_1 - \kappa_2 x_A^2)$.    
We see immediately that conditions~(1) and ~(3) do not hold, as the steady-state ideal contains no nonzero univariate polynomials.
After saturating the steady-state ideal by $x_A x_B$, we obtain $\langle \kappa_1 - \kappa_2 x_A^2 \rangle$. 
However, this ideal does not contain $x_A- \sqrt{\kappa_1/ \kappa_2}$, nor any other polynomial of the form $x_A - \alpha$ where $\alpha>0$.  
In other words, condition~(2) of  Proposition~\ref{prop:sufficient-conditions-ACR-using-steady-state-ideal} is violated. 

\end{example}

From Example \ref{ex:wrong2}, we might propose to factor $ \kappa_1 - \kappa_2 x_A^2$; indeed, $x_A- \sqrt{\kappa_1/ \kappa_2}$ is a zero-divisor of the steady-state ideal.    
One might therefore ask whether having a unique such zero-divisor is necessary or sufficient for ACR.  In other words, we consider the following condition (where the notation is as in Proposition~\ref{prop:sufficient-conditions-ACR-using-steady-state-ideal}):
\begin{align} \label{eq:wrong-condition}
	\textrm{there exists a unique } \alpha \in \mathbb{R}_{>0} \textrm{ such that } 
	x_i - \alpha \textrm{ is in } I \textrm{ or }
	x_i - \alpha \textrm{ is a zero-divisor of } I.
\end{align}
\noindent
This will motivate the definition of zero-divisor ACR appearing in Section~\ref{sec:specialized-GB}.
However, the next examples show that condition~\eqref{eq:wrong-condition} is neither necessary nor sufficient for ACR. 

\begin{example} \label{ex:zero-div-condition-not-necessary}
Consider the network
$G=\{  3A \overset{1}{\to} 4A,~ A+2B \overset{1}{\to} 2A+2B,~
	2A \overset{2}{\to} A,~ A+B \overset{4}{\to} B,~
	A \overset{5}{\to} 2A,~ 2A+B \overset{1}{\to} 2A+2B,~ 3B \overset{1}{\to} 4B,~
	A+B \overset{2}{\to} A,~ 2B \overset{4}{\to} B,~
	B \overset{5}{\to} 2B\}$.
The resulting mass-action ODE system is as follows:
\begin{align*}
  \frac{dx_A}{dt} & =~ x_A^3+x_Ax_B^2-2x_A^2-4x_Ax_B+5x_A ~=: f_A \\
  & =~ x_A[(x_A-1)^2+(x_B-2)^2]~,\\
  \frac{dx_B}{dt} & =~ x_A^2x_B+x_B^3-2x_Ax_B-4x_B^2+5x_B ~=: f_B \\
   & =~ x_B[(x_A-1)^2+(x_B-2)^2].
 \end{align*}
It follows that the only positive root of $f_A=f_B=0$ is $(x_A, x_B)=(1,2)$, so this system has ACR in both species.
However, $x_A-1$ is not a zero-divisor of the steady state ideal; this is easily checked computationally (as explained in Remark~\ref{rmk:check-zero-divisor}) or alternatively a direct proof can be given.  We conclude that condition~\eqref{eq:wrong-condition} is not necessary for ACR.
\end{example}

\begin{example} \label{ex:zero-div-condition-not-sufficient}
Consider the following network:
\[\{	2A  \overset{1}\to 3A + B, \quad 
	A + B \overset{1}{\underset{1}\rla} B, \quad 
	A \overset{1}\to 0 \overset{1/2}\longleftarrow 2B\}~.\]
The resulting mass-action ODEs admit 
the following factorization: 
$\frac{dx_A}{dt} = (x_A-1)(x_A-x_B)$ and $
	\frac{dx_B}{dt} =  (x_A-x_B)(x_A+x_B)$.
%the following factorization: 
%\begin{align*}
%	\frac{dx_A}{dt} ~&=~ (x_A-1)(x_A-x_B) \\
%	\frac{dx_B}{dt} ~&=~  (x_A-x_B)(x_A+x_B) ~.
%\end{align*}
It can be proved that the polynomial $(x_A-1)$ is the unique zero-divisor of the steady-state ideal $I$ of the form $x_A - \alpha$. 
Nonetheless, there is no ACR. 
Indeed, the set of steady states is given by a line, $x_A=x_B$, and so $(1,1)$ and $(2,2)$ are positive steady states that differ in both coordinates.  
Hence, condition~\eqref{eq:wrong-condition} is not sufficient for ACR.
\end{example}

The previous %three 
examples pertain to networks with complexes that are beyond bimolecular (like $3A+B$).  
% Indeed, Examples~\ref{ex:zero-div-condition-not-necessary} and~\ref{ex2:zero-div-condition-not-sufficient} involve degree-3 polynomials. 
Indeed, Example~\ref{ex:zero-div-condition-not-necessary} involves degree-3 polynomials. 
On the other hand, Example~\ref{ex:zero-div-condition-not-sufficient} features degree-2 polynomials only (the trimolecular complex, $3A+B$, is not a reactant).  
A natural question is whether there are examples with lower %degree or 
molecularity:
\begin{question} \label{q:lower-degree-condition}
For networks that are at-most-bimolecular, is condition~\eqref{eq:wrong-condition} necessary for ACR?  Is it sufficient?
\end{question}

\noindent
Biochemical networks arising in applications are typically at-most-bimolecular, so it would be useful to have an answer to Question~\ref{q:lower-degree-condition}.
%so answers to Question~\ref{q:lower-degree-condition} could be of great practical importance.

An alternative approach to deciding ACR, is to try sampling steady states from a few compatibility classes, but this too might not suffice.
\begin{example} \label{ex:wrong4}
Consider the following network:
 \begin{equation} \label{eq:network-2-components}
\{ 	2A+ B  \overset{1}\to 3A, \quad 
	A+B  \overset{3}\to 2B, \quad 
	B \overset{2}\to A\}~.
 \end{equation}
The ODEs factor as shown here:
\begin{align*}
	\frac{dx_A}{dt} ~&=~ x_B (x_A-1)(x_A-2) \\
	\frac{dx_B}{dt} ~&=~ - x_B (x_A-1)(x_A-2) ~.
\end{align*}
Hence, the set of steady states has two components, arising from $x_A=1$ and $x_A=2$.  These are depicted by  vertical, red lines below, and the stoichiometric compatibility classes are the dashed, blue lines:
\begin{center}
%\begin{figure}[ht]
\begin{tikzpicture}[scale = 0.7]
\begin{axis}[xmin=0,xmax=3,ymin=0,ymax=3]
\addplot[color=red] coordinates {
	(1, 0)
	(1,3)};
\addplot[color=red] coordinates {
	(2, 0)
	(2,3)};
%\addplot +[mark=none,color=red] coordinates {(1,0) (1, 5.5)};
%\addplot +[mark=none,color=blue,dashed] coordinates {(0,0.3) (0.3,0)};
%\addplot +[mark=none,color=blue,dashed] coordinates {(0,0.6) (0.6,0)};
\addplot +[mark=none,color=blue,dashed] coordinates {(0,0.9) (0.9,0)};
\addplot +[mark=none,color=blue,dashed] coordinates {(0,1.3) (1.3,0)};
\addplot +[mark=none,color=blue,dashed] coordinates {(0,1.6) (1.6,0)};
\addplot +[mark=none,color=blue,dashed] coordinates {(0,1.9) (1.9,0)};
\addplot +[mark=none,color=blue,dashed] coordinates {(0,2.3) (2.3,0)};
\addplot +[mark=none,color=blue,dashed] coordinates {(0,2.6) (2.6,0)};
\addplot +[mark=none,color=blue,dashed] coordinates {(0,2.9) (2.9,0)};
\end{axis}
\end{tikzpicture}
%\caption{For the system arising from~\eqref{eq:network-2-components}, the positive steady-state locus consists of two components (the vertical, red lines), so sampling from some of the compatibility classes (blue dashed lines) could erroneously lead to concluding that there is ACR.
%\label{fig:2-components}}
%\end{figure}
\end{center}
%(see Figure~\ref{fig:2-components}). 
If we sampled a few stoichiometric compatibility classes, and checked whether the steady-state value of $x_A$ is always the same, we could be unlucky -- and conclude erroneously that $A$ has ACR with ACR-value $1$. 
 \end{example}

As before, Example~\ref{ex:wrong4} involves a degree-3 polynomial.  
So, we again would like to know whether there are similar examples involving lower-degree polynomials.

We end this section with an example of a network arising in biology that exhibits ACR.

\begin{example}[Shinar and Feinberg network] \label{ex:ShiFein}
The following network was analyzed (and was shown to have ACR) by Shinar and Feinberg  \cite[Figure~2(B)]{shinar2010structural} and has been studied by many others \cite{A-E-J, dexter2013dimerization, Enciso2016, karp2012complex, messi}:
 \begin{equation}
\begin{split}
\{
    X 
    \overset{\kappa_{1}}{\underset{\kappa_{2}}{\rightleftarrows}}
    XT
    \overset{\kappa_{3}}{\rightarrow}  
    X_p,
    \quad %\quad 
    %\\
    %
    X_p + Y
    \overset{\kappa_{4}}{\underset{\kappa_{5}}{\rightleftarrows}}
    X_pY
    \overset{\kappa_{6}}{\rightarrow}
    X + Y_p,
    \quad %\quad 
    %\\
    %
    XT + Y_p
    \overset{\kappa_{7}}{\underset{\kappa_{8}}{\rightleftarrows}}
    XTY_p
    \overset{\kappa_{9}}{\rightarrow}
    XT + Y %\\
 \}   ~.
  \label{eq:example_Feinberg_multisite}
\end{split}
\end{equation}
%{say what these species mean chemically.}
We denote by $x_1, x_2,\dots, x_7$ the concentrations of the species as follows:
$$x_{X}=x_1, \; x_{XT}=x_2, \; x_{X_p}=x_3, \; 
x_{Y}=x_4, \; x_{Y_p}=x_5, \; x_{X_pY}=x_6, \; x_{XTY_p}=x_7~.$$
The steady state ideal $I$ is generated by 
$-\kappa_{1}x_1+\kappa_{2}x_2+\kappa_{6}x_6,~ \kappa_{3}x_2-\kappa_{4}x_3x_4+\kappa_{5}x_6,~ \kappa_{6}x_6-\kappa_{7}x_2x_5+\kappa_{8}x_7,~ \kappa_{4}x_3x_4-(\kappa_{5}+\kappa_{6})x_6,~ \kappa_{7}x_2x_5-(\kappa_{8}+\kappa_{9})x_7$. 
%Consider, for instance $i=5$.  
For any choice of positive rate constants $\kappa_1, \kappa_2, \dots, \kappa_9$,
the reduced Gr\"{o}bner basis of $I$ with respect to the lexicographic order 
$x_1 >x_2 > x_3 > x_4 > x_6 > x_7 > x_5$ is 
\begin{align*}
 {\mathcal G} ~&=~ 
    \{\kappa_7\kappa_9x_5x_7-\kappa_3(\kappa_8+\kappa_9)x_7,~
    \kappa_6x_6-\kappa_9x_7,~
    \kappa_4\kappa_6x_3x_4-(\kappa_5+\kappa_6)\kappa_9x_7,~ 
    \\
    & \quad \quad \quad \quad 
    \kappa_3x_2-\kappa_9x_7,~
    \kappa_1\kappa_3x_1-(\kappa_2+\kappa_3)\kappa_9x_7 \}~.
\end{align*}
% \begin{align*}
%  {\mathcal G} ~&=~ 
%     \{\kappa_7\kappa_9x_5x_7-(\kappa_3\kappa_8+\kappa_3\kappa_9)x_7,~
%     \kappa_6x_6-\kappa_9x_7,~
%     \kappa_4x_3x_4-\kappa_5x_6-\kappa_9x_7,~ 
%     \\
%     & \quad \quad \quad \quad 
%     \kappa_3x_2-\kappa_9x_7,~
%     \kappa_1x_1-\kappa_2x_2-\kappa_9x_7\}~.
% \end{align*}

\noindent
Here, ACR in $X_5$ can be seen from the first Gr\"obner basis element, $g_1:= \kappa_7\kappa_9x_5x_7-\kappa_3(\kappa_8+\kappa_9)x_7$.  
Indeed, $g_1/x_7= \kappa_7\kappa_9x_5-\kappa_3(\kappa_8+\kappa_9) $ is a linear polynomial in $x_5$ and so, for all choices of rate constants, this network exhibits ACR in the fifth species, namely, $Y_p$.
\end{example}

\begin{remark} \label{rem:appendix-complex-number}
    One message of this subsection is that, in general, ACR is not readily detected from the steady-state ideal.  If, however, we allow for a more general form of ACR (complex-number ACR), then detection from the steady-state ideal is possible.  See Appendix~\ref{sec:cpx-acr-appendix}.
\end{remark}

\subsection{The real radical}

In this section, we consider the computational aspects of the problem of deciding whether a system $(G,\kappa)$ has non-vacuous ACR. 
We first explain why, in many of our results, we assume that the rate constants are rational (Section~\ref{sec:assm-rational}).  
Next, we show that, in some cases, we can use a new ideal, which we call the positive-restriction ideal 
%J-ideal 
(Section~\ref{sec:using-j-ideal}), and also decompositions of its radical ideal (Section~\ref{sec:ideal_decomp}) to assess ACR.
%there are challenges to using that approach in practice

\subsubsection{On the assumption of rational-number rate constants} \label{sec:assm-rational}
We have seen that it is sometimes convenient to assume that the rate constants are rational numbers 
%(that is, $\kappa \in \mathbb{Q}_{>0}^r$, where $r$ is the number of reactions).  
% (as in Proposition~\ref{prop:1-species-algo}). 
% (as in Remark~\ref{rmk:1-species-algo}).
(as in \S~\ref{sec:1-species-algo}).
We make this assumption again for some results in this section.  
%we assume that the rate constants are rational numbers (that is, $\kappa \in \mathbb{Q}_{>0}^r$, where $r$ is the number of reactions).  
%The reason for this is to avoid issues concerning whether and how we can compute over the real numbers.  

The reason for this assumption, in the previous section, was to avoid issues concerning whether and how we can compute over the real numbers.  
Another reason comes from the fact that, in real-life applications, we are interested in ACR (and other properties) that persist when rate constants are perturbed. In such situations, approximating real-number rate constants by rational numbers would suffice.

Nevertheless, in principle, %at least mathematically, 
the assumption of rational rate constants may be problematic. 
There are two main difficulties.  The first is the issue of approximating the (actual) real-number coefficients by rational ones. The second arises when the ACR-value is irrational, and then this value either is approximated (and so propagates numerical errors) or is described symbolically (but possibly in an uninformative way). 
%without enough information. 
Such difficulties are shown in the next example. 
Precisely, we face the problem of dealing numerically with irrational-number roots.

% Next, we build upon Example~\ref{ex:Q}:
\begin{example}\label{ex:Q_2}
 Consider the network $G=\{ 0 \overset{2} \to A,~ 2A \overset{1}\to 0,~ 2B \overset{1}\to 3B,~ A+B \overset{1}\to A,~ B\overset{1.41}\to 2B\}$.
The resulting ODEs are as follows: 
 \begin{align*}
  \frac{dx_A}{dt} & =~ 2-x_A^2~,\\
  \frac{dx_B}{dt} & =~ x_B^2 - x_A x_B + 1.41 x_B  ~=~ x_B(x_B-(x_A-1.41))~.
 \end{align*}
The elimination ideal $I \cap \Q [x_A]$, where $I$ is the steady-state ideal, is generated by $2-x_A^2$.  Hence, this system has ACR with ACR-value $\sqrt{2}$.  However, numerical problems may arise when checking whether ACR is vacuous or non-vacuous.  
Specifically, when we try to find the possible steady-state values of $x_B$, we have the following issue:
% as in  Example~\ref{ex:Q}, 
if we approximate $x_A= \sqrt{2}$ by any rational number less than or equal to $1.41$, 
we do not obtain any positive steady states and so we would conclude that the system has vacuous ACR.
However, the system has non-vacuous ACR in both species, because the only positive steady state is $(x_A^*, x_B^*)=(\sqrt{2},\sqrt{2}-1.41)$.
\end{example}

\subsubsection{Assessing ACR using the positive-restriction ideal} \label{sec:using-j-ideal} 
As explained in the prior subsection, the results in this subsection pertain to systems with rational rate constants.  
Specifically, we show that, for such systems, ACR can be detected from the real radical of an ideal we call the positive-restriction ideal (Proposition~\ref{prop:ideal-extended}).
This ideal expands the steady-state ideal (and resides in a polynomial ring with additional variables $z_i$) so that the resulting real variety forms a ``cover'' of the original positive steady states (see Lemma~\ref{lem:j-ideal}). The purpose of this ideal, therefore, is to restrict our attention to the positive steady states (rather than including boundary ones), hence the name ``positive-restriction ideal''.

\begin{definition} \label{def:j-ideal}
Consider a mass-action system $(G,\kappa)$ with $n$ species and $r$ reactions, where  
$\kappa \in \mathbb{Q}_{>0}^r$.  
The {\em positive-restriction ideal}
of  $(G,\kappa)$ is the ideal in $\Q[x_1, x_2, \dots, x_n, z_1, z_2, \dots, z_n]$ generated by 
the right-hand sides of the ODEs in~\eqref{eq:ODE-mass-action}, $f_{\kappa}(x)_1, ~f_{\kappa}(x)_2, ~\dots, ~f_{\kappa}(x)_n$ (viewed here as polynomials in $\Q[x_1, x_2, \dots, x_n, z_1, z_2, \dots, z_n]$) 
%the steady-state ideal $I(G,\kappa)$ 
and the polynomials of the form $x_iz_i^2-1$:
 \begin{align}\label{eq:J_G_kappa}
  J(G,\kappa)~:=~  \langle f_{\kappa}(x)_1, ~f_{\kappa}(x)_2, ~\dots, ~f_{\kappa}(x)_n, ~x_1z_1^2-1,~ x_2z_2^2-1,~ \dots ,~ x_nz_n^2-1 \rangle~.
%   J(G,\kappa)~:=~  I(G,\kappa) + \langle x_1z_1^2-1,~ x_2z_2^2-1,~ \dots ,~ x_nz_n^2-1 \rangle~.
  \end{align}
\end{definition}

\begin{remark} \label{rem:rabino-trick}
    In Definition~\ref{def:j-ideal}, the use of the polynomials $x_iz_i^2-1$ is a version of the `Rabinowitsch trick' that is used in the proof of Hilbert's Nullstellensatz.
\end{remark}

\begin{lemma} \label{lem:j-ideal}
For a reaction system $(G,\kappa)$ with $n$ species and $\kappa \in \mathbb{Q}_{>0}^r$ (where $r$ is the number of reactions), 
the following projection (to the first $n$ coordinates) is a $2^n$-to-$1$ surjection from the real variety of the positive-restriction ideal 
%$J$-ideal 
to the positive steady-state locus:
	\begin{align*}
	V_{\mathbb{R}} (J(G,\kappa)) & ~\overset{\phi}\to~ V_{>0}(I(G,\kappa)) \\
						( x;z) & ~\mapsto ~ x~.
	\end{align*}
\end{lemma}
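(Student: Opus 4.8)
The plan is to unwind the definitions and verify the two claims—surjectivity and the $2^n$-to-$1$ fiber count—directly, the key observation being that the equation $x_iz_i^2-1=0$ forces $x_i>0$ and then admits exactly two solutions $z_i=\pm x_i^{-1/2}$.

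First I would check that $\phi$ is well-defined, i.e.\ that it actually lands in $V_{>0}(I(G,\kappa))$. Take $(x;z)\in V_{\mathbb R}(J(G,\kappa))$. Since $x_iz_i^2-1=0$ for each $i$, we get $x_iz_i^2=1$; as $z_i\in\mathbb R$ we have $z_i^2\geq 0$, and $z_i^2=0$ is impossible (it would give $0=1$), so $z_i^2>0$ and hence $x_i=1/z_i^2>0$. Thus $x\in\mathbb R^n_{>0}$. Moreover the generators $f_\kappa(x)_1,\dots,f_\kappa(x)_n$ of $J(G,\kappa)$ vanish at $(x;z)$, and since these polynomials do not involve the $z$-variables, they vanish at $x$; hence $f_\kappa(x)=0$, so $x$ is a positive steady state. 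This shows $\phi$ maps into $V_{>0}(I(G,\kappa))$.

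Next I would establish surjectivity together with the fiber count. Fix a positive steady state $x^*\in V_{>0}(I(G,\kappa))$, so $x_i^*>0$ for all $i$ and $f_\kappa(x^*)=0$. For each $i$, since $x_i^*>0$, the equation $x_i^*z_i^2-1=0$ has exactly two real solutions, namely $z_i=\pm (x_i^*)^{-1/2}$. Choosing a sign independently for each $i$ yields exactly $2^n$ vectors $z\in\mathbb R^n$ with $(x^*;z)\in V_{\mathbb R}(J(G,\kappa))$ (the $f_\kappa(x^*)_j=0$ conditions hold automatically since $x^*$ is a steady state). Conversely, any point of the fiber $\phi^{-1}(x^*)$ has this form. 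Hence each fiber has exactly $2^n$ points, which in particular shows $\phi$ is surjective.

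The routine part is the sign-counting; the only thing to be careful about—and I would flag this as the \emph{one} subtle point rather than a genuine obstacle—is the use of the Real Nullstellensatz (Proposition~\ref{prop:real-nullstellensatz}) implicit in working with $V_{\mathbb R}(J(G,\kappa))$: one must remember that this is the set of \emph{real} common zeros, so that $z_i^2\geq 0$ is available; over $\mathbb C$ the argument would fail since $z_i^2$ could be negative real or nonreal, and a positivity conclusion on $x_i$ would not follow, nor would the fiber have size exactly $2^n$. With the reals in hand, everything else is immediate from the definitions, so there is no real obstacle; the proof is essentially a bookkeeping exercise once the role of the Rabinowitsch-type variables $z_i$ (Remark~\ref{rem:rabino-trick}) is made explicit.
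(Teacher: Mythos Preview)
Your proof is correct and follows essentially the same approach as the paper: first check that $\phi$ lands in $V_{>0}(I(G,\kappa))$ via $x_i=1/z_i^2>0$, then verify surjectivity and the $2^n$ fiber count by noting that $z_i=\pm(x_i^*)^{-1/2}$ are the only real solutions of $x_i^* z_i^2=1$.

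One small correction to your closing commentary: the Real Nullstellensatz (Proposition~\ref{prop:real-nullstellensatz}) is \emph{not} used here, implicitly or otherwise. Working with $V_{\mathbb R}(J(G,\kappa))$ simply means taking real common zeros, which is a definition; the fact that $z_i^2\ge 0$ for real $z_i$ is elementary and has nothing to do with the Nullstellensatz, which concerns the relation between the real radical and the vanishing ideal. The paper invokes Proposition~\ref{prop:real-nullstellensatz} only later, in the proof of Proposition~\ref{prop:ideal-extended}, not in this lemma.
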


\begin{proof}
We first show that the image of $\phi$ is contained in $V_{>0}(I(G,\kappa)) $.  
Assume that $(x;z) \in V_{\mathbb{R}} (J(G,\kappa))$.  Then $x_iz_i^2-1= 0$ for all $i$, so $z_i \neq 0$ and $x_i>0$ for all $i$.  The vector $x$ is also a root of the polynomials in $I(G,\kappa)$, since the generators of $I(G,\kappa)$ also belong to  $J(G,\kappa)$.  We conclude that $x \in V_{>0}(I(G,\kappa))$.

Next, let $x \in V_{>0}(I(G,\kappa))$. 
For all $i$, let $z_i := \pm \sqrt{1/x_i}$.  It follows that $(x;z) \in V_{\mathbb{R}} (J(G,\kappa))$.  
It is straightfoward to see that no other choices of $z_i$ yield $(x;z) \in V_{\mathbb{R}} (J(G,\kappa))$.  
So, $\phi$ is surjective and is  $2^n$-to-$1$.
\end{proof}

\begin{remark}[Viewing the positive-restriction ideal as a steady-state ideal]  \label{rmk:positive_radical}
For a 
mass-action system $(G, \kappa)$ with $n$ species, 
the positive-restriction ideal 
%J-ideal
$J(G,\kappa)$
can be viewed as the steady-state ideal of the mass-action system obtained by adding $2n$ reactions to $(G,\kappa)$, namely, the reactions
%\begin{align*}
$ 0 \overset{1}\to Z_i$ and $
X_i + 2Z_i \overset{1} \to X_i + Z_i$
for all $i= 1,2,\dots, n$.
%\quad \text{for all } i= 1,2,\dots, n.
%\end{align*}
\end{remark}

Next, we use Lemma~\ref{lem:j-ideal} and the Real Nullstellensatz (Proposition~\ref{prop:real-nullstellensatz}) to extend Proposition~\ref{prop:ideal}, in the case when the rate constants -- and also the ACR-value -- are rational. 

\begin{prop}[Rational-number ACR and ideals] \label{prop:ideal-extended}
Consider a mass-action system $( G, \kappa)$ with  $\kappa \in \mathbb{Q}_{>0}^r$, 
%(where $r$ is the number of reactions), 
and let $\alpha \in \mathbb{Q}_{>0}$. The following are equivalent: 
\begin{enumerate}
\item $( G, \kappa)$ has ACR in species $X_i$ with ACR-value $\alpha$,
	\item $x_i - \alpha$ is in the ideal of the positive steady state locus of $(G,\kappa)$, 
	\item $x_i - \alpha$ %, for some $\alpha >0$, %$\alpha\in \mathbb{R}$, 
	is in 
	  $\sqrt[\R]{J(G,\kappa)}$ (the real radical of the positive-restriction ideal). %J-ideal). 
\end{enumerate}
\end{prop}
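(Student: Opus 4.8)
The plan is to prove $(1)\Leftrightarrow(2)$ directly, then $(2)\Leftrightarrow(3)$ using Lemma~\ref{lem:j-ideal} and the Real Nullstellensatz (Proposition~\ref{prop:real-nullstellensatz}). The first equivalence is essentially a restatement of Proposition~\ref{prop:ideal}, since $x_i - \alpha$ (with $\alpha \in \Q_{>0}$ fixed) lies in $\sqrt[>0]{I(G,\kappa)}$ precisely when $x_i^* = \alpha$ for every positive steady state $x^*$ — including vacuously when there are no positive steady states — which is exactly the statement that $(G,\kappa)$ has ACR in $X_i$ with ACR-value $\alpha$. (One should note that if there are no positive steady states, $\sqrt[>0]{I(G,\kappa)} = \langle 1 \rangle$ contains every polynomial, so condition (2) holds for every $\alpha$; this matches the convention that vacuous ACR has any value one likes, or one restricts attention to the non-vacuous case where the ACR-value is genuinely pinned down.)

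For $(2)\Leftrightarrow(3)$, the key is that both conditions are about a polynomial vanishing on a certain real set, so I would route everything through the real variety. By definition, $x_i - \alpha \in \sqrt[>0]{I(G,\kappa)}$ iff $x_i - \alpha$ vanishes on $V_{>0}(I(G,\kappa))$. By the Real Nullstellensatz applied in $\Q[x_1,\dots,x_n,z_1,\dots,z_n]$, we have $x_i - \alpha \in \sqrt[\R]{J(G,\kappa)}$ iff $x_i - \alpha$ (viewed as a polynomial not involving the $z_j$) vanishes on $V_{\R}(J(G,\kappa))$. Now Lemma~\ref{lem:j-ideal} gives a surjection $\phi: V_{\R}(J(G,\kappa)) \to V_{>0}(I(G,\kappa))$, $(x;z) \mapsto x$. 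Since $x_i - \alpha$ depends only on the $x$-coordinates, it vanishes on $V_{\R}(J(G,\kappa))$ iff it vanishes on the image $\phi(V_{\R}(J(G,\kappa))) = V_{>0}(I(G,\kappa))$. Chaining these equivalences gives $(2)\Leftrightarrow(3)$. The rationality hypothesis on $\kappa$ is what lets us place $J(G,\kappa)$ in a polynomial ring over $\Q$ so that the $\Q$-coefficient version of the Real Nullstellensatz (Proposition~\ref{prop:real-nullstellensatz}) applies; the rationality of $\alpha$ ensures $x_i - \alpha$ itself lies in that same ring.

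I do not anticipate a serious obstacle here — the proof is a short composition of results already assembled in the excerpt. The one point requiring a little care is the direction of the implication in the surjectivity argument: a polynomial in the $x$-variables alone vanishes on a set $S \subseteq \R^{2n}$ iff it vanishes on the projection of $S$ to the $x$-coordinates, and this uses only that $\phi$ is a (set) surjection onto $V_{>0}(I(G,\kappa))$, so the full $2^n$-to-$1$ strength of Lemma~\ref{lem:j-ideal} is not needed, just the surjectivity. A second small bookkeeping matter is confirming that the generators $x_i z_i^2 - 1$ of $J(G,\kappa)$ force every point of $V_{\R}(J(G,\kappa))$ to have strictly positive $x$-coordinates (so that the image really lands in $V_{>0}$ and not merely in $V_{\R}$ of the steady-state ideal) — but this is exactly what Lemma~\ref{lem:j-ideal} already records. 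So the write-up is: restate Proposition~\ref{prop:ideal} for $(1)\Leftrightarrow(2)$, then invoke Real Nullstellensatz plus Lemma~\ref{lem:j-ideal} for $(2)\Leftrightarrow(3)$.
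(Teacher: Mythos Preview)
Your proposal is correct and matches the paper's own proof essentially step for step: the paper also cites Proposition~\ref{prop:ideal} for $(1)\Leftrightarrow(2)$ and then combines Lemma~\ref{lem:j-ideal} with the Real Nullstellensatz (Proposition~\ref{prop:real-nullstellensatz}) to obtain the equivalence with~(3), noting exactly as you do that the rationality of $\alpha$ is what keeps $x_i-\alpha$ in the $\Q$-coefficient ring.
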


\begin{proof}
The equivalence of (1) and (2) is Proposition~\ref{prop:ideal}.
Now we prove that (1) and (3) are equivalent.  Let $\alpha >0$.  
By Lemma~\ref{lem:j-ideal}, every positive steady state $x^*$ satisfies $x_i^*= \alpha$ if and only if
 every $(x^*;z^*)$ in the real variety of the positive-restriction ideal %J-ideal 
 satisfies $x^*_i = \alpha$, which in turn 
 (by Proposition~\ref{prop:real-nullstellensatz}) 
 %, i.e., the Real Nullstellensatz) 
  is equivalent to the condition $(x_i - \alpha) \in \sqrt[\R]{J(G,\kappa)}$.  This final equivalence we just asserted uses the hypothesis that $\alpha \in \mathbb{Q}_{>0}$; indeed, if instead the ACR-value $\alpha$ were irrational, then its irreducible polynomial (over $\mathbb Q$), rather than the linear polynomial $x_i - \alpha$, would be in $\sqrt[\R]{J(G,\kappa)}$.
\end{proof}

\begin{remark} \label{rmk:acr-value-rational}
In Proposition~\ref{prop:ideal-extended}, it is assumed that not only are the rate constants rational, but also the ACR-value itself.  This is not always the case (for instance, we saw in Example~\ref{ex:ideals-network} a network in which the ACR-value has the form
$ \sqrt{\kappa_1 / \kappa_2}$).  Nevertheless, this property of rational-number ACR-value is common in the literature, and can be explained in part by the theory of ``robust ratios'' which, when applicable, gives a precise rational-number expression of the ACR-value in terms of the rate constants~\cite{tonello2017network}.  
\end{remark}

\begin{remark} \label{rmk:acr-value-irrational}
When the ACR-value is irrational, Proposition~\ref{prop:ideal-extended} does not apply.  Nevertheless, we can see from the proof of the proposition that sometimes ACR can be detected.  Indeed, as in the end of the proof, if a univariate (in $x_i$), irreducible polynomial $g$ appears in the real radical of the 
positive-restriction ideal, 
%J-ideal, 
and $g$ has a unique positive root $\alpha$, then there is ACR with ACR-value $\alpha$.
\end{remark}

Proposition~\ref{prop:ideal-extended} reveals an advantage of considering rational-number rate constants: {\em Detecting ACR reduces to being able to compute a real radical ideal -- as long as the ACR-value is also rational.  }

In fact, real radicals can be computed effectively to some extent 
\cite{Neuhaus1998,Spang2008}, although the complexity of such algorithms is high. 
Moreover, new methods for computing real radicals have been developed recently~\cite{BaldiMourrain2021, SAFEYELDIN2021259}.  We note, however, that when these algorithms encounter irrational roots $\alpha$ of polynomials, either a numerical approximation of $\alpha$ is returned, or, for symbolic algorithms, the irreducible polynomial of $\alpha$. 
Similar drawbacks were shown earlier in Example~\ref{ex:Q_2}.
%Examples~\ref{ex:Q} and~\ref{ex:Q_2}.

\subsubsection{ACR via ideal decomposition} \label{sec:ideal_decomp}
This subsection analyzes ACR by decomposing the positive-restriction ideal $J(G,\kappa)$ and then, using the notion of non-singular zeros, by restricting to certain components of $J(G, \kappa)$ (see Theorem~\ref{thm:P1P2-extended}). We also discuss issues involved in turning Theorem~\ref{thm:P1P2-extended} into an algorithm for detecting ACR (when the ACR-value is rational). 

We first introduce notation we use in this subsection. 
Given an ideal $P$ of $\Q[x_1,x_2,\dots,x_n]$, we let $\widetilde P$ denote the ideal of $\R[x_1,x_2,\dots,x_n]$ generated by the elements of $P$:
\begin{equation}\label{eq:PinR}
 \widetilde{P} ~:=~ P \R[x_1,x_2,\dots,x_n].
\end{equation}

We now state a lemma that points in the direction we are pursuing.

\begin{lemma}\label{lem:Q1}
If $J$ is an ideal of the ring $R:= \mathbb{Q}[x_1,x_2,\dots,x_n]$ that can be decomposed as $J=Q_1\cap Q_2$, 
where $Q_1$ and $Q_2$ are ideals in $R$ such that $V_\R(Q_2)=\varnothing$, then $V_\R(J)=V_\R(Q_1)$.
\end{lemma}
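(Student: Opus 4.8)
\textbf{Proof proposal for Lemma~\ref{lem:Q1}.}

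The plan is to prove the two inclusions $V_\R(J) \supseteq V_\R(Q_1)$ and $V_\R(J) \subseteq V_\R(Q_1)$ separately, using only the decomposition $J = Q_1 \cap Q_2$ and the hypothesis $V_\R(Q_2) = \varnothing$.

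First I would establish $V_\R(Q_1) \subseteq V_\R(J)$. Since $J = Q_1 \cap Q_2 \subseteq Q_1$, every polynomial in $J$ already lies in $Q_1$; hence any real point at which all elements of $Q_1$ vanish also annihilates all elements of $J$. In other words, the containment of ideals $J \subseteq Q_1$ reverses to give the containment of real varieties $V_\R(Q_1) \subseteq V_\R(J)$. By the identical argument, $J \subseteq Q_2$ gives $V_\R(Q_2) \subseteq V_\R(J)$, although this second observation is not needed.

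Next I would establish the reverse inclusion $V_\R(J) \subseteq V_\R(Q_1)$. Let $z \in V_\R(J) \subseteq \R^n$, so $f(z) = 0$ for all $f \in J = Q_1 \cap Q_2$. The key point is that over a field, the set-theoretic real variety of an intersection of ideals is the union of the real varieties: $V_\R(Q_1 \cap Q_2) = V_\R(Q_1) \cup V_\R(Q_2)$. The inclusion $V_\R(Q_1) \cup V_\R(Q_2) \subseteq V_\R(Q_1 \cap Q_2)$ is the argument of the previous paragraph applied to each factor; for the reverse, if $z \notin V_\R(Q_1)$ and $z \notin V_\R(Q_2)$, pick $g_1 \in Q_1$ with $g_1(z) \neq 0$ and $g_2 \in Q_2$ with $g_2(z) \neq 0$, and then $g_1 g_2 \in Q_1 \cap Q_2 = J$ satisfies $(g_1 g_2)(z) = g_1(z) g_2(z) \neq 0$, contradicting $z \in V_\R(J)$. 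Thus $V_\R(J) = V_\R(Q_1) \cup V_\R(Q_2)$, and since $V_\R(Q_2) = \varnothing$ by hypothesis, we conclude $V_\R(J) = V_\R(Q_1)$.

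There is no serious obstacle here: the statement is a direct consequence of the elementary fact that $V(\cdot)$ turns finite intersections of ideals into finite unions of varieties, combined with monotonicity of $V_\R$ under reverse inclusion. The only mild point worth flagging is that this fact is purely set-theoretic (it does not require the ideals to be radical and works verbatim for the \emph{real} zero set, since a product of real numbers is zero iff one of the factors is zero), so nothing about real-closedness or the Real Nullstellensatz is invoked at this stage.
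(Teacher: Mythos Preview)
Your proof is correct and follows essentially the same approach as the paper: both rely on the identity $V_\R(Q_1\cap Q_2)=V_\R(Q_1)\cup V_\R(Q_2)$, from which the conclusion is immediate given $V_\R(Q_2)=\varnothing$. The only difference is that the paper states this identity without justification, whereas you spell out the product argument for it.
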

\begin{proof}
 This result is immediate since $V_\R(J)=V_\R(Q_1)\cup V_\R(Q_2)$.
\end{proof}

Next, we recall a criterion that can help us isolate certain components of the positive-restriction ideal.  To state that result (Proposition~\ref{prop:nonsingular} below), we must first recall several definitions (see \cite{Basu2006Algorithms,real-algebraic-geometry-1998}). 
%With the extra notion of a non-singular point of an ideal, Bochnack et al. \cite{real-algebraic-geometry-1998} state a criterion that can help us isolate the components of the decomposition of the ideal $J(G,\kappa)$  that eventually lead us to describe the real zeros.  Before we state the result, we recall some definitions (see \cite{real-algebraic-geometry-1998,Basu2006Algorithms}). 
% A \emph{real closed field} $R$ is an ordered field whose positive cone is the set of squares $\{x^2 \mid x\in R\}$ and such that every polynomial in $R[X]$ of odd degree has a root in $R$. 
An ideal $I$ in a ring $R$ is {\em prime} if $xy\in I$ implies that $x\in I$ or $y\in I$. 
The \emph{dimension} of an ideal $I$ of $ \R[x_1,x_2,\dots,x_n]$ is the Krull dimension of the quotient ring $\R[x_1,x_2,\dots,x_n]/I$. If $I$ is generated by $n-d$ algebraically independent polynomials, then the dimension of $I$ is $d$ (see~\cite[Corollary~3.7]{Kunz1991IntroductionTC}). 

We now adapt to our context the definition of a non-singular zero, and subsequently recall a result that appears in the book of Bochnak, Coste, and 
Roy~\cite[Proposition~3.3.16]{real-algebraic-geometry-1998}.  

\begin{definition} \label{def:non-singular}
Let  $P = \langle g_1, g_2, \dots, g_t  \rangle$ be a prime ideal of $\R[x_1, x_2, \dots, x_n]$ of dimension~$d$. A point $\mathbf{x}\in V_\R(P)$ is a \emph{non-singular zero} 
%point 
of $P$ if $\mathrm{rank}\left(\left[\frac{\partial g_i}{\partial x_j}(\mathbf{x})\right]_{i,j} \right)=n-d$.
\end{definition}

\begin{proposition}\label{prop:nonsingular}
 Let $P$  be a prime ideal of $\R[x_1, x_2, \dots, x_n]$. 
  If $P$ has a non-singular zero, then $P=\sqrt[\R]{P}$.
\end{proposition}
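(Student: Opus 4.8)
The plan is to show that $\sqrt[\R]{P} \subseteq P$ (the reverse containment always holds), and for this I would use the Real Nullstellensatz (Proposition~\ref{prop:real-nullstellensatz}): since $\sqrt[\R]{P}$ is the vanishing ideal of $V_\R(P)$, it suffices to prove that every polynomial vanishing on $V_\R(P)$ already lies in $P$. Equivalently, if $P$ were strictly contained in $\sqrt[\R]{P}$, then $V_\R(P)$ would be contained in the vanishing locus of some $f \notin P$, so $V_\R(P)$ would sit inside a proper subvariety $V_\R(P) \cap V(f) \subsetneq V_\C(P)$ of smaller dimension. The strategy is to rule this out by exhibiting, near the non-singular zero $\mathbf{x}$, a piece of $V_\R(P)$ that is a smooth real manifold of the full dimension $d$ of $P$; a $d$-dimensional real manifold contained in $V_\C(P)$ cannot lie in a proper Zariski-closed subset of the irreducible variety $V_\C(P)$, which forces $f \in \sqrt{P} = P$ (using that $P$ is prime, hence radical).

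The key steps, in order, would be: (i) By hypothesis $P$ is prime of dimension $d$, so $V_\C(P)$ is irreducible of dimension $d$; let $\mathbf{x} \in V_\R(P)$ be a non-singular zero, so the Jacobian $\left[\frac{\partial g_i}{\partial x_j}(\mathbf{x})\right]$ has rank $n-d$. (ii) Apply the implicit function theorem (over $\R$) at $\mathbf{x}$ to $n-d$ of the generators $g_i$ whose gradients are linearly independent there: this shows that in a Euclidean neighborhood $U$ of $\mathbf{x}$, the common zero set of those $n-d$ polynomials is a real-analytic manifold $M$ of dimension $d$. One must check that, shrinking $U$ if necessary, $M$ actually coincides with $V_\R(P) \cap U$ — the remaining generators vanish on $M$ near $\mathbf{x}$ — and this is essentially the content of \cite[Proposition~3.3.16]{real-algebraic-geometry-1998}, whose conclusion I am invoking. (iii) Now take any $f \in \sqrt[\R]{P}$, i.e.\ $f$ vanishes on $V_\R(P)$; then $f$ vanishes on the $d$-dimensional manifold $M$. (iv) Since $M \subseteq V_\C(P)$ and $M$ is a real-analytic manifold of real dimension equal to $\dim_\C V_\C(P)$, the complex Zariski closure of $M$ is all of $V_\C(P)$ (an irreducible variety cannot contain a half-dimensional — here, full-dimensional — smooth real submanifold in a proper closed subset); hence $f$ vanishes on $V_\C(P)$, so $f \in \sqrt{P}$, and by primality $f \in P$. (v) Therefore $\sqrt[\R]{P} \subseteq P \subseteq \sqrt[\R]{P}$, giving equality.

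I expect the main obstacle to be step (iv): making rigorous the claim that a $f$ vanishing on the local real manifold $M$ must vanish on the whole complex variety $V_\C(P)$. The clean way to handle this is to note that $M$ has real dimension $d$ and lies in the irreducible complex variety $V_\C(P)$ of complex dimension $d$; if $f$ is not identically zero on $V_\C(P)$, then $V_\C(P) \cap V(f)$ is a proper subvariety of $V_\C(P)$ of complex dimension $\le d-1$, hence of real dimension $\le 2d-2 < 2d$, yet it contains $M$ — but one needs that $M$, being a smooth real $d$-manifold, is \emph{not} contained in any real-algebraic set of real dimension $< d$ that is cut out by polynomials, which follows because a nonzero polynomial cannot vanish on an open subset of a $d$-dimensional $C^1$-submanifold of $\R^n$ (by connectedness/analyticity of the restriction along coordinate charts). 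Since this is exactly the kind of statement packaged in the cited Bochnak–Coste–Roy proposition, in the write-up I would simply cite \cite[Proposition~3.3.16]{real-algebraic-geometry-1998} for the combined content of steps (ii)–(iv) and present steps (i) and (v) explicitly, remarking that $P$ prime implies $P = \sqrt{P}$ so that the reduction to the complex Nullstellensatz is legitimate.
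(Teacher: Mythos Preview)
The paper gives no proof of this proposition: it is stated as a direct quotation of \cite[Proposition~3.3.16]{real-algebraic-geometry-1998}. Your final plan --- to cite that same result for the substantive content of steps (ii)--(iv) and supply only the wrapper argument (i), (v) --- is therefore exactly what the paper does, and is fine.

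That said, your attempted self-contained justification of step (iv) does not work as written. The dimension comparison ``real dimension $\le 2d-2 < 2d$'' is irrelevant: $M$ has real dimension $d$, not $2d$, and for $d\ge 2$ one has $d \le 2d-2$, so a real $d$-manifold can certainly sit inside a set of real dimension $2d-2$ on dimension grounds alone. Your patch --- ``a nonzero polynomial cannot vanish on an open subset of a $d$-dimensional $C^1$-submanifold of $\R^n$'' --- is also false as stated (e.g.\ $x_{d+1}$ vanishes on $\R^d\times\{0\}\subset\R^n$). What is actually needed is that $M$ is Zariski-dense in $V_\C(P)$; the clean argument passes through the real-analytic local parametrization $x_{d+1},\dots,x_n$ as functions of $x_1,\dots,x_d$ (from the implicit function theorem) together with the fact that this same power series converges complex-analytically and parametrizes a neighborhood in $V_\C(P)$, so a polynomial vanishing on $M$ vanishes on an open (in the classical topology) subset of the irreducible complex variety. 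This is precisely the content packaged in the Bochnak--Coste--Roy proposition, so your decision to cite it rather than reprove it is the right call.

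One small technical point: you invoke Proposition~\ref{prop:real-nullstellensatz}, but that is stated in the paper only for ideals in $\Q[x_1,\dots,x_n]$, whereas here $P\subseteq\R[x_1,\dots,x_n]$. The Real Nullstellensatz over $\R$ is of course standard (and is the version used in \cite{real-algebraic-geometry-1998}), but if you want to stay within the paper's stated results you should cite the $\R$-coefficient version directly from Bochnak--Coste--Roy rather than Proposition~\ref{prop:real-nullstellensatz}.
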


From this proposition, we deduce the following result. % that may lead to a possible algorithm for detecting ACR.

\begin{theorem} \label{thm:P1P2-extended}
 Consider a mass-action system $(G,\kappa)$ with $\kappa\in\Q_{>0}^r$, and let $J(G,\kappa)$ be the positive-restriction ideal~\eqref{eq:J_G_kappa}.  Let $\alpha\in \Q_{>0}$.
 Assume the following:
	\begin{enumerate}
	\item \label{it:1} $J(G,\kappa) = Q_1\cap Q_2$ for some ideals $Q_1,Q_2$ of $ \Q[x_1, x_2, \dots, x_n,z_1,z_2,\dots,z_n]$ such that $V_\R(Q_2)=\varnothing$, and 
	\item  \label{it:2} $Q_1= \overset{\ell}{\underset{i=1}{\bigcap}}P_i$ 
	for some ideals $P_i$ of  $\Q[x_1, x_2, \dots, x_n,z_1,z_2,\dots,z_n]$ such that $V_{\R}(P_i)\neq \varnothing$  (for all $i=1, 2, \dots, \ell$).
\end{enumerate}
Let $\widetilde{P}_i$, for all $i=1, 2, \dots, \ell$, denote the corresponding ideals in $\R[x_1, x_2, \dots, x_n,z_1,z_2,\dots,z_n]$,
as in~\eqref{eq:PinR}. 
Assume two additional \textbf{main hypotheses}: 
\begin{enumerate}
  \setcounter{enumi}{2}
   \item \label{it:impossible} $\widetilde{P}_i$ is a prime ideal of $\R[x_1, x_2, \dots, x_n,z_1,z_2,\dots,z_n]$ (for all $i=1, \dots \ell$), and
 \item \label{it:4} $\widetilde{P}_i$ has a non-singular zero (for all $i=1,2,\dots, \ell$).
\end{enumerate}
Then $(G,\kappa)$ has ACR in some species $X_j$ with ACR-value $\alpha$ if and only if $x_j-\alpha \in Q_1$.
\end{theorem}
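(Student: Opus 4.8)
The plan is to chain together the equivalences established earlier (Propositions~\ref{prop:ideal} and~\ref{prop:ideal-extended}) with the two decomposition lemmas (Lemma~\ref{lem:Q1} and Proposition~\ref{prop:nonsingular}), reducing the statement ``$x_j - \alpha$ vanishes on all positive steady states'' to the membership ``$x_j - \alpha \in Q_1$''. By Proposition~\ref{prop:ideal-extended}, $(G,\kappa)$ has ACR in $X_j$ with ACR-value $\alpha$ if and only if $x_j - \alpha \in \sqrt[\R]{J(G,\kappa)}$. So the whole task is to show that, under hypotheses~\eqref{it:1}--\eqref{it:4}, we have $x_j - \alpha \in \sqrt[\R]{J(G,\kappa)}$ if and only if $x_j - \alpha \in Q_1$. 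I would prove both directions by passing to real varieties via the Real Nullstellensatz (Proposition~\ref{prop:real-nullstellensatz}).

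First I would establish the key identity $V_\R(J(G,\kappa)) = V_\R(Q_1) = \bigcup_{i=1}^\ell V_\R(\widetilde{P}_i)$, and show each $V_\R(\widetilde P_i)$ is nonempty. By hypothesis~\eqref{it:1} and Lemma~\ref{lem:Q1}, $V_\R(J(G,\kappa)) = V_\R(Q_1)$; by hypothesis~\eqref{it:2}, $V_\R(Q_1) = \bigcup_i V_\R(P_i)$, and since extending scalars from $\Q$ to $\R$ does not change the real zero set (any real point satisfying a generating set over $\Q$ satisfies the $\R$-span and conversely), $V_\R(P_i) = V_\R(\widetilde P_i)$, which is nonempty by~\eqref{it:2}. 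Next I would use hypotheses~\eqref{it:impossible} and~\eqref{it:4}: since each $\widetilde P_i$ is prime and has a non-singular zero, Proposition~\ref{prop:nonsingular} gives $\widetilde P_i = \sqrt[\R]{\widetilde P_i}$, so by the Real Nullstellensatz $\widetilde P_i$ is exactly the vanishing ideal of $V_\R(\widetilde P_i)$.

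Now for the ``if'' direction: if $x_j - \alpha \in Q_1$, then $x_j - \alpha$ vanishes on $V_\R(Q_1) = V_\R(J(G,\kappa))$, so $x_j - \alpha \in \sqrt[\R]{J(G,\kappa)}$ by the Real Nullstellensatz, and we are done. For the ``only if'' direction: suppose $(G,\kappa)$ has ACR in $X_j$ with value $\alpha$, so $x_j - \alpha \in \sqrt[\R]{J(G,\kappa)}$, i.e.\ $x_j - \alpha$ vanishes on $V_\R(J(G,\kappa)) = \bigcup_i V_\R(\widetilde P_i)$. Then for each $i$, $x_j - \alpha$ vanishes on $V_\R(\widetilde P_i)$, hence lies in the vanishing ideal of $V_\R(\widetilde P_i)$, which equals $\widetilde P_i$ by the previous paragraph. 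Thus $x_j - \alpha \in \bigcap_i \widetilde P_i = \widetilde{Q_1}$ (the last equality because intersection commutes with flat base change, or more elementarily because $V_\R$ and vanishing ideals behave well here). Finally I would descend from $\widetilde{Q_1}$ back to $Q_1$: since $x_j - \alpha$ has coefficients in $\Q$ and $\widetilde{Q_1} \cap \Q[x,z] = Q_1$ (faithful flatness of $\R$ over $\Q$, or: a $\Q$-polynomial lying in the $\R$-span of a $\Q$-generating set already lies in the $\Q$-span, seen by comparing coordinates in a $\Q$-basis of $\R$), we conclude $x_j - \alpha \in Q_1$.

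The main obstacle I anticipate is the descent step $\widetilde{Q_1} \cap \Q[x,z] = Q_1$ together with the claim $\bigcap_i \widetilde P_i = \widetilde{\bigcap_i P_i}$; these are standard facts about base change along the field extension $\Q \subseteq \R$, but they deserve a careful sentence since the whole point of hypothesis~\eqref{it:impossible} (primality of $\widetilde P_i$, not just of $P_i$) is precisely to make the Real Nullstellensatz argument run over $\R$ while keeping the final membership statement over $\Q$. A secondary subtlety is making sure the $2^n$-to-$1$ projection of Lemma~\ref{lem:j-ideal} is invoked correctly — but here it only enters implicitly through Proposition~\ref{prop:ideal-extended}, since ACR in $X_j$ concerns the $x_j$-coordinate, which is unaffected by the auxiliary $z$-variables, so no extra work is needed on that front.
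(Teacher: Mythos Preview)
Your proposal is correct and follows essentially the same route as the paper. The paper's proof establishes the slightly stronger ideal equality $Q_1 = \sqrt[\R]{J(G,\kappa)}$ directly (then invokes Proposition~\ref{prop:ideal-extended}), whereas you argue element-wise for the specific polynomial $x_j-\alpha$; and the paper descends each $\widetilde P_i$ to $P_i$ before intersecting, while you intersect first to $\widetilde{Q_1}$ and then descend --- but both arguments rest on the same ingredients (Lemma~\ref{lem:Q1}, Proposition~\ref{prop:nonsingular}, the Real Nullstellensatz, and the faithful flatness of $\Q\hookrightarrow\R$ for the contraction $\widetilde{P_i}\cap\Q[x,z]=P_i$), so the difference is purely organizational.
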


\begin{proof}
Let $J:=J(G,\kappa)$.
By Proposition~\ref{prop:ideal-extended}, it suffices to prove the equality $Q_1 = \sqrt[\R]{J} $. 

Hypothesis~\eqref{it:1} and
  Lemma~\ref{lem:Q1} together imply that $V_\R(J)=V_\R(Q_1)$.
  Hence, by the Real Nullstellensatz (Proposition~\ref{prop:real-nullstellensatz}), we obtain the first equality here:
  \begin{align} \label{eq:radical-ideals}  
  \sqrt[\R]{J}~=~
	\sqrt[\R]{Q_1}
	~=~
  	\overset{\ell}{\underset{i=1}{\bigcap}} \sqrt[\R]{P_i}~,
\end{align}
and the second equality follows from hypothesis~\eqref{it:2} and the fact that the real radical of an intersection is the intersection of the real radicals \cite[Lemma~2.2]{Neuhaus1998}.  
Next, equation~\eqref{eq:radical-ideals} and the containment $Q_1 \subseteq \sqrt[\R]{Q_1}$, together imply that $Q_1 \subseteq \sqrt[\R]{J}$.  Hence, using equation~\eqref{eq:radical-ideals}, it remains only to show the containment $\left( \overset{\ell}{\underset{i=1}{\bigcap}} \sqrt[\R]{P_i } \right) \subseteq  Q_1$.

To verify this containment, first note that $\sqrt[\R]{P_i}\subseteq \sqrt[\R]{\widetilde{P}_i}\cap \Q[x_1,x_2,\dots,x_n,z_1,z_2,\dots,z_n]$ (for all $i=1, 2, \dots, \ell$).  
Next, by hypotheses~(\ref{it:impossible}--\ref{it:4}) and Proposition~\ref{prop:nonsingular}, we have $\sqrt[\R]{\widetilde{P}_i}=\widetilde{P}_i$. Finally, $\widetilde{P_i}\cap\Q[x_1,x_2,\dots,x_n,z_1,z_2,\dots,z_n]=P_i$~\cite[Theorem~7.5]{Matsumura1989CommutativeRT}. 
We conclude that $\left( \overset{\ell}{\underset{i=1}{\bigcap}} \sqrt[\R]{P_i } \right)  \subseteq \overset{\ell}{\underset{i=1}{\bigcap}}P_i  =Q_1 $ (the equality is by hypothesis~(\ref{it:2})), so  the desired containment holds.
\end{proof}

The hypotheses of Theorem~\ref{thm:P1P2-extended} are, unfortunately, not easily checked.  Indeed, in general, we cannot detect whether an ideal is prime over $\mathbb{R}$ or whether it has a non-singular zero.  Nevertheless, when such computations are possible, the theorem tells us that if certain ideals $\widetilde{P}_i$ arising from a decomposition of $Q_1$ are  prime over $\mathbb{R}$ and admit non-singular zeros, ACR with rational ACR-value is characterized by the presence of a linear polynomial $x_i - \alpha$ in $Q_1$.  (See Steps 1 and 2 in the procedure below.)

But perhaps the most interesting case arises precisely when some of the ideals $\widetilde{P}_i$ \uline{lack} non-singular zeros.
In this case,  if $\widetilde{P}_i=\langle g_1, g_2, \dots,g_t\rangle$ has dimension $d$ and has no non-singular zeros, then the $(n-d)\times(n-d)$ minors of the Jacobian matrix of $g_1, g_2, \dots,g_t$ are polynomials in $x_1, x_2, \dots,x_n$ that vanish at all the real zeros of $J$. Accordingly, we can add such polynomials to $J$  -- without affecting its real variety -- and instead analyze this larger ideal.  This idea underlies the following procedure  (in particular, Steps 3 and 4).

\begin{proc} \label{prod:ideal-decomposition}
Detecting ACR by ideal decomposition
\end{proc}

\begin{itemize}
 \item[Input.] An ideal $Q_1$ of  $ \Q[x_1, x_2, \dots, x_n,z_1,z_2,\dots,z_n]$, such that $V_\R(Q_1)=V_\R(J)$, where $J:=J(G,\kappa)$ is the  positive-restriction ideal of a mass-action system $(G,\kappa)$.
 \item[Output.] ``ACR''  or  ``Inconclusive''.
 \end{itemize}

\begin{itemize}
 \item[Step~1.] 
 	Decompose $Q_1$ as $Q_1=\overset{\ell}{\underset{i=1}{\bigcap}}P_i$, where (for all $i=1, 2, \dots, \ell$) $P_i$ is an ideal of the ring $ \Q[x_1, x_2, \dots, x_n, z_1, z_2, \dots, z_n]$ and $\widetilde{P}_i$, as in~\eqref{eq:PinR}, is a prime ideal of the ring $\R[x_1,x_2,\dots,x_n,z_1,z_2,\dots,z_n]$.

 \item[Step~2.] If $\widetilde{P}_i$ has a non-singular zero for all $i=1,2,\dots, \ell$, then check whether $Q_1$ contains a polynomial of the form $x_i-\alpha$, where $\alpha \in \mathbb{Q}_{>0}$. If so, then output, ``ACR in $X_i$ with ACR-value $\alpha$''. 
 \item[Step~3.] Otherwise, for every $P_i=\langle g_1^i, g_2^i, \dots,g_{t_i}^i\rangle$ of dimension $d_i$ with no non-singular zeros, add all the size-$(2n-d_i)$ minors of the Jacobian matrix of $(g_1^i, g_2^i, \dots,g_{t_i}^i)$ to $Q_1$. Call this new ideal $Q_1^{(1)}$.
 \item[Step~4.] Repeat, \emph{if possible}, Steps~1--3 for the ideal $Q_1=Q_1^{(1)}$.
\end{itemize}

By repeating Steps~1--4, we can \emph{in theory} generate an ascending chain of ideals $Q_1\subseteq Q_1^{(1)}\subseteq Q_1^{(2)}\subseteq \dots \subseteq  Q_1^{(j)}\subseteq \cdots$ that stabilizes, since the ring $\Q[x_1,x_2,\dots,x_n,z_1,z_2,\dots,z_n]$ is Noetherian. 
The advantage of working with the ideal $Q_1^{(j)}$ instead of $J$ is that its dimension is smaller (and yet it has the same real variety).

For a better understanding of this theoretical procedure, and how to apply it to compute and detect ACR (when all steps can be done effectively), consider the following example:

\begin{example}\label{ex:singular}

Recall the system from Example~\ref{ex:zero-div-condition-not-necessary}, which generates the following ODEs:
\begin{align*}
 \frac{dx_A}{dt}  &= x_A[(x_A-1)^2+(x_B-2)^2]\\
 \frac{dx_B}{dt} &= x_B[(x_A-1)^2+(x_B-2)^2].
\end{align*}

It is easy to see in this example that the system shows ACR for both variables, since the only positive solution is $(x_A,x_B)=(1,2)$.
%$x_A=1$, $x_B=2$.
%
However, we will ignore this obvious fact and instead apply the above procedure. 
Unless otherwise noted, all computations below can be checked using a computer algebra system. 
%, such as {\tt Macaulay2} \cite{M2} and {\tt Singular} \cite{DGPS}.

The positive-restriction ideal, $J=\langle f_A,~ f_B,~ x_Az_A^2-1,~x_Bz_B^2-1 \rangle$ in $\mathbb{Q}[x_A, x_B, z_A, z_B]$, can be decomposed as follows:
$$J=Q_1 \cap Q_2 \cap Q_3 \cap Q_4~,$$
where 
\begin{align*}
	Q_1 &= \langle  (x_A-1)^2+(x_B-2)^2, ~x_Az_A^2-1,~x_Bz_B^2-1 \rangle, \\ 
	Q_2 &=\langle x_A, ~(x_A-1)^2+(x_B-2)^2,~ x_Az_A^2-1,~x_Bz_B^2-1 \rangle, \\
	Q_3 &= \langle x_B, ~(x_A-1)^2+(x_B-2)^2, ~x_Az_A^2-1,~x_Bz_B^2-1\rangle, \, {\rm and} \\
	Q_4 &= \langle  x_A,~x_B, ~x_Az_A^2-1,~x_Bz_B^2-1 \rangle~. 
\end{align*}

We easily see that  $V_{\C}(Q_2)=V_{\C}(Q_3)=V_{\C}(Q_4)=\varnothing$. 
So, Lemma~\ref{lem:Q1} implies that $V_{\R}({J})=V_{\R}(Q_1)$. This ideal $Q_1$ does not  contain a polynomial of the form $x_A-\alpha$ or $x_B-\alpha$ (where $\alpha>0$), so we apply Procedure~\ref{prod:ideal-decomposition} to $Q_1$.  
 
It is straightforward to check that $\widetilde Q_1$ is a prime ideal in $\R[x_A, x_B,z_A,z_B]$ with dimension~1. Hence, Step~1 is accomplished with $\ell=1$ and $P_1 = Q_1$. 

Next, for Step~2, we must check whether $\widetilde Q_1$ has a nonsingular zero.
Accordingly, we consider the matrix of partial derivatives of the generators of $\widetilde Q_1$ (namely, $g_1:=x_A^2-2x_A+x_B^2-4x_B+5$, $g_2:=x_Az_A^2-1$, $g_3:=x_Bz_B^2-1$): 
$$M~=~ 
	\left[\begin{array}{cccc}
          2x_A-2 & 2x_B-4 & 0 & 0\\
          z_A^2 & 0 & 2x_Az_A & 0\\
          0 & z_B^2 & 0 & 2x_Bz_B\\
         \end{array}\right].$$
A real zero of $\widetilde Q_1$ is non-singular if the matrix $M$ -- when specialized at that zero -- has rank~3, that is, one of the following  $3 \times 3$ minors of $M$ is nonzero:
	\begin{align*}
	h_1 &:= -4(x_A-1)x_A z_Az_B^2~, \quad \quad 
	& h_2 &:= -4(x_B-2) x_B  z_A^2 z_B~, \\
	h_3 &:= 8(x_A-1) x_A x_B z_A z_B~, \quad \quad 
	& h_4 &:= 8(x_B-2) x_A x_B z_A z_B~.
	\end{align*}
It follows that $\widetilde Q_1$ has a nonsingular zero if and only if at least one of the following ideals has nonempty real variety (for $i=1,2,3,4$):
\[
	I_i := \langle g_1, g_2, g_3, h_i w -1 \rangle \quad \subseteq \quad \mathbb{R}[x_A, x_B, z_A, z_B, w]~,
\]
where $w$ is a new variable (we refer the reader to \cite[Chapter~13]{Basu2006Algorithms}). 
Indeed, we find that  $V_{\R}(I_i)=\varnothing$ for all $i=1,2,3,4$, and so $\widetilde Q_1$ has {\em no} nonsingular zero.

We proceed to Step~3.  We define the ideal $Q_1^{(1)}=\langle g_1,g_2,g_3,h_1,h_2,h_3,h_4\rangle$, whose real variety then coincides with the real variety of $J$, and has dimension 0 ($<1$). We again apply Steps~1 and~2, this time for $Q_1^{(1)}$, as follows. 
This ideal can be decomposed as $Q_1^{(1)}=P_1^{(1)}\cap P_2^{(1)}$, where $P_1^{(1)}, P_2^{(1)}$ are the following two prime ideals of $\R[x_A, x_B, z_A, z_B]$: 
\[
 P_1^{(1)}= \langle  x_A-1, ~x_B-2,~ z_A-1,~2z_B^2-1 \rangle, \quad  P_2^{(1)}= \langle  x_A-1, ~x_B-2,~ z_A+1,~2z_B^2-1 \rangle.
\]
As $z_B\neq0$ for any zero of $P_i^{(1)}$, it is straightforward to check that any zero of $P_i^{(1)}$ is non-singular, for $i=1,2$. As both $x_A-1$ and $x_B-2$ belong to $Q_1^{(1)}$ we conclude that the system has ACR in both species, with ACR-values $1$ and $2$, respectively.
\end{example}

\section{Detecting zero-divisor ACR}  \label{sec:specialized-GB} 
In this section, we present a procedure for detecting ACR that works well for a type of ACR we call ``zero-divisor ACR''.  We motivate and define this concept in Section~\ref{sec:motivation}, and then present our procedure (Algorithm~\ref{Algorithm}) -- and examples of its usage -- in Section~\ref{sec:algorithm}.  The ideas behind this algorithm come from the theory of Gr\"obner bases for ideals involving parameters, which we describe in Section~\ref{sec:connection-theory}.

\subsection{Gr\"obner bases, elimination orders, and zero-divisor ACR} \label{sec:motivation}
Fix $i \in \{1,2,\dots, n\}$, and call $\hat{\mathbf{x}}_i=(x_1, \dots, x_{i-1},x_{i+1},\dots,x_n)$. An {\em elimination order} for $\hat{\mathbf{x}}_i$ on $\Q[x_1, x_2, \dots, x_n]$ is a monomial order such that every polynomial with leading monomial in $\Q[x_i]$ belongs to $\Q[x_i]$. 
One instance of such an order is a lexicographic order with $x_i$ smaller than the rest of the variables. Another example is a product order where 
\[
 \hat{\mathbf{x}}_i^\alpha x_i^a \succ \hat{\mathbf{x}}_i^\beta x_i^b \; 
 	\quad \Longleftrightarrow  \quad \; 
	\hat{\mathbf{x}}_i^\alpha \succ \hat{\mathbf{x}}_i^\beta, \text{ or } \hat{\mathbf{x}}_i^\alpha = \hat{\mathbf{x}}_i^\beta \text{ and } a>b.
\]
Given an elimination order $\succ$ for $\hat{\mathbf{x}}_i$ on $\Q[x_1,x_2,\dots,x_n]$, let $\succ_{\hat{\mathbf{x}}_i}$ denote the order on the ring $\Q[x_i][\hat{\mathbf{x}}_i]$ obtained by restricting the order $\succ$ to the monomials in $\Q[\hat{\mathbf{x}}_i]$. 
We denote the resulting {\em leading coefficient} of a polynomial $f\in \mathbb{Q}[x_i][\hat{\mathbf{x}}_i]$ as follows:
\[
\mathrm{lc}_{\hat{\mathbf{x}}_i}(f)~\in~ \mathbb{Q}[x_i]~.
\]
% We observe that  the {\em leading coefficient} of a polynomial $f\in \mathbb{Q}[x_i][\hat{\mathbf{x}}_i]$ is in fact a polynomial in $\mathbb{Q}[x_i]$.

One fact that we use below is the well-known {\em Elimination Theorem}.  This theorem states that for any elimination order for $\hat{\mathbf{x}}_i$ on $\Q[x_1,x_2,\dots,x_n]$, if we compute a Gr\"obner basis $\mathcal{G}$ of an ideal $I\subseteq \Q[x_1,x_2,\dots,x_n]$,  then $I\cap\Q[x_i]\neq\{0\}$ if and only if $\mathcal{G}\cap\Q[x_i]\neq\varnothing$, and, in this case, $I\cap\Q[x_i]=\langle \mathcal{G}\cap\Q[x_i]\rangle$ (see, for instance, \cite[Theorem~4.8]{Hassett}).

Our aim is to use Gr\"obner bases to propose candidates for ACR species and their corresponding ACR-values. We motivate this approach through the following example.

\begin{example}\label{ex:algorithm}
 Consider the following network and specified rate constants:
$G=\{  2A+C \overset{1}{\to} 2A+2C,~ A+C \overset{3}{\to} A,~ 
	C \overset{2}{\to} 2C,~ 2A+D \overset{1}{\to} 2A+2D,~ 
	A+D \overset{4}{\to} A,~ D \overset{3}{\to} 2D,~ C \overset{1}{\to} B,~ 
	B \overset{1}{\to} 0,~ A+C+D \overset{1}{\to} C+D,~ 
	C+D \overset{1}{\to} A+C+D\}$. 
The steady-state ideal $I$ is generated by 
	$f_A=-x_Ax_Cx_D+x_Cx_D$, 
	$f_B=x_C-x_B$, 
	$f_C=x_A^2x_C-3x_Ax_C+2x_C$, and 
	$f_D=x_A^2x_D-4x_Ax_D+3x_D$.

Consider the lexicographic order $x_B > x_C > x_D > x_A$. The reduced Gr\"obner basis of $I$ with respect to this order is 
 \begin{equation}\label{eq:G_A}
 	% GB is here
  {\mathcal G} = \{x_A^2x_D-4x_Ax_D+3x_D,~x_A^2x_C-3x_Ax_C+2x_C,~ x_Ax_Cx_D-x_Cx_D,~ x_B-x_C\}~.
 \end{equation}
As ${\mathcal G}\cap\Q[x_A]=\varnothing$, the Elimination Theorem implies that that $I\cap \Q[x_A]=\{0\}$.
The leading coefficients with respect to $\hat{\mathbf{x}}_A=(x_B, x_C, x_D)$ are, respectively: 
	\begin{align*}  
	x_A^2-4x_A+3, \quad x_A^2-3x_A+2,\quad x_A-1,\quad 1~.
 	\end{align*}
 	
From the third element of the Gr\"obner basis~$\mathcal G$, %~\eqref{eq:G_A}, 
namely, $x_Ax_Cx_D-x_Cx_D = x_C x_D (x_A - 1)$, we see that neither $ x_C x_D $ nor $x_A - 1$ is in $I$, and so  $x_A-1$ is a zero-divisor of $I$ (recall Definition~\ref{def:zero-divisor}). Moreover, it is easy to deduce from this polynomial that, in every positive steady state, the value of $x_A$ must be $1$.  That is, there is ACR in $X_A$ with ACR-value $1$. 
\end{example}

Example~\ref{ex:algorithm} inspires the next definition and the algorithm in the next subsection.
%We introduce now the notion of zero-divisor ACR.

\begin{definition} \label{def:zero-divisor-acr}
Assume $\alpha >0$.  A mass-action system $(G,\kappa^*)$ has {\em zero-divisor ACR} in $X_i$ with ACR-value $\alpha$ if $(G,\kappa^*)$ has ACR in $X_i$ with ACR-value $\alpha$ and either (1) $x_i - \alpha$ is in the steady-state ideal or (2) $x_i - \alpha$ is a zero-divisor of the steady-state ideal.
\end{definition}

\begin{example}[Example~\ref{ex:ShiFein}, continued] \label{ex:ShiFein1.5}
We return to the Shinar and Feinberg network~\eqref{eq:example_Feinberg_multisite}. 
From the Gr\"obner basis shown earlier, we see that the linear polynomial 
 $g_1/x_7= \kappa_7\kappa_9x_5-(\kappa_3\kappa_8+\kappa_3\kappa_9) $ is a zero-divisor of $I$, 
 as
 $g_1/x_7 \notin I$, 
 $x_7\notin I$, and $g_1\in I$.  Therefore, for all choices of rate constants, this network exhibits zero-divisor ACR (in the fifth species, namely, $Y_p$).
\end{example}

\begin{remark}
The concept of zero-divisor ACR is closely related to condition~\eqref{eq:wrong-condition}, which we saw earlier is neither necessary nor sufficient for ACR.  In particular, we saw a mass-action system with ACR, but not zero-divisor ACR (Example~\ref{ex:zero-div-condition-not-necessary}).  We also saw a situation where we have zero-divisor ACR, and saturating the steady-state ideal was not enough to make $x_i - \alpha$, where $\alpha$ is the ACR-value, appear in the ideal (Example~\ref{ex:wrong2}).
\end{remark}

\begin{remark}[Zero-divisor ACR detection through Gr\"obner bases] \label{rem:gb}
For many mass-action systems (or even networks), zero-divisor ACR can be detected from a 
reduced 
Gr\"obner basis $\mathcal G$ under a suitable elimination order for $\hat{\mathbf{x}}_i$.  More precisely, in many examples  in the literature, when there is zero-divisor ACR in some species $X_i$ with ACR-value $\alpha$, the following holds:
\begin{align} \label{eq:wrong-condition-gb}
	\textrm{there exists } \alpha \in \mathbb{R}_{>0} \textrm{ such that } 
	x_i - \alpha \textrm{ divides some element of } \mathcal G~.
\end{align}
Such a system was shown in Example~\ref{ex:algorithm}.  As for a network, recall, in Examples~\ref{ex:ShiFein} and~\ref{ex:ShiFein1.5}, that $\kappa_7\kappa_9x_5-(\kappa_3\kappa_8+\kappa_3\kappa_9)$ divides the first element of the reduced Gr\"{o}bner basis of the steady-state ideal $I$ with respect to the lexicographic order $x_1 >x_2 >  x_3 > x_4 > x_6 > x_7 > x_5$.  Thus, in this example, ACR in $X_5$ is detected from (factors of) the Gr\"obner basis, as in~\eqref{eq:wrong-condition-gb}.  
\end{remark}

In light of Remark~\ref{rem:gb}, it is natural to ask whether condition~\eqref{eq:wrong-condition-gb} is necessary for zero-divisor ACR.
  However, this is not true, as we see below in Example~\ref{ex:nonstraightforward_zero_divisor}. This fact motivates the need for Algorithm~\ref{Algorithm} in the next subsection.

\begin{example}[Zero-divisor ACR is not detected through Gr\"obner bases] \label{ex:nonstraightforward_zero_divisor}
 Consider the following polynomials:
 \begin{align*}
  f_1 ~&=~ x_Bx_C[(x_A-1)^2(x_B+3)+(x_C-2)] \\
  f_2~&=~x_Bx_C[(x_A-1)(x_B+3)^2-(x_C-2)]~.
 \end{align*}
These equations lead to a mass-action system in three species $A$, $B$, and $C$ with $dx_A/dt=f_1$, $dx_B/dt=dx_C/dt=f_2$ (recall Remark~\ref{rmk:hungarian-lemma}). 
The reduced Gr\"{o}bner basis of the steady-state ideal $I= \langle f_1, f_2 \rangle$ with respect to the lexicographic order $x_B >x_C > x_A$ (which is an elimination order for $\{x_B,x_C\}$, that is, for eliminating $x_A$) is ${\mathcal G}=\{g_1,g_2,g_3,g_4\}$, where
 \begin{align}
  	\notag
  g_1 &= x_Bx_C(x_C-2)(-x_C+x_A^3-3x_A^2+3x_A+1)~, \\
	\label{eq:gb-for-nonstraightforward}
  g_2 &=x_Bx_C(x_Bx_A^2-2x_Bx_A+x_B+x_C+3x_A^2-6x_A+1)~, \\
  	\notag
  g_3 &= x_Bx_C(-x_Bx_C+3x_Bx_A^2-6x_Bx_A+5x_B-x_Cx_A+x_C+9x_A^2-16x_A+7)~, \\
  	\notag
  g_4 &=x_Bx_C(x_B^2x_A-x_B^2+6x_Bx_A-6x_B-x_C+9x_A-7)~.
 \end{align}
Observe that ${\mathcal G} \cap \Q[x_A]=\varnothing$ (and so $x_A-1$ is not in $I$) and also that $x_A-1$ does not divide any element of ${\mathcal G}$.

 To confirm that this example shows that 
 condition~\eqref{eq:wrong-condition-gb} is not necessary for zero-divisor ACR, 
 %this system has zero-divisor ACR in $A$ that is {\em not} detected from the Gr\"obner basis ${\mathcal G}$, 
  it suffices to show that there is zero-divisor ACR in $A$ with ACR-value $1$. 
 To see that $x_A-1$ is a zero-divisor of $I$, observe that the following polynomial belongs to $I= \langle f_1, f_2 \rangle$:
 \[
  h~=~f_1+f_2~=~(x_A-1)x_Bx_C(x_B+3)[(x_A-1)+(x_B+3)]~.
 \]
We saw earlier that $x_A-1$ is not in $I$, and it is straightforward to check that $h/(x_A-1)= x_Bx_C(x_B+3)[(x_A-1)+(x_B+3)]$ also is not in $I$.  Hence, $x_A-1$ is a zero-divisor of $I$.
%and $x_A-1$ and $x_Bx_C(x_B+3)[(x_A-1)+(x_B+3)]$ are zero-divisors of $I$. 

Next, from examining the factors of $h$, we see that every positive steady state satisfies $x_A=1$.  We use this fact, together with the structure of the polynomials $f_1$ and $f_2$, to see that the positive steady-state locus is $\{(1,x_B,2) \mid x_B >0\}$.  This confirms that there is zero-divisor ACR in species $A$ with ACR-value $1$ (and also ACR in species  $C$).
\end{example}

\subsection{Algorithm} \label{sec:algorithm}
The ideas in the prior subsection motivate Algorithm~\ref{Algorithm}, which outputs candidates for ACR species (and the corresponding ACR-values). Once we obtain a candidate ACR-value $\alpha$ for species $X_i$, we can compute the colon ideal $I:(x_i-\alpha)$ to recover  a polynomial $h$ such that the product $(x_i-\alpha)h\in I$ (see Example~\ref{ex:nonstraightforward_zero_divisor_cont}).

\begin{algorithm}[!htb] \KwIn{A mass-action system $(G,\kappa^*)$ with $n$ species (and rate constants $\kappa^*$ in $\mathbb{Q}$).}
\KwOut{%Candidates for ACR: 
A finite set $S$ of pairs 
$(X_j, \beta)$. }
 Initialize $S=\varnothing$\;
 Let $I$ be the steady-state ideal of $(G,\kappa^*)$\;
 \For{$i\in\{1,2,\dots,n\}$}{
 	Call $\hat{\mathbf{x}}_i=(x_1,\dots,x_{i-1},x_{i+1},\dots,x_n)$. Let $\mathcal G$ be a reduced Gr\"obner basis of $I$ for an elimination order for $\hat{\mathbf{x}}_i$ on $\Q[x_1,x_2,\dots,x_n]$.
	Let ${\mathcal G}_{i}:={\mathcal G}\cap {\mathbb Q}[x_i]$\;
  \eIf{${\mathcal G}_{i} \neq \varnothing$}{Let $\widetilde h$ denote the unique element of ${\mathcal G}_{i}$\;
	\For{$\alpha$ a positive root of $\widetilde h$}{append $(X_i,\alpha)$ to $S$}%.\;
   }{\For{$g \in \mathcal{G}$}{
   \For{$\alpha$ a positive root of the leading coefficient ${\rm lc}_{\hat{\mathbf{x}}_i}(g)$}{append $(X_i,\alpha)$ to $S$}}   
  }
 }
    \caption{{\bf Candidates for ACR} \label{Algorithm}}
\end{algorithm}

We conjecture, as follows, that Algorithm \ref{Algorithm} succeeds for all systems with zero-divisor ACR. 

\begin{conjecture} \label{conj:algorithm}
Consider a  mass-action system $(G,\kappa^*)$ with rational-number rate constants. 
If $(G, \kappa^*)$ has zero-divisor ACR in species $X_i$ with ACR-value $\alpha$,
then  $(X_i, \alpha)$ is one of the outputs of Algorithm~\ref{Algorithm}.
\end{conjecture}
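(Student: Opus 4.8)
The plan is to fix the species $X_i$ and the ACR-value $\alpha$ as in the statement and to trace through the single iteration of Algorithm~\ref{Algorithm} belonging to this $i$, showing that $(X_i,\alpha)$ is appended. Observe first that having zero-divisor ACR in $X_i$ \emph{with ACR-value} $\alpha$ presupposes at least one positive steady state $x^*$ (otherwise the ACR-value is not defined; the statement should be read with this as a standing hypothesis, since under the vacuous reading it is false), and that then $x^*_i=\alpha>0$ by ACR; we use $x^*$ throughout. Let $\mathcal G$ be the reduced Gr\"obner basis used in that iteration, for an elimination order $\succ$ for $\hat{\mathbf{x}}_i$, with $\succ_{\hat{\mathbf{x}}_i}$ and $\mathrm{lc}_{\hat{\mathbf{x}}_i}$ as in Section~\ref{sec:motivation}. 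I would then split according to the branch taken.

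In the branch $\mathcal G\cap\Q[x_i]\neq\varnothing$ --- which occurs in particular whenever $x_i-\alpha\in I$, i.e.\ in case~(1) of Definition~\ref{def:zero-divisor-acr}, and also in some instances of case~(2) --- the Elimination Theorem says the unique $\widetilde h\in\mathcal G\cap\Q[x_i]$ generates $I\cap\Q[x_i]$; since $\widetilde h\in I$ and involves only $x_i$, evaluating at $x^*$ gives $\widetilde h(\alpha)=0$. As a positive steady state exists, $I\neq\langle1\rangle$, so $\widetilde h$ is nonconstant and $\alpha>0$ is one of its positive roots; hence the algorithm appends $(X_i,\alpha)$. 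This is the easy branch.

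The substantive branch is $\mathcal G\cap\Q[x_i]=\varnothing$, equivalently $I\cap\Q[x_i]=\{0\}$; then we are necessarily in case~(2), so (over $\R[x_1,\dots,x_n]$) $x_i-\alpha$ is a zero-divisor of the steady-state ideal but does not belong to it. It suffices to show $\mathrm{lc}_{\hat{\mathbf{x}}_i}(g)(\alpha)=0$ for some $g\in\mathcal G$: then $\mathrm{lc}_{\hat{\mathbf{x}}_i}(g)$ is a nonzero element of $\Q[x_i]$ having the positive number $\alpha$ as a root, and the inner loop over $g$ appends $(X_i,\alpha)$. To prove this vanishing I would argue by contradiction, assuming $\mathrm{lc}_{\hat{\mathbf{x}}_i}(g)(\alpha)\neq0$ for every $g\in\mathcal G$. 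Put $B:=\R[x_i]_{(x_i-\alpha)}$, the localization at the maximal ideal $(x_i-\alpha)$, a discrete valuation ring with uniformizer $x_i-\alpha$ and residue field $\R$, and $M_B:=B[\hat{\mathbf{x}}_i]/I\,B[\hat{\mathbf{x}}_i]$. On one hand, $x_i-\alpha$ lies in an associated prime $\mathfrak p$ of $\R[x_1,\dots,x_n]/I\,\R[x_1,\dots,x_n]$, and $\mathfrak p\cap\R[x_i]=(x_i-\alpha)$ by maximality, so $\mathfrak p$ survives the localization as an associated prime of $M_B$ still containing $x_i-\alpha$; hence $x_i-\alpha$ is a zero-divisor on $M_B$. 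On the other hand, under our assumption every $\mathrm{lc}_{\hat{\mathbf{x}}_i}(g)$ is a unit of $B$, so the division algorithm over $B[\hat{\mathbf{x}}_i]$ with respect to $\succ_{\hat{\mathbf{x}}_i}$ runs with invertible leading coefficients --- the Gianni--Kalkbrener ``good specialization'' situation --- which forces $\mathcal G$ to be a Gr\"obner basis of $I\,B[\hat{\mathbf{x}}_i]$ over $B[\hat{\mathbf{x}}_i]$ and therefore $M_B$ to be a \emph{free} $B$-module on the standard monomials. A free module over the domain $B$ is torsion-free, so the nonzero element $x_i-\alpha$ acts injectively on $M_B$, contradicting the previous sentence. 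Hence some $\mathrm{lc}_{\hat{\mathbf{x}}_i}(g)(\alpha)=0$, completing the argument.

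The hard part will be making the ``good specialization'' step fully rigorous: one needs that ``$\mathrm{lc}_{\hat{\mathbf{x}}_i}(g)(\alpha)\neq0$ for all $g\in\mathcal G$'' is \emph{exactly} the condition under which $\mathcal G$ stays a Gr\"obner basis after localizing at $(x_i-\alpha)$ and $M_B$ becomes free over the resulting DVR. This is classical for block/product elimination orders, but Algorithm~\ref{Algorithm} allows an arbitrary elimination order for $\hat{\mathbf{x}}_i$; for a general such order one must first secure the structural fact that $\mathrm{LM}_\succ(g)$ is a power of $x_i$ times $\mathrm{LM}_{\hat{\mathbf{x}}_i}(g)$, which holds for lexicographic and product orders (enough for every example here) but may fail for exotic ones, so the conjecture might need to be restricted to such orders or an extra argument supplied. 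It is worth noting that in the substantive branch the ACR hypothesis itself is not used --- only $\alpha>0$ enters --- so this line of proof would in fact show that Algorithm~\ref{Algorithm} flags every positive $\beta$ with $x_i-\beta$ a zero-divisor of the steady-state ideal, which also explains why its output is a list of candidates (some possibly spurious) rather than a verdict.
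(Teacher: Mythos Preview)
The paper does \emph{not} prove this statement; it is explicitly left as an open conjecture, and the paper settles only the easy case in Remark~\ref{rem:easy-case-of-conj} (when $x_i-\alpha\in I$, so that $x_i-\alpha$ is the unique element of $\mathcal G\cap\Q[x_i]$). Your proposal therefore goes substantially beyond what the paper establishes, and your handling of the first branch matches the paper's argument exactly.

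For the substantive branch your localization/associated-prime argument is a genuinely new idea relative to the paper, and for product or lexicographic elimination orders it is essentially correct. The chain ``$x_i-\alpha$ zero-divisor $\Rightarrow$ lies in an associated prime $\mathfrak p$ $\Rightarrow$ $\mathfrak p\cap\R[x_i]=(x_i-\alpha)$ $\Rightarrow$ $\mathfrak p$ survives localization at $(x_i-\alpha)$ $\Rightarrow$ $x_i-\alpha$ is a zero-divisor on $M_B$'' is sound. On the other side, for a product order one has $\mathrm{LM}_\succ(g)=\mathrm{LM}_{\hat{\mathbf x}_i}(g)\cdot x_i^{a_g}$, from which it follows that $\mathcal G$ is a Gr\"obner basis of $I\,K[\hat{\mathbf x}_i]$ over $K=\R(x_i)$; combined with the assumption that each $\mathrm{lc}_{\hat{\mathbf x}_i}(g)$ is a unit in $B$, the standard-monomial argument gives that $M_B$ is free (indeed torsion-free suffices over the DVR $B$). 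This yields the desired contradiction. Your closing observation---that in this branch only the zero-divisor property is used, not ACR itself---is also correct and nicely explains why the algorithm returns candidates rather than certificates.

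The gap you flag is real and is exactly why the paper leaves the statement as a conjecture rather than a theorem. Algorithm~\ref{Algorithm} permits an \emph{arbitrary} elimination order for $\hat{\mathbf x}_i$, and for such orders the identity $\mathrm{LM}_\succ(g)=\mathrm{LM}_{\hat{\mathbf x}_i}(g)\cdot x_i^{a_g}$ can fail, so the passage ``$\mathcal G$ is a Gr\"obner basis over $\Q$ $\Rightarrow$ $\mathcal G$ is a Gr\"obner basis over $K[\hat{\mathbf x}_i]$'' is not available in general. Thus what you have is a proof of the conjecture \emph{restricted to runs of the algorithm that use product or lexicographic orders}---already more than the paper proves---together with a clear diagnosis of what remains for the general case. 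This is consistent with the paper's own discussion in Section~\ref{sec:connection-theory}, where Lemma~\ref{lem:suzuki-sato} (the Suzuki--Sato specialization lemma) is cited as the inspiration for the algorithm but not turned into a proof. If you want to push further, the natural next step is either to restrict the algorithm to block orders (making your argument a complete proof) or to find a counterexample with an exotic elimination order.
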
 

\begin{remark} \label{rem:easy-case-of-conj}
Recall from Definition~\ref{def:zero-divisor-acr} that zero-divisor ACR means that either (1) $x_i - \alpha$ is in the steady-state ideal or (2) $x_i - \alpha$ is a zero-divisor of the steady-state ideal.  In case~(1), it is straightforward to see that Conjecture~\ref{conj:algorithm} holds.  Indeed, in that case, 
$x_i - \alpha$ is the unique element of ${\mathcal G}\cap {\mathbb Q}[x_i]$, and so $(X_i,\alpha)$ is an output of the algorithm.
\end{remark}

Next, we present examples that show how to apply Algorithm~\ref{Algorithm} and interpret its output.

\begin{example}[Example~\ref{ex:algorithm}, continued]\label{ex:algorithm_cont}
 Recall that the network in Example~\ref{ex:algorithm} has steady-state ideal 
 $I=\langle-x_Ax_Cx_D+x_Cx_D,~ x_C-x_B,~ x_A^2x_C-3x_Ax_C+2x_C,~ x_A^2x_D-4x_Ax_D+3x_D\rangle$. 
\begin{itemize}
	% ORDER 1
 \item The reduced Gr\"obner basis $\mathcal G$ of $I$ with respect to  the lexicographic order $x_B > x_C > x_D > x_A$ was shown in~\eqref{eq:G_A}, where we obtained  ${\mathcal G}\cap\Q[x_A]=\varnothing$. Now Algorithm~\ref{Algorithm} asks us to find the positive roots of the leading coefficients with respect to $\mathbf{x}=(x_B, x_C, x_D)$; these coefficients are $x_A^2-4x_A+3, x_A^2-3x_A+2, x_A-1, 1$. Therefore, the algorithm outputs three pairs: $(x_A,1)$, $(x_A,3)$, and $(x_A,2)$.
	% ORDER 2
 \item The reduced Gr\"obner basis of $I$ with respect to the lexicographic order $x_A > x_C > x_D > x_B$ is 
 \[
  {\mathcal G} = \{x_C-x_B,~ x_Ax_Bx_D-x_Cx_D,~ x_A^2x_B-3x_Ax_C+2x_C,~x_A^2x_D-4x_Ax_D+3x_D\}.
 \]
 As ${\mathcal G}\cap\Q[x_B]=\varnothing$, the algorithm asks us to find the positive roots of the leading coefficients with respect to $\mathbf{x}=(x_A, x_C, x_D)$: $1$ and $x_B$. As there are no such positive roots,  the algorithm returns no candidates for ACR-values in $x_B$. 
 	\item The situation for $x_C$ and $x_D$ is similar to $x_B$: no candidate ACR-values are returned.
\end{itemize}
Thus, for this system,  Algorithm~\ref{Algorithm} returns three possible ACR-values for species $A$, namely, $1$, $2$, and $3$.  Next, we show that, in fact, there is ACR in $A$, and the ACR-value is one of the candidates, namely, $1$. 
Indeed, it is straightforward to see that the positive steady-state locus is 
$\{ (1,\alpha,\alpha,\beta) \mid \alpha>0,~ \beta>0\}$.  
As we saw in Example~\ref{ex:algorithm},  $x_A-1$ is a zero-divisor of $I$ and hence the system has zero-divisor ACR in the species $A$.
%If we examine $f_D$ we see that in any positive steady either $x_A=1$ or $x_A=3$.
%However, if we solve for $x_C$ in $f_C$ with $x_A=3$, we get that $x_C=0$. And if $x_A=1$, we obtain $(1,\alpha,\alpha,\beta)$ is a positive steady state for any $\alpha>0$ and $\beta>0$. Moreover, it is easy to check in this case that $x-1$ is a zero-divisor of $I$.
\end{example}

In the prior example, as we perform Algorithm~\ref{Algorithm}, we encounter the empty set for ${\mathcal G}_{i}:={\mathcal G}\cap {\mathbb Q}[x_i] $.  In contrast, this set is nonempty in the next example.

\begin{example} \label{ex:apply-algorithm-2-outputs} 
 Consider the network $G=\{  2A+C \overset{1}{\to} 2A+2C,~ A+C \overset{3}{\to} A,~ 
	C \overset{2}{\to} 2C,~ C \overset{1}{\to} B,~ 
	B \overset{1}{\to} 0,~ A \overset{4}{\to} 0,~ 0 \overset{3}{\to} A,~ 
	2A \overset{1}{\to} 3A\}$. 
When we apply Algorithm~\ref{Algorithm}, we obtain that the reduced Gr\"obner basis with respect to the lexicographic order $x_B>x_C>x_A$ of the steady-state ideal contains the polynomial $x_A^2-4x_A+3\in\Q[x_A]$. Thus, the pairs $(x_A,1)$ and $(x_A,3)$ are given as outputs. With arguments similar to those in Example~\ref{ex:algorithm}, we confirm there is ACR in species $A$ with ACR-value $1$.
\end{example}

The next example features a network with vacuous ACR.

\begin{example} \label{ex:network-no-pos-steady-states}
Consider the network $G=\{  A+C \overset{1}{\to} 2C,~ C \overset{1}{\to} A,~ C \overset{1}{\to} B+C,~  B \overset{1}{\to} 2B\}$.
The reduced Gr\"obner basis with respect to the lexicographic order $x_B>x_C>x_A$ of the steady-state ideal is ${\mathcal G}=\{x_Ax_C-x_C,~x_B+x_C\}$. 
Algorithm~\ref{Algorithm} reveals $x_A=1$ as a candidate for ACR-value, but this system has no positive steady states.
\end{example}

\begin{example}[Example~\ref{ex:ShiFein1.5}, continued] \label{ex:ShiFein2}
We return to the Shinar and Feinberg network~\eqref{eq:example_Feinberg_multisite}. 
Symbolic rate constants appear in what follows, even though the algorithm applies to specialized values of rate constants, because the steps of the algorithm (in this case) do not depend on the specific choice of the rate constants. 
We apply the algorithm, as follows.  
Recall that the reduced Gr\"{o}bner basis $ {\mathcal G}$ of the steady-state ideal $I$ with respect to the lexicographic order $x_1 >x_2 > x_4 > x_6 > x_7 > x_5$ satisfies ${\mathcal G}\cap \Q[x_5]=\varnothing$.
We
therefore consider the positive roots of the leading coefficients with respect to 
$\mathbf{x}=(x_1,x_2,x_3,x_4,x_6,x_7)$, which are 
$\kappa_7\kappa_9x_5-(\kappa_3\kappa_8+\kappa_3\kappa_9), \kappa_6, \kappa_4, \kappa_3, \kappa_1$. We obtain $(X_5,\frac{\kappa_3\kappa_8+\kappa_3\kappa_9}{\kappa_7\kappa_9})$ as an output of Algorithm~\ref{Algorithm}. As we saw earlier, indeed $X_5= Y_p$ exhibits (zero-divisor) ACR.
%Call $g_1=k_7k_9x_7x_5-(k_3k_8+k_3k_9)x_7$, the first element of ${\mathcal G}$, then it is easy to see from $g_1$ that we have actually obtained the only possible ACR-value for $Y_p$ in any positive steady state. Moreover, $k_7k_9x_5-(k_3k_8+k_3k_9)$ is a zero-divisor of $I$ since $k_7k_9x_5-(k_3k_8+k_3k_9)\notin I$, $x_7\notin I$, and $g_1\in I$.
\end{example}

Next, we revisit Example~\ref{ex:nonstraightforward_zero_divisor} in which zero-divisor ACR was not detected by the Gr\"obner basis of the steady-state ideal.  Fortunately, however, ACR is detected through Algorithm~\ref{Algorithm}.

\begin{example}[Example~\ref{ex:nonstraightforward_zero_divisor}, continued]\label{ex:nonstraightforward_zero_divisor_cont}
We apply Algorithm~\ref{Algorithm} to the mass-action system in Example~\ref{ex:nonstraightforward_zero_divisor}. 
The leading coefficients of the Gr\"{o}bner basis elements~\eqref{eq:gb-for-nonstraightforward} with respect to $\mathbf{x}=(x_B, x_C)$ are $1$, $(x_A-1)^2$, $1$, and $x_A-1$, respectively. The algorithm therefore outputs the pair $(x_A,1)$.  (Recall that, indeed, there is ACR in species $A$.)

Next, we consider species $C$.  In the reduced Gr\"{o}bner basis of $I$ with respect to the lexicographic order $x_A >x_B > x_C$, the  leading coefficients with respect to $\mathbf{x}=(x_A, x_B)$ are $x_C (x_C -2)$ and $x_C$. Hence, the algorithm returns the pair $(x_C,2)$, which is the actual ACR-value for species $C$, as we saw in Example~\ref{ex:nonstraightforward_zero_divisor}. In fact, $x_C-2$ is also a zero-divisor of $I$, which is straightforward to check from the first element of the Gr\"{o}bner basis~\eqref{eq:gb-for-nonstraightforward}. %which is $x_Bx_C(x_C-2)(x_B^3+9x_B^2+27x_B+x_C+25)$. 

Note that once we know that, for example, the ACR value for $x_A$ could be $\alpha=1$, then we can compute the colon ideal $I:(x_A-1)$ to recover $h=(x_A-1)x_Bx_C(x_B+3)[(x_A-1)+(x_B+3)]$. From this, we see that there is zero-divisor ACR in species $x_A$ with ACR-value $1$.

\end{example}

\subsection{Connection to parametric ideals and Gr\"obner covers} \label{sec:connection-theory}
%Connect with ideas (earlier) in Section 6.1
%Question of specialization: is specialized GB always a GB?  Not always.
In the future, we would like to resolve Conjecture~\ref{conj:algorithm}.  One approach to doing so is through the theory of Gr\"obner bases for ideals involving parameters, which in fact inspired Algorithm~\ref{Algorithm} in the first place.  We describe this theory, as it pertains to the algorithm, in this subsection.

In general, a parametric ideal is an ideal 
$I \subseteq \mathbb{F}(t_1, t_2, \dots, t_m)[x_1, x_2, \dots, x_n]$, where $t_1, t_2, \dots, t_m$ are the parameters. Given a Gr\"obner basis $\mathcal{G}$ of $I$, a fundamental problem is to find a proper variety $W\subseteq \mathbb{F}^m$ such that $\mathcal{G}$ remains a Gr\"obner basis under all specializations $(t_1, t_2, \dots, t_m) \longmapsto (a_1, a_2, \dots, a_m) \in \mathbb{F}^m \smallsetminus W$.  Much theory has been developed to tackle this problem (see, for instance, the book~\cite{MontesGC}).
A common approach involves turning the parameters into variables, that is, considering the ring 
$\mathbb{F}[\mathbf{x},\mathbf{t}] = \mathbb{F}[x_1, x_2, \dots, x_n, t_1, t_2, \dots, t_m]$. 
% We also fix a monomial order with the property that $\mathbf{x^\mathbf{u}} > \mathbf{x^\mathbf{v}}$  implies  $\mathbf{x^\mathbf{u}} > \mathbf{x^\mathbf{v}}\mathbf{t^\mathbf{w}}$. 

In this setting, a result of Suzuki and Sato is fundamental~\cite{suzuki-sato2003} (see also~\cite[Lemma~3.11]{MontesGC} and \cite[\S 6.3 Proposition 1]{CLO15}).  We note that their result holds for an arbitrary (finite) number of parameters $t_1, t_2, \dots, t_m$. However, to simplify the exposition, we restrict this result 
to the one-parameter case and rephrase it in  our  context which is when this parameter is in fact one of the variables $x_i$. 
The resulting 
Lemma~\ref{lem:suzuki-sato} inspired our Algorithm~\ref{Algorithm}.

\begin{lemma}[Suzuki-Sato]\label{lem:suzuki-sato}
 Given an ideal $I\subseteq\Q[x_1,x_2,\dots,x_n]$, fix $i \in \{1,2,\dots, n\}$ and call $\hat{\mathbf{x}}_i=(x_1,\dots,x_{i-1},x_{i+1},\dots,x_n)$. Consider an elimination order $\succ$ for $\hat{\mathbf{x}}_i$ on $\Q[x_1,x_2,\dots,x_n]$, and let 
 $\mathcal{G}=\{g_1,g_2,\dots,g_s\}$ be a reduced Gr\"obner basis of $I$ with respect to this order. If $\alpha\in\overline{\Q}$ is such that $lc_{\hat{\mathbf{x}}_i} (g_j)|_{x_i=\alpha}\neq0$ for any $1\leq j\leq s$ with $g_j\notin \mathcal{G} \cap\Q[x_i]$, then the specialized set 
    $\{ g_1|_{x_i=\alpha},g_2|_{x_i=\alpha}, \dots, g_t|_{x_i=\alpha}\}$ 
    is a reduced Gr\"obner basis of the specialized ideal $I|_{x_i=\alpha}\subseteq\overline{\Q}[\hat{\mathbf{x}}_i]$ with respect to the order $\succ_{\hat{\mathbf{x}}_i}$.
\end{lemma}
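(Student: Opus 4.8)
\medskip

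The plan is to prove this by the standard specialization argument for Gr\"obner bases: show that $S$-polynomials of the specialized generators reduce to zero, using the hypothesis that no leading coefficient $lc_{\hat{\mathbf{x}}_i}(g_j)$ vanishes at $x_i = \alpha$. The key preliminary observation is that, because $\succ$ is an \emph{elimination order} for $\hat{\mathbf{x}}_i$, every $g_j \notin \mathcal{G} \cap \Q[x_i]$ has a leading monomial (with respect to $\succ$) that genuinely involves the variables $\hat{\mathbf{x}}_i$; writing $g_j \in \Q[x_i][\hat{\mathbf{x}}_i]$, its leading monomial with respect to $\succ_{\hat{\mathbf{x}}_i}$ (call it $m_j \in \Q[\hat{\mathbf{x}}_i]$) has coefficient $lc_{\hat{\mathbf{x}}_i}(g_j) \in \Q[x_i]$. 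The hypothesis $lc_{\hat{\mathbf{x}}_i}(g_j)|_{x_i = \alpha} \neq 0$ is precisely what guarantees that, under the specialization homomorphism $\sigma : \Q[x_1,\dots,x_n] \to \overline\Q[\hat{\mathbf{x}}_i]$ sending $x_i \mapsto \alpha$, the leading term of $g_j$ survives: $\operatorname{LT}_{\succ_{\hat{\mathbf{x}}_i}}(\sigma(g_j)) = \big(lc_{\hat{\mathbf{x}}_i}(g_j)|_{x_i=\alpha}\big)\, m_j$. The generators $g_j \in \mathcal{G} \cap \Q[x_i]$ simply map to constants in $\overline\Q$ (nonzero or zero), and if any is a nonzero constant then $I|_{x_i=\alpha} = \langle 1 \rangle$ and the statement is trivial; so we may assume $\mathcal{G} \cap \Q[x_i] \subseteq \langle$ polynomials vanishing at $\alpha\rangle$, i.e.\ those generators specialize to $0$ and can be discarded.

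\medskip

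With that setup, I would argue as follows. First, $\{\sigma(g_j)\}$ generates $I|_{x_i=\alpha} := \sigma(I) \overline\Q[\hat{\mathbf{x}}_i]$ — this is immediate since the $g_j$ generate $I$ and $\sigma$ is a ring homomorphism. Second, to show it is a Gr\"obner basis with respect to $\succ_{\hat{\mathbf{x}}_i}$, apply Buchberger's criterion: for each pair $(j,k)$ with $\sigma(g_j), \sigma(g_k)$ both nonzero, consider the $S$-polynomial $S(\sigma(g_j), \sigma(g_k))$ in $\overline\Q[\hat{\mathbf{x}}_i]$. Because leading terms are preserved under $\sigma$ (up to the nonzero scalar $lc_{\hat{\mathbf{x}}_i}(g_\cdot)|_{x_i=\alpha}$), one checks that $S(\sigma(g_j),\sigma(g_k)) = \sigma\big(S(g_j, g_k)\big)$, possibly times a unit. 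Now $\mathcal{G}$ is a Gr\"obner basis of $I$ over $\Q$, so $S(g_j,g_k)$ has a standard representation $S(g_j, g_k) = \sum_\ell h_\ell\, g_\ell$ with $\operatorname{LM}_\succ(h_\ell g_\ell) \preceq \operatorname{LM}_\succ\big(S(g_j,g_k)\big)$ for all $\ell$. Applying $\sigma$ to this identity and controlling the leading monomials — again using that $\sigma$ kills no leading term of a $g_\ell \notin \mathcal{G}\cap\Q[x_i]$, while the $g_\ell \in \mathcal{G}\cap\Q[x_i]$ contribute nothing — yields a standard representation of $\sigma(S(g_j,g_k))$ in terms of $\{\sigma(g_\ell)\}$ with respect to $\succ_{\hat{\mathbf{x}}_i}$. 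Hence every $S$-polynomial reduces to zero and $\{\sigma(g_j)\}$ is a Gr\"obner basis. Third, for \emph{reducedness}: each $\sigma(g_j)$ (for surviving $j$) is monic after dividing by the nonzero constant $lc_{\hat{\mathbf{x}}_i}(g_j)|_{x_i=\alpha}$, and no monomial of $\sigma(g_j)$ other than its leading one is divisible by a leading monomial of another $\sigma(g_k)$ — this follows because $g_j$ was reduced over $\Q$ (no non-leading monomial of $g_j$, viewed in $\hat{\mathbf{x}}_i$, is divisible by $m_k$) and $\sigma$ only shrinks the monomial support.

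\medskip

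The main obstacle I anticipate is the bookkeeping in the $S$-polynomial step: one must verify carefully that applying $\sigma$ to a standard representation over $\Q$ does not raise the $\succ_{\hat{\mathbf{x}}_i}$-degree of any term — the subtlety is that $\sigma$ could \emph{a priori} cancel leading terms and cause an apparent drop, which is harmless for the inequality, but one needs to be sure no \emph{new} large monomials appear, which they cannot since $\sigma$ maps each monomial $\hat{\mathbf{x}}_i^\gamma x_i^a$ to a scalar times $\hat{\mathbf{x}}_i^\gamma$. A secondary technical point is handling the case where two distinct $g_j, g_k$ have the same leading monomial $m_j = m_k$ after specialization — but this cannot happen for a reduced Gr\"obner basis since leading monomials over $\Q$ are already distinct and, as noted, $m_j$ is determined by $\operatorname{LM}_\succ(g_j)$, which is preserved. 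Since the paper only \emph{cites} this lemma (attributing it to Suzuki--Sato~\cite{suzuki-sato2003}, with pointers to \cite[Lemma~3.11]{MontesGC} and \cite[\S 6.3 Proposition 1]{CLO15}) rather than reproving it, I would in practice present the argument above in condensed form and refer the reader to those sources for the full verification of Buchberger's criterion under specialization.
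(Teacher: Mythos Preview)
Your proposal is a reasonable sketch of the standard specialization argument, and you correctly anticipate the situation: the paper does \emph{not} supply its own proof of this lemma. It is stated as a restriction (to a single parameter, taken to be one of the variables $x_i$) of a result attributed to Suzuki and Sato~\cite{suzuki-sato2003}, with additional pointers to \cite[Lemma~3.11]{MontesGC} and \cite[\S6.3, Proposition~1]{CLO15}. There is thus no in-paper proof against which to compare your argument; your outline via Buchberger's criterion under the specialization homomorphism $\sigma$ is essentially how the cited sources proceed, and your closing remark---present a condensed argument and defer details to those references---matches exactly what the paper does.
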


\begin{remark} \label{rem:s-sato}
Lemma~\ref{lem:suzuki-sato}
 extends to any computable field $\mathbb{F}$, in which case we consider $\alpha$ in the algebraic closure of $\mathbb{F}$ \cite{MontesGC}. 
\end{remark}

As explained above, 
Lemma~\ref{lem:suzuki-sato} is part of the theory of parametric Gr\"obner bases.  
There are several algorithms for computing with parametric Gr\"obner bases, one of which is implemented in {\tt Singular}~\cite{DGPS} and is based on the theory of Gr\"obner covers~\cite{MontesGC, Grobcov}. We end this section by illustrating this theory, as it pertains to ACR, through the following example.

 \begin{example}[Example \ref{ex:nonstraightforward_zero_divisor_cont} continued; zero-divisor ACR detected via Gr\"obner covers] \label{ex:nonstraightforward_zero_divisor-2}
We revisit the mass-action system from Example \ref{ex:nonstraightforward_zero_divisor}, in which the steady-state ideal is generated by the  polynomials
$  f_1 = x_Bx_C[(x_A-1)^2(x_B+3)+(x_C-2)]$ and $
  f_2= x_Bx_C[(x_A-1)(x_B+3)^2-(x_C-2)]$.
The following {\tt Singular} code computes the Gr\"obner cover of $I =  \langle f_1, f_2 \rangle$: 

\begin{verbatim}
LIB "grobcov.lib";
ring r = (0,xA),(xB,xC),dp;
ideal I = xB*xC*((xA-1)^2*(xB+3)+(xC-2)), xB*xC*((xA-1)*(xB+3)^2+(xC-2));
grobcov(I,("rep",1));
\end{verbatim}

% The output is shown below:
% \begin{verbatim}
% [1]:
%    [1]:
%       _[1]=xB^2*xC
%       _[2]=xB*xC^3
%    [2]:
%       _[1]=(xA^2-2*xA+1)*xB^2*xC+xB*xC^2+(3*xA^2-6*xA+1)*xB*xC
%       _[2]=xB*xC^3+(xA^3-3*xA^2+3*xA-5)*xB*xC^2+(-2*xA^3+6*xA^2-6*xA+6)*xB*xC
%    [3]:
%       [1]:
%          _[1]=0
%       [2]:
%          _[1]=(xA-1)
% [2]:
%    [1]:
%       _[1]=xB*xC^2
%    [2]:
%       _[1]=xB*xC^2-2*xB*xC
%    [3]:
%       [1]:
%          _[1]=(xA-1)
%       [2]:
%          _[1]=1
% \end{verbatim}

We suppress the lengthy output, but describe its structure and meaning, as follows.
The output of {\tt grobcov} is a list of lists (in this example, the list has two elements).  Each entry of the main list is a triple  of the form $\{\text{initial}, \text{basis}, \text{segment}\}$, where:
\begin{itemize}
    \item ``initial'' is a monomial ideal which is the initial ideal of any specialization at any point in  ``segment'', and 
    \item ``basis'' is the corresponding reduced  Gr\"obner basis. 
% for each point of the segment.  
\end{itemize}
The option (``rep'',1) returns the segments in canonical C-representation, that is, as pairs of the form $(J_1,J_2)$ representing the interval $V(J_1) \smallsetminus V(J_2)$, where the ideals $J_1$ and $J_2$ are radical and $J_2 \supseteq J_1$.

In our case, the output shows that any specialization of the ideal $I$ at any point in the segment $V(0)\smallsetminus V(x_A - 1) = \Q \smallsetminus \{1\}$ has the initial ideal $\langle x_B^2x_C,\, x_Bx_C^3 \rangle$. However, when $x_A = 1$, the initial ideal is $\langle x_Bx_C^2  \rangle$. Recall from Example \ref{ex:nonstraightforward_zero_divisor} that $x_A=1$ is precisely the ACR-value for the corresponding mass-action system. Thus, in this example, the presence of ACR is detected via Gr\"obner covers.
\end{example}

Example~\ref{ex:nonstraightforward_zero_divisor-2} motivates the following question: {\em If a mass-action system has zero-divisor ACR, is this always detected via a Gr\"obner cover?}  We believe that an affirmative answer would be a key step toward resolving Conjecture~\ref{conj:algorithm}, and therefore validating Algorithm~\ref{Algorithm}.

\section{Numerical methods for detecting or precluding ACR}  

\label{sec:numerical}

In Section~\ref{sec:ACR}, we gave necessary and sufficient conditions for ACR in a reaction system $(G, \kappa)$ in terms of the positive steady-state locus $\sqrt[>0]{I(G,\kappa)}$ (recall Proposition \ref{prop:ideal}). 
As discussed throughout, the fundamental challenge to detecting ACR is that it requires an understanding of the ideal $\sqrt[>0]{I(G,\kappa)}$.  While a few
(restricted)
techniques exist to fully understand  $\sqrt[>0]{I(G,\kappa)}$, especially when working over $\mathbb R$ rather than $\mathbb Q$, we can gain some knowledge using techniques from numerical algebraic geometry.  

Numerical algebraic geometry is concerned with computing solutions to a system of polynomial equations, often using methods based on polynomial homotopy continuation (the fundamentals of this theory can be found in \cite{bates2013numerically}). 
Algorithms in numerical algebraic geometry 
aim to find isolated solutions to systems, and also the positive-dimensional solution sets, e.g. curves and surfaces. In essence, these algorithms give a geometric way to decompose a complex variety, and with care, can yield insight into the decomposition of a real variety.

In this section, we start by providing background on real algebraic sets (Section~\ref{sec:real-algebraic-sets}) and describing some tools and objects from numerical algebraic geometry, in particular, numerical irreducible decompositions and witness sets (Section~\ref{sec:witness-sets}).  We then 
use these tools to 
give an algorithm for detecting ACR (Procedure~\ref{proc:numerical-irreducible-decomposition}) and an algorithm for precluding ACR (Procedure~\ref{proc:denseset}). The first uses numerical irreducible decompositions and witness sets to see whether any component of $V_{\mathbb C}(I(G, \kappa))$ lies in a hyperplane of the form $\{ x_i - \alpha\}$ (Section~\ref{sec:numerical-irreducible-decomposition}).  The second uses tools for numerically solving polynomial equations to find a dense set of points on each component of $V_{\mathbb R}(I(G, \kappa))$ that intersects a specified region of the positive orthant (Section~\ref{subsec:real-connected-components}). In this section, we only consider ACR for systems $(G, \kappa)$ as opposed to networks. In particular, we view $\kappa$ as fixed. 

\begin{remark}
    In this section, we work with the steady-state ideal $I(G, \kappa)$, rather than the positive-restriction ideal $J(G, \kappa)$ (from Definition~\ref{def:j-ideal}).  This choice allows us to keep the number of equations and variables to half of those in the positive-restriction ideal.  Nevertheless, both of the algorithms we propose in this section can be used with $J(G, \kappa)$.
\end{remark}

\subsection{Real algebraic sets} \label{sec:real-algebraic-sets}

We start this section by introducing real algebraic sets and restating Proposition \ref{prop:ideal} from a geometric point-of-view (see Proposition~\ref{prop:hyperplane}). 

A \emph{real algebraic set}, or \emph{real algebraic variety}, is the intersection of a complex variety with $\mathbb R^n$. For instance, consider the real variety 
$$V_{\mathbb R} (\sqrt[>0]{I(G, \kappa)}) ~=~ \{ x \in \mathbb R^n \ | \ h(x) = 0 \text{ for all } h \in \sqrt[>0]{I(G,\kappa)}\}~.$$
We can rewrite this variety as the following intersection:
$$V_{\mathbb R} (\sqrt[>0]{I(G, \kappa)}) ~=~ \mathbb R^n \cap V_{\mathbb C}(\sqrt[>0]{I(G,\kappa)})~,$$
where 
$$V_{\mathbb C}(\sqrt[>0]{I(G, \kappa)}) ~=~ \{x \in \mathbb C^n \ | \ h(x) = 0 \text{ for all } h \in \sqrt[>0]{I(G, \kappa)}\}~.$$

\begin{proposition} \label{prop:hyperplane} 
A reaction system $(G, \kappa)$ has ACR in $X_i$ if and only if 
there exists $\alpha > 0$ such that
$V_{\mathbb R} (\sqrt[>0]{I(G, \kappa)})$ lies in 
the
hyperplane $\{x_i - \alpha=0\}$.
\end{proposition}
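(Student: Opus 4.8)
The plan is to translate Proposition~\ref{prop:ideal} from the language of ideals to the language of varieties, using the fact that the ideal $\sqrt[>0]{I(G,\kappa)}$ is, by its very definition, a \emph{radical} ideal (it is the vanishing ideal of a set of points). First I would recall that, by Definition~\ref{def:steady-ste}, $\sqrt[>0]{I(G,\kappa)}$ consists of all polynomials vanishing on every positive steady state; hence $V_{\mathbb C}(\sqrt[>0]{I(G,\kappa)})$ is the Zariski closure (over $\mathbb C$) of the positive steady-state locus, and $V_{\mathbb R}(\sqrt[>0]{I(G,\kappa)})$ is its real points. A polynomial $h$ lies in $\sqrt[>0]{I(G,\kappa)}$ if and only if $h$ vanishes on the positive steady-state locus, which (since the positive steady states are dense in the real variety $V_{\mathbb R}(\sqrt[>0]{I(G,\kappa)})$ when this ideal is real-radical, or more simply just by chasing the definitions) is equivalent to $h$ vanishing on all of $V_{\mathbb R}(\sqrt[>0]{I(G,\kappa)})$.

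The argument then splits into the two implications. For the forward direction, suppose $(G,\kappa)$ has ACR in $X_i$. If the system has no positive steady states (vacuous ACR), then $\sqrt[>0]{I(G,\kappa)} = \langle 1 \rangle$, so $V_{\mathbb R}(\sqrt[>0]{I(G,\kappa)}) = \varnothing$, which lies in the hyperplane $\{x_i - \alpha = 0\}$ for any $\alpha > 0$ vacuously. Otherwise, by Proposition~\ref{prop:ideal}, there is $\alpha > 0$ with $x_i - \alpha \in \sqrt[>0]{I(G,\kappa)}$; then $x_i - \alpha$ vanishes on $V_{\mathbb R}(\sqrt[>0]{I(G,\kappa)})$, i.e., that real variety is contained in $\{x_i - \alpha = 0\}$. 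For the converse, suppose $V_{\mathbb R}(\sqrt[>0]{I(G,\kappa)}) \subseteq \{x_i - \alpha = 0\}$ for some $\alpha > 0$. By the definition of $\sqrt[>0]{I(G,\kappa)}$, every positive steady state lies in $V_{\mathbb R}(\sqrt[>0]{I(G,\kappa)})$, hence satisfies $x_i = \alpha$; so either there are no positive steady states (vacuous ACR) or all of them have the same value $\alpha$ of $x_i$, which is ACR in $X_i$. Invoking Proposition~\ref{prop:ideal} (or just Definition~\ref{def:ACR} together with Definition~\ref{def:vacuous}) finishes this direction.

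The only subtlety — and the step I would be most careful about — is the logical equivalence ``$x_i - \alpha$ vanishes on the positive steady-state locus $\iff$ $x_i - \alpha$ vanishes on $V_{\mathbb R}(\sqrt[>0]{I(G,\kappa)})$.'' One inclusion is immediate since the positive steady states are a subset of $V_{\mathbb R}(\sqrt[>0]{I(G,\kappa)})$. For the other, note that any $h$ vanishing on the positive steady-state locus belongs to $\sqrt[>0]{I(G,\kappa)}$ by definition, hence vanishes on $V_{\mathbb R}(\sqrt[>0]{I(G,\kappa)})$ tautologically. So in fact no density or real-Nullstellensatz input is needed here — the equivalence is purely formal from the definition of $\sqrt[>0]{I(G,\kappa)}$ as a vanishing ideal. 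Given that, the proof is essentially a restatement of Proposition~\ref{prop:ideal}, and I would present it in a few lines, handling the vacuous case explicitly so that the phrase ``lies in the hyperplane'' is unambiguous when the real variety is empty.
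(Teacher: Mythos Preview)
Your proposal is correct and follows essentially the same approach as the paper, which simply writes that the result follows immediately from Proposition~\ref{prop:ideal}. You have just unpacked that one-line invocation in full detail, including the vacuous case and the formal observation that membership in $\sqrt[>0]{I(G,\kappa)}$ is by definition equivalent to vanishing on the positive steady-state locus.
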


\begin{proof}  This result follows immediately from Proposition \ref{prop:ideal}.
\end{proof}
As we explain in the next subsection, every complex variety can be written as the union of irreducible components. 
Real algebraic sets can also be decomposed. Indeed, a real algebraic set $V_{\mathbb R}$ can be decomposed into the union of finitely many path-connected sets $C_1,C_2, \ldots, C_{\ell}$ where, for each $i$, both $C_i$ and $V_{\mathbb R} \smallsetminus C_{i}$ are closed in the Euclidean topology.  Each such set $C_i$  is a \emph{real connected component} of $V_{\mathbb R}$ (for more details, see \cite{bochnak2013real}).  The following result pertains to connected components of $V_{\mathbb R} (\sqrt[>0]{I(G, \kappa)})$, and it follows immediately from Proposition \ref{prop:hyperplane}.

\begin{corollary} \label{cor:ACR-real-components}
A reaction system $(G, \kappa)$ has ACR in $X_i$ if and only if 
 there exists $\alpha > 0$ such that
every real connected component of $V_{\mathbb R} (\sqrt[>0]{I(G, \kappa)})$ lies in the hyperplane $\{x_i - \alpha=0\}$.
\end{corollary}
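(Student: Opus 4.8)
The plan is to derive Corollary~\ref{cor:ACR-real-components} directly from Proposition~\ref{prop:hyperplane} together with the decomposition of a real algebraic set into real connected components described just above the corollary. First I would invoke Proposition~\ref{prop:hyperplane}: the system $(G,\kappa)$ has ACR in $X_i$ if and only if there exists $\alpha>0$ such that $V_{\mathbb R}(\sqrt[>0]{I(G,\kappa)})$ is contained in the hyperplane $H_{i,\alpha}:=\{x_i-\alpha=0\}$. So it suffices to show, for a fixed $\alpha>0$, that $V_{\mathbb R}(\sqrt[>0]{I(G,\kappa)}) \subseteq H_{i,\alpha}$ if and only if every real connected component of $V_{\mathbb R}(\sqrt[>0]{I(G,\kappa)})$ is contained in $H_{i,\alpha}$.

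The key observation is simply that $V_{\mathbb R}(\sqrt[>0]{I(G,\kappa)})$ is the union of its real connected components $C_1, C_2, \ldots, C_\ell$. A union of sets is contained in $H_{i,\alpha}$ if and only if each set in the union is contained in $H_{i,\alpha}$; this is elementary set theory and requires no further argument. Combining this equivalence with Proposition~\ref{prop:hyperplane} yields the statement: $(G,\kappa)$ has ACR in $X_i$ iff there is $\alpha>0$ with $V_{\mathbb R}(\sqrt[>0]{I(G,\kappa)}) \subseteq H_{i,\alpha}$ iff there is $\alpha>0$ such that $C_j \subseteq H_{i,\alpha}$ for all $j=1,2,\dots,\ell$. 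Note it is important that the \emph{same} $\alpha$ works for all components, which matches the placement of the existential quantifier in the corollary's statement; I would make this explicit to avoid the (incorrect) reading in which a component-dependent $\alpha_j$ is allowed.

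There is essentially no obstacle here — the decomposition into real connected components $C_1,\dots,C_\ell$ is quoted from \cite{bochnak2013real} in the paragraph preceding the corollary, and the only content beyond Proposition~\ref{prop:hyperplane} is the trivial distributivity of containment over unions. If I wanted to be slightly more careful, I would remark that finiteness of the number of real connected components (which is what lets us write the finite union, and is also stated in the preceding paragraph) is not even needed for this argument: the equivalence holds for an arbitrary decomposition of $V_{\mathbb R}(\sqrt[>0]{I(G,\kappa)})$ into a union of subsets. So the proof is a one-line deduction, exactly as the paper indicates with ``it follows immediately from Proposition~\ref{prop:hyperplane}.''
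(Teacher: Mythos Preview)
Your proposal is correct and takes essentially the same approach as the paper, which simply states that the result follows immediately from Proposition~\ref{prop:hyperplane}. You have merely made explicit the trivial set-theoretic content behind that word ``immediately,'' namely that a set is contained in a hyperplane if and only if each piece of any decomposition of it is.
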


Recall from~\eqref{eq:ideal-containments} the containments  $I(G,\kappa) 
~\subseteq~
\sqrt{I(G,\kappa)}
~\subseteq~
\sqrt[{> 0}]{I(G,\kappa)}$.  It follows that 
%\begin{align*}
%V_{\mathbb C}(\sqrt[{> 0}]{I(G,\kappa)}) 
%\subseteq
%&V_{\mathbb C}(\sqrt{I(G,\kappa)})
%\subseteq
%&V_{\mathbb C}(I(G,\kappa) ), \textrm{ and thus,} 
%\\
$$V_{\mathbb C}(\sqrt[{> 0}]{I(G,\kappa)}) \cap \mathbb R^n ~
\subseteq ~
V_{\mathbb C}(\sqrt{I(G,\kappa)}) \cap \mathbb R^n
~ \subseteq
~V_{\mathbb C}(I(G,\kappa) ) \cap \mathbb R^n,
$$
and so each real connected component of $V_{\mathbb R}(\sqrt[{> 0}]{I(G,\kappa)})$ is contained in a real connected component of $ V_{\mathbb R}(I(G,\kappa) )$.  
This fact, together with Corollary~\ref{cor:ACR-real-components}, yields the following sufficient condition to detect ACR.

\begin{proposition} \label{prop:real-components} 
If 
 there exists $\alpha > 0$ such that
every real connected component of $V_{\mathbb R} (I(G, \kappa))$ lies either in some coordinate hyperplane $\{x_j = 0\}$ or in the hyperplane $\{x_i - \alpha=0\}$, then the reaction system $(G, \kappa)$ has ACR in $X_i$. 
\end{proposition}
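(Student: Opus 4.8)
The plan is to reduce Proposition~\ref{prop:real-components} to Corollary~\ref{cor:ACR-real-components}, which already characterizes ACR in $X_i$ by the condition that every real connected component of $V_{\mathbb R}(\sqrt[{>0}]{I(G,\kappa)})$ lies in a single hyperplane $\{x_i - \alpha = 0\}$ for some $\alpha > 0$. So it suffices to show that the hypothesis here -- that there is $\alpha > 0$ with every real connected component of $V_{\mathbb R}(I(G,\kappa))$ contained in some coordinate hyperplane $\{x_j = 0\}$ or in $\{x_i - \alpha = 0\}$ -- forces that conclusion. First I would invoke the containment chain~\eqref{eq:ideal-containments}, $I(G,\kappa) \subseteq \sqrt{I(G,\kappa)} \subseteq \sqrt[{>0}]{I(G,\kappa)}$, which the excerpt has already leveraged to conclude $V_{\mathbb C}(\sqrt[{>0}]{I(G,\kappa)}) \cap \mathbb R^n \subseteq V_{\mathbb C}(I(G,\kappa)) \cap \mathbb R^n$, hence that each real connected component of $V_{\mathbb R}(\sqrt[{>0}]{I(G,\kappa)})$ sits inside a real connected component of $V_{\mathbb R}(I(G,\kappa))$.

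Next I would argue that the relevant components of $V_{\mathbb R}(\sqrt[{>0}]{I(G,\kappa)})$ cannot be the ones swallowed by a coordinate hyperplane $\{x_j = 0\}$. Indeed, $V_{\mathbb R}(\sqrt[{>0}]{I(G,\kappa)})$ is, by the Real Nullstellensatz together with the definition of $\sqrt[{>0}]{I(G,\kappa)}$, the Zariski (equivalently Euclidean) closure of the positive steady-state locus $V_{>0}(I(G,\kappa)) \subseteq \mathbb R^n_{>0}$; in particular every point of $V_{>0}(I(G,\kappa))$ has all coordinates strictly positive and so lies in none of the coordinate hyperplanes. Now take any real connected component $C$ of $V_{\mathbb R}(\sqrt[{>0}]{I(G,\kappa)})$. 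Either $C$ meets $\mathbb R^n_{>0}$ or it does not. If it does, then by the previous paragraph $C$ is contained in a real connected component $C'$ of $V_{\mathbb R}(I(G,\kappa))$, and since $C$ (hence $C'$) contains a point with all coordinates positive, $C'$ is \emph{not} contained in any $\{x_j = 0\}$; therefore by hypothesis $C' \subseteq \{x_i - \alpha = 0\}$, whence $C \subseteq \{x_i - \alpha = 0\}$. If $C$ does not meet $\mathbb R^n_{>0}$, then (as $V_{\mathbb R}(\sqrt[{>0}]{I(G,\kappa)})$ is the closure of $V_{>0}(I(G,\kappa))$, a subset of the open orthant) $C$ contains no interior-orthant point; one then checks that such a component is actually empty or, more carefully, does not arise -- more precisely, since the whole set $V_{\mathbb R}(\sqrt[{>0}]{I(G,\kappa)})$ is the closure of $V_{>0}(I(G,\kappa))$, a component disjoint from the open orthant would be disjoint from the dense subset being closed, forcing it to be contained in the boundary; here it is cleanest to observe instead that if $(G,\kappa)$ has no positive steady state then ACR holds vacuously (Definition~\ref{def:vacuous}, Proposition~\ref{prop:ideal}), and otherwise every component of the closure of the nonempty set $V_{>0}(I(G,\kappa))$ meets $\mathbb R^n_{>0}$. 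Either way, every real connected component of $V_{\mathbb R}(\sqrt[{>0}]{I(G,\kappa)})$ lies in $\{x_i - \alpha = 0\}$, so Corollary~\ref{cor:ACR-real-components} yields ACR in $X_i$.

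I expect the one delicate point to be the bookkeeping around components of $V_{\mathbb R}(\sqrt[{>0}]{I(G,\kappa)})$ that fail to meet the open positive orthant: one must rule these out or show they are harmless. The clean way is to split on whether $V_{>0}(I(G,\kappa)) = \varnothing$: in that case $\sqrt[{>0}]{I(G,\kappa)} = \langle 1 \rangle$, the variety is empty, and ACR is vacuous; in the contrary case, the closure of a nonempty subset of the connected open set $\mathbb R^n_{>0}$ has the property that each of its connected components meets $\mathbb R^n_{>0}$, so the argument above applies verbatim. The remaining steps -- the containment of ideals, the containment of components, and the observation that positivity excludes coordinate hyperplanes -- are all routine, so the proof is short once this case distinction is made explicit.
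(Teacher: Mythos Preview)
Your approach---reduce to Corollary~\ref{cor:ACR-real-components} via the containment of components---is exactly what the paper sketches, and you are right to flag the case of components of $V_{\mathbb R}(\sqrt[{>0}]{I(G,\kappa)})$ that might miss the open orthant as the only non-routine point. However, your resolution of that case rests on a false claim: you assert that when $V_{>0}(I(G,\kappa))$ is nonempty, every Euclidean connected component of its closure meets $\mathbb R^n_{>0}$. This fails for the Zariski closure (which is what $V_{\mathbb R}(\sqrt[{>0}]{I(G,\kappa)})$ actually is): if $n=2$ and the positive steady-state locus happens to be the branch $\{(t,1/t):t>0\}$ of a hyperbola, its real Zariski closure is $\{x_1x_2=1\}$, whose second connected component $\{(t,1/t):t<0\}$ lies entirely outside $\mathbb R^2_{>0}$. (Your parenthetical that the Zariski and Euclidean closures coincide is likewise false in this example.)

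The gap is easy to close once you abandon the detour through components of $V_{\mathbb R}(\sqrt[{>0}]{I(G,\kappa)})$ and argue directly on positive steady states. Any positive steady state $x^*$ lies in $V_{\mathbb R}(I(G,\kappa))$, hence in one of its real connected components $C$; since every $x^*_j>0$, the component $C$ cannot sit inside a coordinate hyperplane, so the hypothesis forces $C\subseteq\{x_i=\alpha\}$ and in particular $x^*_i=\alpha$. That is ACR by definition. If you still wish to land on Corollary~\ref{cor:ACR-real-components}, note that this already gives $x_i-\alpha\in\sqrt[{>0}]{I(G,\kappa)}$, whence all of $V_{\mathbb R}(\sqrt[{>0}]{I(G,\kappa)})$---every component, orthant-meeting or not---lies in $\{x_i=\alpha\}$, and no case analysis on components is needed.
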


The condition in Proposition \ref{prop:real-components} is sufficient, but not necessary for ACR. For example, if $I(G,\kappa)=\langle x_Ax_B(x_A-1)(x_B+2)\rangle$, then  $V_{\mathbb R} (I(G, \kappa))$ has four real components: the real hyperplanes $\{x_A = 0\}$, $\{x_B = 0\}$, $\{x_A -1 = 0\}$, and $\{x_B + 2=0\}$. 
By examining the last component,  $\{x_B + 2=0\}$, we see that 
the reaction system $(G,\kappa)$ does not satisfy the hypothesis of Proposition \ref{prop:real-components}, despite having
 ACR in species A. 

\begin{remark} \label{rem:hyperplane-check}
    Proposition \ref{prop:real-components} is related to Remark \ref{rem:local-acar}, as the method of Pascual-Escudero and Feliu in \cite{beatriz-elisenda} aims to determine whether every real connected component of $V_{\mathbb R} (\sqrt[>0]{I(G, \kappa)})$ is contained in a finite union of hyperplanes of the form $\{x_i - \alpha = 0 \}$.
\end{remark}

In this section, we give two numerical techniques to check for ACR.  
Procedure \ref{proc:denseset} -- for precluding ACR -- relies on Proposition~\ref{prop:real-components}, while 
Procedure \ref{proc:numerical-irreducible-decomposition} -- for detecting ACR -- relies on numerical irreducible decompositions, which we define in the next subsection.

\subsection{Numerical irreducible decomposition and witness sets} \label{sec:witness-sets}
Let $V_{\C}$ be a complex variety.  The 
{\em pure $j$-dimensional component} of $V_{\C}$ is the union of the irreducible components of dimension $j$.  Thus, the variety $V_{\C}$ has an irreducible decomposition of the form
\begin{equation}\label{Eq:IrredDecomp}
V_{\C} ~=~ \bigcup_{j=0}^{\text{dim}V_{\C}} V_j ~=~ \bigcup_{j=0}^{\text{dim} V_{\C}} \bigcup_{l=1}^{k_j} V_{j,l}~,
\end{equation}
where $V_j$ is the pure $j$-dimensional component of $V_{\C}$, and each $V_{j,l}$ is a $j$-dimensional irreducible component, which can be found using symbolic or numerical methods.  Symbolic methods, which are more costly, find irreducible decompositions algebraically, and the output is a set of ideals described by generators.  In contrast, methods using numerical algebraic geometry find {\em numerical irreducible decompositions} by 
combining tools from algebraic geometry and numerical analysis (e.g., via a polynomial homotopy continuation), and the output is a \emph{witness set}. 

Here we briefly describe numerical irreducible decompositions (see~\cite{bates2013numerically} for details).  At a high level, each such decomposition is a collection of points obtained from intersecting each complex component of the variety with an affine generic linear space. %\footnote{In this context, a linear space is {\em generic} if it is defined by linear equations whose coefficients lie in the complement of a proper Zariski closed set. For witness sets, the precise genericity condition appears in \cite[\S13.2]{sommese2005numerical}.}.  
In particular, let $V_{j,l}$ be a $j$-dimensional irreducible component of $V_{\C}$ with $d_{j,l} = \deg~V_{j,l}$; then for a fixed $j$-codimensional generic affine linear space $H_{j,l} \subseteq \C^n$, the intersection $V_{j,l} \cap H_{j,l}$ consists of $d_{j,l}$ points.  In this context, we will use the genericity condition in \cite[\S13.2]{sommese2005numerical}. In particular, an affine linear space $H$ of codimension $j$ defined by a linear system $L = (L_1, L_2, \ldots, L_j) = 0$ is \emph{generic with respect to an irreducible algebraic set $X \subseteq \mathbb C^n$} if for any given subset  $\{ L_{i_1} , L_{i_2} , \ldots, L_{i_r} \} \subseteq \{L_1, L_2, \ldots, L_j\}$ it follows that (1) either $X \cap  V_{\mathbb C}(L_{i_1} ,L_{i_2} , \ldots, L_{i_r})$ is empty or $\dim \left( X \cap  V_{\mathbb C}(L_{i_1} , L_{i_2} , \ldots, L_{i_r}) \right) = \dim X - r \geq 0$, and (2) the singular points of 
$X \cap  V_{\mathbb C}(L_{i_1} , L_{i_2} , \ldots, L_{i_r})$ are a subset of the singular points of $X$. 
 In the case of an algebraic set $X$ that is reducible,
%The 
an affine linear space $H$ defined by a linear system $L$ is generic with respect to $X$
%a reducible algebraic set $X$ 
if it
is generic with respect to all irreducible components of $X$, plus all irreducible components of intersections of any number of the irreducible components of $X$.

After associating to each $H_{j,l}$ a linear system $L_{j,l}$, such
that $V_{\C}(L_{j,l}) = H_{j,l}$.  The set $V_{j,l}\cap V_{\C}(L_{j,l}) = V_{j,l}\cap H_{j,l}$ is called a
{\em witness point set} for $V_{j,l}$. The triple
$\mathcal{W}_{j,l} = \{I(V_{\C}),~L_{j,l},~V_{j,l}\cap V_{\C}(L_{j,l})\}$ 
is a {\em witness set} for $V_{j,l}$. 
A {\em numerical irreducible decomposition} of $V_{\C}$ is of the form
\begin{equation}\label{Eq:NumIrredDecomp}
\bigcup_{j=0}^{\text{dim}V_{\C}} \bigcup_{l=1}^{k_j} \mathcal{W}_{j,l}~,
\end{equation}
where $\mathcal{W}_{j,l}$ is a witness set for a distinct $j$-dimensional irreducible component of $V_{\C}$ and the union operation is a formal union.
%We note that the union in (\ref{Eq:NumIrredDecomp}) %should be considered as 
%is a formal union.  
By construction, each witness set includes information about the dimension and degree of the corresponding component.

Next, if we change our choice of 
%    Furthermore, if we compute a numerical irreducible decomposition of $V_{\C}^{G,\kappa}$ using a 
generic linear space %that is different from 
$V_{\C}(L_{j,l})$,
then the set of points in $V_{j,l}\cap V_{\C}(L_{j,l})$ will change, unless $j=0$ (that is, unless $V_{j,l}$ is a 0-dimensional component). We state this observation formally, as follows.

\begin{proposition}\label{prop:two-linear-spaces}  
%Let $V_{\mathbb C} \subseteq \mathbb C^n$ be an irreducible complex variety of dimension $d>0$. Let $\alpha \in \mathbb{C}$, $i \in \{1,2, \ldots, n\}$, and $H_1$ and $H_2$ be two distinct generic affine codimension-$d$ linear spaces.   If $p_i = q_i = \alpha$ for all $p \in V_{\mathbb C} \cap H_1$ and $q \in V_{\mathbb C} \cap H_2$, then $V_{\mathbb C}$ lies in the hyperplane $\{x_i - \alpha =0\}$.

%{\color{blue}
Let $V_{\mathbb C} \subseteq \mathbb C^n$ be an irreducible complex variety of dimension $d>0$. 
Assume that there exist
 $\alpha \in \mathbb{C}$, $i \in \{1,2, \ldots, n\}$, and 
 an affine codimension-$d$ linear space $H_1$
% $H_1$ be an affine codimension-$d$ linear space 
 that is generic with respect to $V_{\mathbb C}$ such that $p_i = \alpha$ for all $p \in V_{\mathbb C} \cap H_1$.  
Assume, additionally, that there exists
an affine codimension-$d$ linear space 
 $H_2$ 
 %be an affine codimension-$d$ linear space 
 that is generic with respect to both $V_{\mathbb C}$ and $V_{\mathbb C} \cap \{x_i - \alpha =0 \}$,  such that $q_i = \alpha$ for all $q \in V_{\mathbb C} \cap H_2$. Then $V_{\mathbb C}$ lies in the hyperplane
 $\{ x_i - \alpha =0 \}$. 
 % }
\end{proposition}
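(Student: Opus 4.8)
The plan is to argue by contradiction, with essentially all of the work taking place at the fully generic linear space $H_2$. Suppose that $V_{\mathbb{C}}$ is \emph{not} contained in the hyperplane $\{x_i - \alpha = 0\}$, and set $W := V_{\mathbb{C}} \cap \{x_i - \alpha = 0\}$. Since the single polynomial $x_i - \alpha$ does not vanish identically on the irreducible variety $V_{\mathbb{C}}$, the set $W$ is a proper Zariski-closed subset of $V_{\mathbb{C}}$, and hence $\dim W \leq d - 1$.

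Next I would extract the two properties of $H_2$ that are needed. First, because $H_2$ has codimension $d$ and is generic with respect to the $d$-dimensional irreducible variety $V_{\mathbb{C}}$, the intersection $V_{\mathbb{C}} \cap H_2$ is a witness point set for $V_{\mathbb{C}}$: it is finite and nonempty, consisting of exactly $\deg V_{\mathbb{C}} \geq 1$ points (this is the witness-point count recalled in Section~\ref{sec:witness-sets}, following \cite[\S13.2]{sommese2005numerical}). Second, $H_2$ is also generic with respect to $W = V_{\mathbb{C}} \cap \{x_i - \alpha = 0\}$; applying the genericity condition to each irreducible component of $W$, using the full defining linear system of $H_2$, and using that every component of $W$ has dimension at most $d-1$, strictly less than $d$, I would conclude that $W \cap H_2 = \varnothing$ (the alternative, that the intersection has dimension $\dim W - d \geq 0$, is impossible).

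Finally I would combine the two observations: by hypothesis, every point $q \in V_{\mathbb{C}} \cap H_2$ satisfies $q_i = \alpha$, so $q \in V_{\mathbb{C}} \cap \{x_i - \alpha = 0\} = W$; therefore $V_{\mathbb{C}} \cap H_2 \subseteq W \cap H_2 = \varnothing$, which contradicts the nonemptiness of $V_{\mathbb{C}} \cap H_2$. This contradiction forces $V_{\mathbb{C}} \subseteq \{x_i - \alpha = 0\}$, as claimed. I would also remark that the hypothesis involving $H_1$ is not invoked anywhere in this argument; its purpose is only in the intended application, where the first witness point set $V_{\mathbb{C}} \cap H_1$ is what reveals the candidate ACR-value $\alpha$, after which a second, independently generic space $H_2$ is chosen to certify it.

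The one delicate step is the deduction $W \cap H_2 = \varnothing$: it requires applying precisely the notion of genericity of an affine linear space with respect to a possibly reducible algebraic set (Section~\ref{sec:witness-sets}), together with the strict inequality $\dim W \leq d-1 < d$. The accompanying fact that $V_{\mathbb{C}} \cap H_2$ is nonempty — also essential for the contradiction — is immediate from the witness-set construction, and the remaining steps are routine.
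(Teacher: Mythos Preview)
Your proof is correct and follows essentially the same approach as the paper's: argue by contradiction, use genericity of $H_2$ with respect to $V_{\mathbb C}$ to get $V_{\mathbb C}\cap H_2\neq\varnothing$, use genericity of $H_2$ with respect to $W=V_{\mathbb C}\cap\{x_i-\alpha=0\}$ together with $\dim W\le d-1$ to get $W\cap H_2=\varnothing$, and then contradict the hypothesis that every point of $V_{\mathbb C}\cap H_2$ lies in $W$. Your observation that $H_1$ plays no role in the argument is accurate; the paper's proof mentions $H_1$ only to note that $V_{\mathbb C}\cap H_1$ and $V_{\mathbb C}\cap H_2$ have the same (positive) cardinality, but nonemptiness of $V_{\mathbb C}\cap H_2$ already follows from genericity of $H_2$ alone, exactly as you say.
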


\begin{proof} %{\color{blue} 
Assume $H_1$ is a generic affine codimension-$d$ linear space with respect to $V_{\mathbb C}$, and $H_2$ is a generic affine codimension-$d$ linear space with respect to both $V_{\mathbb C}$ and $V_{\mathbb C} \cap \{x_i - \alpha =0\}$. 
Now assume $p_i = q_i = \alpha$ for all $p \in V_{\mathbb C} \cap H_1$ and $q \in V_{\mathbb C} \cap H_2$.  By the genericity of $H_1$ and $H_2$, the sets $V_{\mathbb C} \cap H_1$ 
 and $V_{\mathbb C} \cap H_2$ are both nonempty 
zero-dimensional sets containing the same number of points \cite[Theorem 13.2.1]{sommese2005numerical}, in particular, $V_{\mathbb C} \ \cap H_2 \ \cap \ \{x_i - \alpha=0\}$ is nonempty.

Now, suppose $V_{\mathbb C}$ does \emph{not} lie in the hyperplane $\{x_i - \alpha=0\}$. This implies that \\ $\dim \left( V_{\mathbb C} \cap \{x_i - \alpha=0\} \right) \leq d-1$, and thus, by the genericity of $H_2$, the set $V_{\mathbb C} \ \cap H_2 \ \cap \ \{x_i - \alpha=0\}$ is empty, a contradiction. %}
\end{proof}

% {\color{brown}
% Let $V_{\mathbb C} \subseteq \mathbb C^n$ be an irreducible complex variety of dimension $d>0$. 
% Assume that there exist  $\alpha \in \mathbb{C}$, $i \in \{1,2, \ldots, n\}$, and an affine codimension-$d$ linear space $H_1$ that is generic with respect to $V_{\mathbb C}$ such that $V_{\mathbb C} \cap H_1\neq \emptyset$ and $p_i = \alpha$ for all $p \in V_{\mathbb C} \cap H_1$.  
% Assume, additionally, that there exists an affine codimension-$d$ linear space  $H_2$ that is generic with respect to both $V_{\mathbb C}$ and $V_{\mathbb C} \cap \{x_i - \alpha =0 \}$,  such that $V_{\mathbb C} \cap H_2\neq \emptyset$ and $q_i = \alpha$ for all $q \in V_{\mathbb C} \cap H_2$. Then $V_{\mathbb C}$ lies in the hyperplane
%  $\{ x_i - \alpha =0 \}$. [Mercedes]
% }

%{\color{blue}
\begin{remark} \label{remark:prob-one}
The hypothesis in Proposition \ref{prop:two-linear-spaces} would be satisfied with probability-one by choosing affine codimension-$d$ linear spaces $H_1$ and $H_2$ that are generic with respect to $V_{\mathbb C}$.
\end{remark} %}

%Since $V_{\mathbb C}$ does not lie in the hyperplane $\{x_i - \alpha=0\}$, there are only a finite number of points in $V_{\mathbb C}$ with $i$th coordinate equal to $\alpha$.  %Now we can select $H_2$ such that it avoids each of these points, indeed, 
%The set of affine linear spaces that intersect any given point is an algebraic set, thus, since $H_2$ is generic, it will avoid the points in in $V_{\mathbb C}$ with $i$th coordinate equal to $\alpha$.  Thus, $ V_{\mathbb C} \cap H_2$ must be empty, a contradiction.

%Notice that $H_2$ avoiding a 

%Since $x_i - \alpha = 0$ for all elements in  $V_{\mathbb C} \cap H_1$ and $q \in V_{\mathbb C} \cap H_2$, then $x_i - \alpha \in I(V_{\mathbb C} \cap H_1) = I(V_{\mathbb C}) + I(H_1)$ and $x_i - \alpha \in I(V_{\mathbb C} \cap H_1) = I(V_{\mathbb C}) + I(H_1)$.  %Due to genericity, neither $H_1$ and $H_2$ lie in the hyperplane $\{ x_i - \alpha\}$, and thus $x_i -\alpha \notin I(H_1)$ and $x_i -\alpha \notin I(H_2)$.  This implies,  $x_i - \alpha \in I(V_{\mathbb C})$, and therefore $V_{\mathbb C}$ lies in $\{x_i - \alpha\}$.

        The next subsection shows how to use numerical irreducible decompositions to detect ACR.

\begin{remark}
    Numerical irreducible decompositions can be computed using software such as {\tt Bertini} \cite{BHSW06}, {\tt PHCpack} \cite{verschelde1999algorithm}, and {\tt HomotopyContinuation.jl} \cite{breiding2018homotopycontinuation}.  
\end{remark}

\subsection{ACR via numerical irreducible decomposition} \label{sec:numerical-irreducible-decomposition}

Consider the complex variety of the steady-state ideal of a mass-action system $(G, \kappa)$ with $n$ species: 
%. This steady-state variety can be defined as 
$$V_{\C}^{G,\kappa} ~=~ V(I(G,\kappa)) ~=~  
    \{x \in \C^n \mid f_{\kappa}(x)_1 = f_{\kappa}(x)_1 = \dots f_{\kappa}(x)_n= 0
    %f_{\kappa}(x)_i = 0~ \forall~ 1 \leq i \leq n
    \}~.$$

%Recall that a numerical irreducible decomposition of $V_{\C}^{G,\kappa}$ will return a witness set collection, where each witness set has the form $\mathcal{W}_{j,l} = \{I(V_{\C}^{G,\kappa}),L_{j,l},V_{j,l}\cap V_{\C}(L_{j,l})\}$ (and $V_{j,l}\cap V_{\C}(L_{j,l})$ is a finite set of points).  
% WHAT WAS HERE WAS MOVED EARLIER

It follows from Proposition \ref{prop:two-linear-spaces} that we can detect ACR by computing two (distinct) numerical irreducible decompositions and then comparing results.  
This observation underlies the next procedure.  As the procedure requires checking equality up to an estimation error $\delta$, we refer to the ACR detected by the procedure as \emph{Numerical ACR within $\delta$}. Note though, due to estimation error, it is possible that the procedure falsely detects ACR.  

\begin{proc} \label{proc:numerical-irreducible-decomposition}
 Detecting Numerical ACR by numerical irreducible decomposition
 \end{proc}
\begin{itemize} 
    \item[Input.] A reaction system $(G, \kappa^*)$ with ideal $I(G, \kappa^*)$ generated by steady-state equations $f_{\kappa^*}(x)~=~ (f_{\kappa^*}(x)_1, f_{\kappa^*}(x)_2, \dots, f_{\kappa^*}(x)_n)$.  Numerical tolerance $\delta>0$.  
    \item[Output.] ``Numerical ACR within $\delta$''  or  ``Inconclusive.''
 \end{itemize}
 
\begin{itemize}
 \item[Step~1.] 
 	Compute two distinct numerical irreducible decompositions of $V_{\C}^{G,\kappa} $ and store witness sets in two lists, $\mathcal W$ and $\mathcal W'$.	
 \item[Step~2.] Remove witness sets that represent boundary components from $\mathcal W$ and $\mathcal W'$. In particular, for each witness set $\mathcal{W}_{j,l} = \{I(V_{\C}),L_{j,l},V_{j,l}\cap V_{\C}(L_{j,l})\}$, if there exists $1 \leq i \leq n$ such that for all points $(x_1,x_2, \ldots, x_n) \in V_{j,l}\cap V_{\C}(L_{j,l})$ it is the case that $|x_i|< \delta$, then update $\mathcal W$ by removing $\mathcal{W}_{j,l}$. Repeat the process for $\mathcal W'$. 
 \item[Step~3.]  If there exists a coordinate $x_i$ such that for all the points in $V_{j,l}\cap V_{\C}(L_{j,l})$, ranging over all $\mathcal W_{j,l} \in \mathcal W$, and for all the points in $V_{j,l}\cap V_{\C}(L'_{j,l})$, ranging over all $\mathcal W'_{j,l} \in \mathcal W'$, the absolute value of pairwise differences of the $i$th coordinates is less than $\delta$, then return ``Numerical ACR  in $X_i$ within $\delta$."
	\item[Step~4.] Otherwise, return ``Inconclusive."
 
\end{itemize}

\begin{proof}[Proof of correctness of Procedure~\ref{proc:numerical-irreducible-decomposition}]  This follows from Propositions~\ref{prop:hyperplane}
and~\ref{prop:two-linear-spaces}
and Remark ~\ref{remark:prob-one}.
\end{proof}

Procedure~\ref{proc:numerical-irreducible-decomposition} returns ``Inconclusive" when there are two points in the returned witness sets that differ at every coordinate.  In this case, since we are working over $\mathbb{C}$, we cannot preclude ACR because we do not know how each component intersects the positive real orthant.  For example, 
a numerical irreducible decomposition of the system in Example~\ref{ex:generalized-shinar--Feinberg} returns three witness sets, one for each of the three hyperplanes $\{x_B=0 \}$, $\{x_A - \sqrt{\kappa_1/\kappa_2} = 0 \}$, $\{x_A + \sqrt{\kappa_1/\kappa_2} = 0 \}$. After removing the witness set that corresponds to the hyperplane $\{x_B=0 \}$, the remaining witness points (one for each remaining hyperplane) do not agree on any coordinate, however, the corresponding system does in fact have ACR in species A.

In practice, homotopy continuation software that computes numerical irreducible decompositions use random coefficients to define the intersecting linear spaces, thus, since it cannot be guaranteed that the linear spaces are indeed generic, they are ``probability one" algorithms.

\begin{example}(Lee {\em et al.} \cite{lee2003roles} \ Wnt model)
Here we use Procedure~\ref{proc:numerical-irreducible-decomposition} 
to test whether a model of Wnt pathway has ACR. The network is displayed in the input below. 
%We begin with some biological context.  
%Signalling pathways, such as Wnt/$\beta$-catenin, are crucial to normal functioning of cells, with many diseases such as cancer caused by dysfunction of the pathway.
%Lee {\em et al.} \cite{lee2003roles} introduced the first quantitative model of Wnt/$\beta$-catenin based on data
%from Xenopus extracts.  
The model describes known interactions between core components of the canonical pathway where species can become active or inactive, denoted by a subscript $a$ or $i$, respectively.%\footnote{ 
%A more detailed description of the reactions is as follows.  First, the signalling molecule Wnt binds to $\beta$-catenin ($X$). Also, the model targets the assembly of the destruction complex ($Y_i,Y_a$) from the constituent parts of GSK3$\beta$ ($G$), APC ($A$), and Axin ($N$), which binds to individual parts of the destruction complex as well as the complete destruction complex. This process is assumed to act only in the well-mixed cytoplasm. The destruction complex can be activated by Dishevelled ($D_a$, $D_i$), which also reversibly becomes activated. The destruction complex ($Y_a$) binds with $\beta$-catenin ($X$) and  marks $\beta$-catenin for proteasomal degradation ($X_p$). The model includes the production and degradation of $\beta$-catenin in multiple different mechanisms, including one that is independent of the destruction complex. Furthermore, $\beta$-catenin binds to TCF ($T$) to promote transcription of target genes. The intermediate complexes between these species are denoted by $C_{KL}$, where $K$ and $L$ represent two species forming a complex from the species presented above.}.
 As far as we are aware, our analysis is the first testing of ACR in the Lee {\em et al.}\ Wnt model.  To do so, we use {\tt Macaulay2} \cite{M2} calling the {\tt ReactionNetworks.m2} package \cite{reactionnetworks}, the {\tt NumericalAlgebraicGeometry.m2} package \cite{leykin2011numerical}, and the following code.  In the code, we substitute the reaction rates with random rational positive values.  We use $\delta = 10^{-8}$ since that is the default error tolerance in {\tt NumericalAlgebraicGeometry.m2}.

{\footnotesize
\begin{verbatim}
RN=  reactionNetwork({"Di<-->Da", "Ya<-->Yi", "Da+Yi-->Da + G + CNA",  "G + CNA --> Yi", 
"Yi --> G + CNA", "A+N<-->CNA", "Ya+X<-->CXY", "CXY-->CYXp", "CYXp --> Xp + Ya",	"Xp-->0", 
"X<-->0", "0<-->N", "X+T<-->CXT", "X+A<-->CXA"}, NullSymbol => "0");
R = createRing(RN,QQ);
I = ideal( subRandomReactionRates RN);
S=QQ[RN.ConcentrationRates];
J = sub(I, S);
NV1 = numericalIrreducibleDecomposition(J);
Components1=components NV1;
\end{verbatim}
}

This returns to us the following numerical variety with a single witness set representing a single irreducible component of dimension 4 and degree 7:

{\footnotesize
\begin{verbatim}
i1: peek NV1
o1= NumericalVariety{4 => {(dim=4,deg=7)}}
\end{verbatim}
}

Taking a peek at the witness point of the component gives us

{\footnotesize

\begin{verbatim}
i1: (points Components1_0)_0
o1 =  {-.257725+.843632*ii, -1.23708+4.04943*ii, -.351557-.187174*ii, -2.87105-1.52859*ii, 
-.415284+3.13649*ii, -1.6121-.443075*ii, -.286596-.0787689*ii, 3, -.256615+.671232*ii, 
.479671-.417656*ii, 1.5989-1.39219*ii, 1.19918-1.04414*ii, -1.52228+3.79967*ii, 
-.493674-.456425*ii, 3.16043-4.30398*ii}
\end{verbatim}
}

We can see that there is a $3$ in the 8th entry, which corresponds to Axin ($N$). In fact, in this example, all seven witness points contains a $3$ in the 8th entry.  
A second witness point can be inspected using the following command, but we do not present the output here: 

{\footnotesize

\begin{verbatim} 
(points Components1_0)_1 \end{verbatim} 
}

%Here is a second witness point to inspect.

%\begin{verbatim}
%i1: (points Components1_0)_1
%o1 =  {-.127239-.106091*ii,  -.610746-.509235*ii, 
%-.071024-.0709828*ii, -.580029-.579693*ii, .65341+.178855*ii,
%-.538247-.022909*ii, -.0956883-.0040726*ii, 3,
%.617796+.209846*ii, -.0644063-.130571*ii, -.214688-.435237*ii, 
%-.161016-.326428*ii, .547114+1.75404*ii, -.006874+.273931*ii,
%-1.45653-.564897*ii}
%\end{verbatim}

Comparing witness points in this first decomposition suggests that we may have ACR.
%can give us a way to immediately get a sense if ACR can be detected. 
Following Procedure~\ref{proc:numerical-irreducible-decomposition}, we can confirm ACR by performing a second numerical irreducible decomposition.  This amounts to choosing a new generic linear space $V_{\C}(L_{4,1})$.  Again, we get a numerical variety of dimension 4 and degree 7. 

{\footnotesize
\begin{verbatim}
i1: NV2 = numericalIrreducibleDecomposition(J);
i2: peek NV2
o2= NumericalVariety{4 => {(dim=4,deg=7)}}
\end{verbatim} 
}

Again, every witness point has a 3 in the 8th entry; one such point is shown below.

{\footnotesize
\begin{verbatim}
i1: (points Components2_0)_1
o1 =  {-.691939+.081125*ii, -3.32131+.389401*ii, .434359+.0253714*ii, 3.54727+.2072*ii, 
-.182904-.70573*ii, .307375-3.42684*ii, .054645-.609217*ii, 3, .201325-.0071503*ii, 
.194731+.0044491*ii, .649103+.014830*ii, .486827+.0111228*ii, -3.10059+6.38549*ii,
-.132244+.29891*ii, .16613-3.07604*ii}
\end{verbatim} 
}

Since each coordinate corresponding to Axin ($N$) is  within $\delta$ for all points in both witness sets, we conclude that the system has \emph{Numerical ACR in N within $\delta = 10^{-8}$.} 
\end{example}

\begin{comment}
\begin{example}(PFK2/FBPase-2) \label{ex:PFK2}
Here we use numerical algebraic geometry approach to test whether the bifunctional enzyme 6-phosphofructo-2-kinase/fructose-2,6-bisphosphatase (PFK2/FBPase-2) pathway involved in glycolysis has ACR in any species. The reactions are from \cite{karp2012complex} with corresponding polynomials assumed to follow mass action kinetics.
$$G=\{ S5 \lra S7, \quad
	S7+S2 \lra S3, \quad S3 \to S5+S1, $$
	$$ S5+S1 \lra S6, \quad S6 \to S5+S2,
	S7+S1 \lra S8,  $$
	$$ S8 \to S7+S2, \quad
	S6 \lra S8, \quad S8 \to S7+S2,$$
        $$ S3+S1 \lra S4, \quad S8+S2 \lra S4, \quad 
	S4 \to S6+S1,\quad S4 \to S3+S2 \}.$$
	
In this case, we can use {\tt Macaulay2} calling the {\tt ReactionNetworks.m2} package \cite{reactionnetworks} and the following code to test for ACR.

\begin{verbatim}
RN = reactionNetwork({"S5<-->S7", 
	"S7+S2 <--> S3", "S3-->S5+S1", 
	"S5+S1<-->S6", "S6-->S5+S2",
	"S7+S1<-->S8", "S8-->S7+S2", 
	"S6<-->S8", "S8-->S7+S2", 
	"S3+S1 <-->S4", "S8+S2<-->S4", 
	"S4-->S6+S1", "S4-->S3+S2"});
R1 = createRing(RN,QQ);
I = ideal( subRandomReactionRates RN);
S=QQ[RN.ConcentrationRates];
J = sub(I, S);
NV = numericalIrreducibleDecomposition(J);
Components=components NV;
\end{verbatim}

This returns to us the following numerical variety with two components:
\begin{verbatim}
i1: peek NV
o1= NumericalVariety{2 => {(dim=2,deg=1), (dim=2,deg=9)}}
\end{verbatim}

Taking a peek at the witness point of the first component gives us

\begin{verbatim}
i1: (points Components_0)_0
o1 = {-1.54074e-32+1.42519e-32*ii, -6.16298e-33, .442233-.0137878*ii, 
6.16298e-33+1.34815e-33*ii, .545935+.376345*ii, -1.54074e-33,
 -2.46519e-32+3.08149e-33*ii, 5.00742e-33+1.92593e-34*ii}.
\end{verbatim}

Since the entries corresponding to $s_1, s_2, s_4, s_6, s_7, s_8$ are all zero, we can conclude that this component is a boundary component.

The second component has degree 9, and thus the witness set contains 9 points.  However, to preclude ACR, we just need to take a look at one of these points, e.g.

\begin{verbatim}
i1: (points Components_1)_1
o1 =  {-4.61408-13.757*ii, -1.49793-1.63551*ii, -18.426-3.11888*ii, 
16.0621+12.4264*ii, .189198+1.0068*ii, 3.27871-1.84319*ii, -2.81595+3.56304*ii, 
4.53955-3.34999*ii}
\end{verbatim}

Since none of the coordinates are real, we can conclude that this component does not exhibit ACR.  Therefore, the network $G$ does not have unconditional ACR.

\end{example}
\end{comment}

\subsection{ACR via numerically computing points on each real connected component}\label{subsec:real-connected-components}

%Note: one of the pros of this method is that it does not require the assumption that $\alpha$ is rational.

While finding the real connected components of a real algebraic set symbolically is challenging, we can gain information about the connected components by using numerical algebraic geometry.  In particular, using results from \cite{dufresne2019sampling, hauenstein2013numerically, seidenberg1954new}, we can find at least one point on each real connected component, and by repeating this procedure we can find multiple points on each component. In fact, we can find a provably dense set of points on each real connected component~\cite{dufresne2019sampling}. This yields a numerical algorithm for precluding ACR (Procedure \ref{proc:denseset}).

Given a reaction system $(G, \kappa)$ with ideal $I(G, \kappa)$ generated by $f_{\kappa}(x)= (f_{\kappa}(x)_1, f_{\kappa}(x)_2, \dots, f_{\kappa}(x)_n)$,  %such that $V_{\mathbb C} (I(G, \kappa))$ is a pure $d$-dimensional complex variety, 
we can find a real point on every component of $V_{\mathbb R} (I(G, \kappa))$ using an approach by Seidenberg~\cite{seidenberg1954new}. In that approach, we let $g = f_{\kappa}(x)_1^2 + f_{\kappa}(x)_2^2 + \dots + f_{\kappa}(x)_n^2$, and choose a random point $w \in \R^n$. We then solve the following system in indeterminates $x$ and $\lambda=(\lambda_0,\lambda_1)$:
\begin{align} \label{eq:realsystem1}
 g(x)~&=~0, \\
\lambda_0(x-w) + \lambda_1 \nabla g   ~&=~0~. \notag
\end{align}

For the system~\eqref{eq:realsystem1}, which is  referred to as the \emph{Fritz John optimality conditions}, the solutions will include the closest point on $V_{\mathbb R} (I(G, \kappa))$ to the point $w$.   As described in \cite{hauenstein2013numerically}, solutions to the Fritz John optimality conditions can be found through constructing and, in turn, tracking a homotopy.  For $\beta \in \mathbb C^{n-d}$, the homotopy  
we want to consider is 
\begin{equation} \label{eq:homotopy}
H_{w, \beta} (x, \lambda, t) = \left[\begin{array}{c}g(x) - t \beta \\ \lambda_0(x-w) + \lambda_1 \nabla g   =0 \end{array}\right].
\end{equation}

For generic choices of $w$ and $\beta$, the homotopy $H_{w, \beta}$ is a well-constructed homotopy~\cite[Proposition 3.2]{dufresne2019sampling}.  In practice, this means that we can find the solutions to the system in \eqref{eq:homotopy} by tracking a finite number of paths from the solutions $H_{w, \beta} (x, \lambda, 1)$ at $t=1$ to the target solutions $H_{w, \beta} (x, \lambda, 0)$ at $t=0$, 
which are solutions of~\eqref{eq:realsystem1}. 
Such homotopies can be tracked using polynomial homotopy continuation solvers {\tt Bertini} \cite{BHSW06}, {\tt PHCpack} \cite{verschelde1999algorithm}, and {\tt HomotopyContinuation.jl} \cite{breiding2018homotopycontinuation}. %In Example \ref{}, we use the {\tt NumericalAlgebraicGeometry} \cite{} package in {\tt Macaulay2}. 
%Finally, since we used a randomization matrix $A$ in \ref{} to ensure the number of equations in \ref{} equals codim$(V_{\mathbb C} (I(G, \kappa)))$, some of the solutions returned (after projecting onto the $x$ coordinates) may not belong to $(V_{\mathbb C} (I(G, \kappa)))$, and thus  need to be filtered. The solutions can be filtered using a membership test (see \cite{}), such tests are built into most polynomial homotopy continuation solvers.  Testing membership requires a numerical irreducible decomposition of $(V_{\mathbb C} (I(G, \kappa)))$ in terms of witness sets.

%This is a simplified version of the system in Theorem 5 from \cite{hauenstReal}, which in turn is rooted in ideas from \cite{Seidenberg:1954}.  While polynomial systems can be solved symbolically using Gr\"{o}bner bases, numerical algebraic geometry offers probability-one algorithms that can handle larger systems.  Thus, our procedure below relies on numerical algebraic geometry solvers such as {\tt Bertini} and {\tt PHCpack}, which have interfaces in several computer algebra systems, e.g.  {\tt MATLAB}, {\tt Maple}, {\tt SageMath}, and {\tt Macaluay2}.

Repeatedly choosing pairs $(w, \beta)$, tracking the resulting homotopy~\eqref{eq:homotopy}, and projecting target solutions at $t=0$ onto the $x$-coordinates gives us a way to find multiple points on each component.  Moreover, using Algorithm 4.2 in \cite{dufresne2019sampling}, %which we will refer to as  \\ {\tt DenseSamplingAlgorithm}, 
we can find a dense sample of points on $V_{\mathbb R} (I(G, \kappa))$ in a given region, 
as follows. 

\begin{lemma} \label{lem:dense-sampling}
There is an algorithm, which we call {\tt DenseSamplingAlgorithm}, with the following input and output:
\begin{itemize}
	\item {\sc Input}: A set of real polynomials $f=(f_1,f_2,\dots, f_k)$ on $n$ variables, a box $R = [a_1, b_1] \times \cdots \times [a_n, b_n]$, a sampling density $\epsilon>0$, and 
 an estimation error $0 \leq \delta < \epsilon$.
	\item {\sc Output}: A list $A$ of points that form an $(\epsilon, \delta)$-sample of $V_{\mathbb R} (f) \cap R$.
\end{itemize}
\end{lemma}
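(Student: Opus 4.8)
The plan is essentially to assemble the algorithm from the ingredients already laid out above~\eqref{eq:realsystem1}--\eqref{eq:homotopy} and then to invoke the density guarantee of~\cite{dufresne2019sampling}, so the proof is largely a matter of bookkeeping and of checking that our polynomials meet the hypotheses of that reference. First I would make precise the target notion: following~\cite{dufresne2019sampling}, a finite set $A \subseteq \R^n$ is an \emph{$(\epsilon,\delta)$-sample} of $V_{\mathbb R}(f) \cap R$ if every point of $A$ lies within distance $\delta$ of $V_{\mathbb R}(f)$ and every point of $V_{\mathbb R}(f) \cap R$ lies within distance $\epsilon$ of some point of $A$. The slack parameter $\delta$ is what allows numerically computed points (which satisfy $f \approx 0$ only up to path-tracking error) to be accepted as valid output, while $\epsilon$ quantifies how dense the sample must be; the constraint $\delta < \epsilon$ ensures these two requirements are compatible.

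Second, I would identify the atomic operation as the nearest-point computation described around~\eqref{eq:realsystem1}--\eqref{eq:homotopy}: for a random query point $w \in \R^n$ and a generic parameter $\beta$, track the homotopy $H_{w,\beta}$, which is well-constructed by~\cite[Proposition 3.2]{dufresne2019sampling}, and project the target solutions at $t=0$ onto their $x$-coordinates. By the Fritz John optimality conditions (cf.~\cite{seidenberg1954new, hauenstein2013numerically}), the resulting finite set contains, for \emph{each} real connected component of $V_{\mathbb R}(f)$, a critical point of the squared-distance function to $w$ restricted to that component, and in particular the point of that component nearest to $w$. Hence one call already meets every real connected component. Repeating the call for many $(w,\beta)$ and accumulating the outputs produces more and more points on each component.

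Third, I would invoke Algorithm~4.2 of~\cite{dufresne2019sampling}, which organizes these repeated nearest-point queries over a systematically refined set of query points inside the box $R$ and terminates once the accumulated list is certified to be $\epsilon$-dense on $V_{\mathbb R}(f) \cap R$, with $\delta$ inherited from the tracking tolerance. Feeding our data $f = (f_1, f_2, \dots, f_k)$, the box $R$, the density $\epsilon$, and the error bound $\delta$ into that algorithm yields the desired list $A$; since the $f_i$ here are arbitrary real (indeed rational) polynomials, the mild hypotheses of~\cite{dufresne2019sampling} are satisfied, and the lemma follows.

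The step I expect to be the main obstacle — and the one at which the argument genuinely relies on~\cite{dufresne2019sampling} rather than on anything re-derived here — is twofold: (i) guaranteeing, with probability one over the random choices of $w$ and $\beta$, that all the homotopies are well-constructed so that no connected component is ever missed; and (ii) the covering/compactness bookkeeping that turns finitely many nearest-point queries into a provable $\epsilon$-net of $V_{\mathbb R}(f) \cap R$, together with the stability of the resulting density certificate under the numerical perturbation of size $\delta$. Both are established in~\cite{dufresne2019sampling}, so in the write-up I would state them as cited facts and confine the new content to verifying the hypotheses for our polynomial systems.
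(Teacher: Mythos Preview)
Your proposal is correct and aligns with the paper's approach: the paper does not give a standalone proof of this lemma at all, but simply states it as a repackaging of Algorithm~4.2 in~\cite{dufresne2019sampling}, preceded by the same discussion of the Fritz John conditions and the homotopy~\eqref{eq:homotopy} that you reference. Your write-up is, if anything, more detailed than the paper's treatment, since you spell out the two ingredients (probability-one well-constructedness and the covering argument) that the paper leaves entirely to the citation.
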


%The inputs to {\tt DenseSamplingAlgorithm} are a set of polynomials $f$ on $n$ variables, a box $R = [a_1, b_1] \times \cdots \times [a_n, b_n]$, a sampling density $\epsilon>0$, and estimation error $0 \leq \delta < \epsilon$.  The output is a list of points $A$ that forms an $(\epsilon, \delta)$-sample of $V_{\mathbb R} (f) \cap R$.  
Here, an 
{\em $(\epsilon, \delta)$-sample} means (i) every point in $A$ is within $\delta$ of a point in $V_{\mathbb R} (f) \cap R$, and (ii) every point in $V_{\mathbb R} (f) \cap R$ is within $\epsilon$ of a point in $A$.

The next procedure shows how to numerically preclude ACR using {\tt DenseSamplingAlgorithm}.  This method can also be used to gain a better understanding of the components of $V_{\mathbb R} (\sqrt[>0]{I(G, \kappa)})$.  
\begin{proc}
   \label{proc:denseset} 
 Numerically precluding ACR by computing points on each real connected component 
 \end{proc}
\begin{itemize}
 \item[Input.] A reaction system $(G, \kappa^*)$ with ideal $I(G, \kappa^*)$ generated by steady-state equations $f_{\kappa^*}(x)~=~ (f_{\kappa^*}(x)_1, f_{\kappa^*}(x)_2, \dots, f_{\kappa^*}(x)_n)$, a box $R = [a_1, b_1] \times \cdots \times [a_n, b_n]$ with $0<a_i<b_i$, a sampling density $\epsilon$, and an estimation error $\delta$  with $0 \leq \delta < \epsilon$.

\item[Output.] ``No Numerical ACR" or ``Inconclusive"  \end{itemize}

\begin{itemize}

%\item[Step~1.] Compute a numerical irreducible decomposition of $(V_{\mathbb C} (I(G, \kappa)))$ {\color{blue} or $V(Af)$}.
 \item[Step~1.] Let $Sols$ be the output of {\tt DenseSamplingAlgorithm}[$f_{\kappa}(x), R, \epsilon, \delta$].

%\item[Step~2.] Update $Sols$ by removing all points 
    %$(x_1=a_1+b_1i, ~x_2=a_2+b_2i, ~\ldots,~ x_n=a_n+b_ni)$ such that $|b_i|>\delta$ for some $i=1, \ldots, n$.

%\item[Step~3.] Update $Sols$ by removing all points 
%$(x_1, x_2, \ldots, x_n)$ where $x_i< \delta$ for some $i=1, \ldots, n$. {\color{brown} [Do we want to write $|x_i|<\delta$?]}

 \item[Step~2.] If 
 $\lvert Sols \rvert=0$ or $1$, or if 
 there exists a coordinate $x_i$ such that for all the points in $Sols$ the absolute value of pairwise differences of the $i$th coordinates is less than $\delta$, then return ``Inconclusive".
 \item[Step~3.] Else, return ``No Numerical ACR".
\end{itemize}

\begin{proof}[Proof of correctness of Procedure~\ref{proc:denseset}]  The correctness of the procedure follows from \cite{dufresne2019sampling}, Corollary \ref{cor:ACR-real-components}, and the fact that the box $R$ is contained in the positive orthant.
\end{proof}

 Procedure~\ref{proc:denseset} computes
a sample of $V_{\mathbb R} (\sqrt[>0]{I(G, \kappa)})$ that is dense when intersected with a box. Yet, to preclude ACR, it is sometimes enough to sample only two sets of points.

\begin{example}\label{ex:sampling-real-points}
Here we consider the  one-site distributive phosphorylation cycle:

%S_0+E-->X
 %    X-->S_0+E
%     X-->E+S_1
%     S_1+F-->Y
 %    Y-->S_1+F
 %    Y-->S_0+F
     
\[
G~=~\left\{ 
	S_{0}+ E \overset{\kappa_1}{\underset{\kappa_2} \rla } X \overset{\kappa_3} \rightarrow
	S_1 + E,~  S_{0}+ F\overset{\kappa_4}{\underset{\kappa_5} \rla } Y \overset{\kappa_6} \rightarrow
	S_1 + F 
 \right\}~.
\]
We let $x_1, x_2, x_3, x_4, x_5,$ and $x_6$ represent the concentrations of $S_0$, $E$, $X$, $S_1$, $F$, $Y$, respectively.

We pick two random points  
$w_1=\left( \frac{2}{3}, 6, \frac{2}{7}, \frac{1}{2}, 2, \frac{5}{2}\right) $ and $w_2=\left( \frac{3}{4}, \frac{1}{3}, \frac{7}{6}, \frac{5}{3}, \frac{3}{4}, \frac{5}{3}\right)$.  Setting up and tracking the homotopy in equation \eqref{eq:homotopy} twice returns the following two sets of real solutions: 
{\small
\begin{align*}
Sols_1 ~&=~
        \{ 
        (1.35308, 1.08525, .734218, 1.58447, .273065, .262221), 
        \\
        & \quad \quad \quad 
        (-.115201, -.190334, .0109633, .0497815, .129778, .00391547) \}
        \\
Sols_2 ~&=~ 
    \{(.390274, 5.98197, 1.1673, .348599, 1.97325, .416894), 
        \\
        & \quad  \quad \quad    
    (.617545, .00506007, .00156241, -.0576906, -.0159594, .000558004) \}~.
\end{align*}
}
Each set contains one positive solution.  The positive solutions differ in every coordinate, thus, we can numerically preclude ACR for the one-site distributive phosphorylation cycle. {\tt Macaulay2} code for this example can be found in Appendix \ref{sec:sampling-real-points}.
\end{example}

% When dealing with ACR, there are linear algebra conditions to be checked .
%From the point of view from computational algebraic geometry, to work with ideas,...

\section{Discussion} 
\label{sec:disc}

This paper gives an
overview into the algebra and geometry of absolute concentration robustness. 
 While at first glance, the property of ACR seems like it should be straightforward to check algorithmically, this paper  highlights the subtleties involved.  Indeed, the problem of detecting ACR ends up to be a nuanced problem in real algebraic geometry: \emph{Does every positive real component of a complex algebraic variety lie in a hyperplane of the form $\{ x_i - \alpha$\}? }

 The motivation for this study is biochemical reaction networks; however, there may be other applications that distill down to the same question.  Thus, this manuscript can serve as guide in those cases.  For example, there are several techniques that would be good first steps, while others would be good  second steps.  Other techniques would be valuable for in-depth case studies of  parameterized polynomial systems.

The easiest to implement approaches in the setting of biochemical reaction networks are those centered on graph theoretic and linear algebra tests.  For example, Shinar and Feinberg give a combinatorial condition on the graph when the reaction network has deficiency one, and they show that ACR is impossible when the network has deficiency zero \cite{shinar2010structural}.  A next approach would be those of \cite{beatriz-elisenda} and \cite{elisenda-oskar-beatriz}, discussed in Remarks \ref{rem:local-acar} and \ref{rem:hyperplane-check}, that give a necessary linear algebra condition for ACR, and thus, can be used to quickly preclude ACR.

 From the point-of-view of computational algebraic geometry, given a reaction system $(G, \kappa)$ with steady-state ideal $I$, the first step in checking for ACR would be through previously suggested methods as captured in Proposition \ref{prop:sufficient-conditions-ACR-using-steady-state-ideal} and Remark \ref{rem:gb}. In particular, we suggest to first compute a Gr\"{o}bner basis of $I$ and check for an element of the form $x_i - \alpha$.  In most examples, this was enough for us to detect ACR.  If that doesn't work, then we suggest computing the saturation $I: (x_1x_2 \cdots x_n)^ \infty$ and checking for an element of the form $x_i - \alpha$.  Then, if that doesn't work, compute each elimination ideal $I \cap \mathbb Q[x_i]$ and check whether there is a generator $g$ of any of the elimination ideals such that $g$ has a unique positive root.  While these three computations constitute reasonable first steps, as we saw in Example~\ref{ex:wrong2}, none of the conditions in Proposition \ref{prop:sufficient-conditions-ACR-using-steady-state-ideal} are necessary conditions for ACR.  Indeed, this point is the main motivation for this paper.
 
 If the methods suggested by Proposition \ref{prop:sufficient-conditions-ACR-using-steady-state-ideal} fail, either due to inconclusive results or computational limitations, the next steps would be those proposed in Sections \ref{sec:ideal_decomp}--\ref{sec:algorithm}.  If the system is small, then a next step could be Procedure \ref{prod:ideal-decomposition}, which relies on computing a decomposition of the positive-restriction ideal and successively  augmenting the ideals with Jacobian minors. However, it may be that the system is too large to effectively compute decompositions in a reasonable time.  In this case, or in the case that we have irrational rate constants $\kappa$ or we expect irrational ACR-value $\alpha$, then we suggest numerically checking for ACR through a numerical irreducible decomposition (Procedure \ref{proc:numerical-irreducible-decomposition}).  For those looking for a more complete algebraic or geometric description of a specific network in respect to ACR, then we suggest Algorithm \ref{Algorithm}, which examines the leading coefficients of  generators of elimination ideals obtained through Gr\"{o}bner bases, as well as Procedure~\ref{proc:numerical-irreducible-decomposition}, which returns a dense sample of each real component intersected with the positive orthant. 

 % While we envision the techniques described in this paper being used more generally, there are some techniques that are uniquely suited to chemical reaction networks due to the wealth of existing literature.  For chemical reaction networks, we suggest starting with the techniques in Section~\ref{sec:structure} before going on to analysis using computational algebraic geometry.  For example, one-species networks are fully classified in \cite{meshkat-shiu-torres}. For chemical reaction networks with more than one species, then we suggest using Proposition \ref{prop:no-cons-law} first.  In particular, if it is known \emph{a priori} that the network is not multistationary and the stochiometric subspace is $\mathbb R^n$, where $n$ is the number of species, then ACR follows. After that, we suggest using Proposition \ref{prop:param}.  In this case, there are several network-based results that can be used to establish the existence of particular types of positive steady-state parameterizations, such as monomial and rational parameterizations, and to then construct them \cite{J-M-P,TSS,Mller2014GeneralizedMS}.  While Section~\ref{sec:structure} is based on known results, we are hoping that collecting them together before getting into the results of later sections helps those looking for a complete guide to the state-of-the-art regarding ACR.

 Finally, Section~\ref{sec:specialized-GB} raises some interesting questions related to computational algebraic geometry and ACR.  
 In particular, we asked whether zero-divisor ACR can always be detected from a Gr\"obner basis with respect to an  elimination order
 (Conjecture~\ref{conj:algorithm}).  
 An affirmative answer would certify that Algorithm~\ref{Algorithm} always succeeds in detecting zero-divisor ACR, thereby strengthening the connection between ACR and computational algebraic geometry.

%From the point of view of the chemical reaction networks, first steps may be the ones discussed in Section 

\begin{comment}
 \begin{itemize}
 \item Not multistationary + full stochiometric  subspace, detect ACR
\item Steady-state parameterizations, detect ACR
\item One-species networks (already classified), combinatorial condition/not multistationary, for rational coefficients property can be checked usin Sturm sequences
\item positive-restriction ideal (assumes rational coefficients).  Can be used to detect ACR.
\item non-singular ACR 
\item minors of Jacobian matrix (requires rational coefficients) -- need to be able to check whether each ideal contains a non-singular point
\item numerical, irreducible decomposition
\item sampling from real components
\item Zero-divisor ACR
 \end{itemize}
 \end{comment}
 %To be completed (give some practical suggestions for a researcher wanting to check ACR -- what should they try first?).
 
 %*******************************************************************
%Acknowledgements
%*******************************************************************
\subsection*{Acknowledgements}
{\small
This project began at a SQuaRE (Structured Quartet Research Ensemble) at the American Instiute of Mathematics (AIM), and the authors thank AIM for providing financial support and an excellent working environment. 
The authors are grateful to Helen Byrne, 
Antonio Montes, 
Daniel Perrucci, 
Yue Ren, 
Marie-Françoise Roy, 
Frank Sottile, and Mike Stillman for helpful discussions.
The authors thank Alicia Dickenstein 
for inspiring some of the ideas that appear in Section~\ref{sec:comp_considerations} and Appendix~\ref{sec:cpx-acr-appendix}.
EG was supported by the NSF (DMS-1945584). 
Part of this research took place while EG was visiting the Institute for Mathematical and Statistical Innovation (IMSI), which is supported by the NSF (Grant No. DMS-1929348).
NM was partially supported by the Clare Boothe Luce Program from the Henry Luce Foundation. 
MPM was partially supported by UBACYT 20020220200166BA and CONICET PIP 11220200100182CO. 
AS was partially supported by the 
NSF (DMS-1752672) and the 
Simons Foundation (\#521874). HAH was supported by UK Centre for Topological Data Analysis EPSRC grant EP/R018472/1. and funding from the Royal Society RGF\textbackslash{}EA\textbackslash{}201074 and UF150238. For the purpose of Open Access, the authors have applied a CC BY public copyright licence to any Author Accepted Manuscript (AAM) version arising from this submission.  
The authors thank several reviewers whose detailed suggestions helped improve this work.
}

\appendix

\section{The Real Nullstellensatz for rational coefficients} \label{sec:appendix-radical}

The Real Nullstellensatz (Proposition~\ref{prop:real-nullstellensatz}) is well known, but we did not readily find the version we need in the literature.  Accordingly, the aim of this appendix is to show how Proposition~\ref{prop:real-nullstellensatz} follows from a result in~\cite{lombardi-perrucci-roy}.

The authors of~\cite{lombardi-perrucci-roy} considered an ordered field $\mathbf{K}$ and a real closed extension of $\mathbf{K}$, denoted by $\mathbf{R}$ (for instance, $\mathbf{K}=\mathbb{Q}$ and $\mathbf{R} = \mathbb R$). 
Next, they defined the following subset of $\mathbf{K}[x_1,x_2,\dots, x_n]$:
\[\mathcal{N}(\varnothing) ~:=~ 
\left\{
    \sum_{i=1}^m\omega_iV_i^2 ~|~ 
     m \in \mathbb{Z}_{\geq 0},~
    \omega_1, \omega_2,\dots, \omega_m
    \in\mathbf{K}_{>0},\, V_1,V_2,\dots, V_m \in \mathbf{K}[x_1, x_2,\dots, x_n] 
    \right\}~.
    \]
With this notation, the following result is called the Real Nullstellensatz in~\cite{lombardi-perrucci-roy}:

\begin{proposition}[Theorem~1.11 in~\cite{lombardi-perrucci-roy}]\label{thm:Real_Radical}
    Let $Q,P_1,P_2,\dots, P_s\subseteq \mathbf{K}[x_1,x_2,\dots, x_n]$, and let $J$ denote the ideal in $\mathbf{K}[x_1,x_2,\dots,x_n]$ generated by $P_1,  P_2, \dots, P_s$. If $Q$ vanishes on the common zero set of $P_1, P_2, \dots,P_s$ in $\mathbf{R}^n$, then there exist $e \in \mathbb{Z}_{>0}$, $N\in \mathcal{N}(\varnothing)$, and $Z \in J$ such that 
    \[Q^{2e}+N=Z~.\]
\end{proposition}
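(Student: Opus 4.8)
The plan is to prove Proposition~\ref{thm:Real_Radical} by contraposition, turning the \emph{vanishing} hypothesis on $Q$ into the \emph{existence} of the algebraic certificate $Q^{2e}+N=Z$. So I would assume that no such identity holds --- that is, $Q^{2e}+N\notin J$ for every $e\in\mathbb{Z}_{>0}$ and every $N\in\mathcal{N}(\varnothing)$ --- and aim to produce a point $\xi\in\mathbf{R}^n$ at which $P_1,\dots,P_s$ all vanish while $Q(\xi)\neq 0$, contradicting the assumption that $Q$ vanishes on their common zero set. The first move is to recast the standing assumption ring-theoretically. Write $T:=\mathcal{N}(\varnothing)$ for the smallest preordering of $\mathbf{K}[x_1,x_2,\dots,x_n]$ (elements manifestly nonnegative on all of $\mathbf{R}^n$, namely weighted sums of squares), and let $M:=\{1,Q^2,Q^4,\dots\}$ be the multiplicative monoid generated by $Q^2$. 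After disposing of the degenerate case $-1\in T+J$ --- which forces $V_{\mathbf{R}}(J)=\varnothing$ and, multiplying a certificate for $-1$ by $Q^2$, immediately yields an identity of the required form --- the standing assumption says precisely that $(-M)\cap(T+J)=\varnothing$. This is the exact separation hypothesis needed to launch the Artin--Schreier machinery.

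The heart of the argument --- and the step I expect to be the main obstacle --- is the construction of a \emph{prime cone} compatible with this data. Using Zorn's lemma, I would enlarge $T$ to a preordering $\widetilde{T}$ that is maximal among preorderings containing $T$, containing every $P_i$ in its support, and still avoiding $-M$. The classical maximality lemma then shows $\widetilde{T}$ is a prime cone: its support $\mathfrak{p}:=\widetilde{T}\cap(-\widetilde{T})$ is a prime ideal containing $J$ (hence every $P_i$) but \emph{not} containing $Q$, and $\widetilde{T}$ induces a total order on the residue field $k:=\operatorname{Frac}\bigl(\mathbf{K}[x_1,x_2,\dots,x_n]/\mathfrak{p}\bigr)$ extending the order of $\mathbf{K}$. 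Verifying primeness of the maximal preordering, and that its support excludes $Q$, is the delicate part; it rests on the fact that a preordering maximal with respect to excluding a fixed element is automatically an ordering with prime support.

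With the ordered residue field $k$ in hand, I would pass to its real closure $k^{\mathrm{rc}}$, an ordered field extension of $\mathbf{K}$. Letting $\xi_i$ denote the image of $x_i$ in $k^{\mathrm{rc}}$, we obtain $\xi=(\xi_1,\dots,\xi_n)\in(k^{\mathrm{rc}})^n$ with $P_i(\xi)=0$ for all $i$ (since $P_i\in\mathfrak{p}$) and $Q(\xi)\neq 0$ (since $Q\notin\mathfrak{p}$). Thus the existential sentence $\exists x\,(P_1(x)=0\wedge\dots\wedge P_s(x)=0\wedge Q(x)\neq 0)$, whose coefficients lie in $\mathbf{K}$, holds in the real closed field $k^{\mathrm{rc}}$. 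The final step invokes Tarski's transfer principle: since both $\mathbf{R}$ and $k^{\mathrm{rc}}$ are real closed fields extending the ordered field $\mathbf{K}$, and the theory of real closed fields is model complete (so the real closure of $\mathbf{K}$ embeds order-preservingly in each, and existential sentences with parameters in $\mathbf{K}$ transfer), the sentence holds in $\mathbf{R}$ as well. This yields the desired $\xi\in\mathbf{R}^n$ and completes the contradiction.

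A remark on scope: the argument above establishes only the \emph{existence} of the certificate. The sharper content of Theorem~1.11 in~\cite{lombardi-perrucci-roy} is that $e$, the number of summands in $N$, and the degrees involved admit explicit recursive bounds. Obtaining such bounds requires replacing the non-constructive Zorn and transfer steps by an explicit induction on the number of variables together with quantitative real-root counting, which I would treat as a separate and considerably more technical undertaking; for the purposes of the present appendix, the existence statement suffices.
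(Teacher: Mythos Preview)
The paper does not prove this proposition at all: it is stated as a direct citation of Theorem~1.11 in~\cite{lombardi-perrucci-roy} and is then used as a black box to derive Proposition~\ref{prop:real-nullstellensatz}. So there is no ``paper's own proof'' to compare against.

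That said, your sketch is a correct outline of the classical non-constructive proof of the Real Nullstellensatz via prime cones and Tarski transfer, and you rightly flag at the end that the reference~\cite{lombardi-perrucci-roy} is really about \emph{effective} bounds, which your Zorn-based argument cannot deliver. Since the appendix only needs the existence statement to feed into the proof of Proposition~\ref{prop:real-nullstellensatz}, your version would serve that purpose; but it goes well beyond what the paper itself does, which is simply to quote the result.
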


Now we use Proposition~\ref{thm:Real_Radical} (specifically, the case of   $\mathbf{K}=\mathbb{Q}$) to prove Proposition~\ref{prop:real-nullstellensatz}.

\begin{proof}[Proof of Proposition~\ref{prop:real-nullstellensatz}]

Let $J$ be an ideal of $\mathbb{Q}[x_1,x_2,\dots, x_n]$.  We must show that $	\sqrt[{\mathbb R}]{J}$ (the real radical of $J$) equals the vanishing ideal of $V_{\R}(J)$ (the real variety of $J$).  

For the first containment, let $P\in\sqrt[{\mathbb R}]{J}$ and let $x\in V_{\R}(J)$. We must show that $P(x)=0$.
As  $P\in\sqrt[{\mathbb R}]{J}$, there exist $m,\ell\in \mathbb{Z}_{\geq 0}$ and $h_1,h_2, \dots, h_\ell \in \Q[x_1, x_2, \dots, x_n]$ such that  $P^{2m}+\sum_{i=1}^\ell h_i^2 \in J$.  Thus, as $x\in V_{\R}(J)$, we have the following sum of squares:
 \[ (P(x)) ^{2m} + \sum_{i=1}^\ell (h_i(x))^2~=~0~.\]
We conclude, as desired, that $P(x)=0$.

    For the reverse containment, consider the ordered field $\mathbf{K}=\Q$. Every positive rational number is a sum of squares, as follows:
    \[ \frac{p}{q}
    ~=~p\cdot q\cdot \frac{1}{q^2}
    ~=~
    \underset{p\cdot q \text{ times}}{\underbrace{\frac{1}{q^2}+\dots +\frac{1}{q^2}}}~,
    \]
    for $p,q\in\Z_{> 0}$. 
    This fact readily implies that $\mathcal{N}(\varnothing)$ consists of sums of squares, as follows: 
    $\mathcal{N}(\varnothing)=\{\sum_{i=1}^m V_i^2 ~|~  V_i\in \Q[x_1,x_2,\dots, x_n] \}$. 
    This property of $\mathcal{N}(\varnothing)$, together with Proposition~\ref{thm:Real_Radical} (applied to  $\mathbf{K}=\mathbb{Q}$ and $\mathbf{R} = \mathbb R$), imply the following: for every $P$ in the vanishing ideal of $V_{\R}(J)$, 
    there exist
    $e,m \in \mathbb{Z}_{> 0}$ and $V_1,V_2,\dots, V_m \in \Q[x_1, x_2, \dots, x_n]$
     such that 
    the sum $P^{2e}+\sum_{i=1}^m V_i^2$ is in $J$,  that is, $P\in\sqrt[{\mathbb R}]{J}$.  Hence, the ideal of $V_{\R}(J)$ is contained in $\sqrt[{\mathbb R}]{J}$.  
\end{proof}

\section{Complex-number ACR} \label{sec:cpx-acr-appendix}

In this appendix, we widen our scope to the (algebraically closed field of the) complex numbers $\C$, where in general we lose chemical significance, but gain much from the theoretical and computational point of view.  Specifically, we introduce the notion of \emph{complex-number absolute concentration robustness (CACR)}, 
which -- in contrast to ACR -- can be readily detected from the steady-state ideal (Proposition~\ref{prop:sat} and Algorithm~\ref{algo:CACR}).  
%and then present Algorithm~\ref{algo:CACR} to detect ACR from CACR. 

In short, CACR 
%Let us start by introducing a generalization of the notion of ACR. It simply 
generalizes the notion of ACR, by 
broadening the consideration from all positive zeros to all complex zeros with nonzero coordinates. 

\begin{definition}[CACR for mass-action systems] \label{def:CACR}
A mass-action system $( G, \kappa)$ with $n$ species and mass-action ODEs as in~\eqref{eq:ODE-mass-action}, has:
	\begin{enumerate}
	\item  \emph{complex-number absolute concentration robustness (CACR) in species $X_i$} if, for every $x^* \in (\C ^*)^n$ satisfying 
$f_{\kappa}(x^*)_1=f_{\kappa}(x^*)_2=\dots=f_{\kappa}(x^*)_n=0$, the value of $x_i$ is the same. 
This value of $x_i$ is the {\em CACR-value}.  
	\item {\em vacuous CACR} if there does {\bf not} exist $x^* \in (\C ^*)^n$ satisfying 
$f_{\kappa}(x^*)_1=f_{\kappa}(x^*)_2=\dots=f_{\kappa}(x^*)_n=0$.
	\end{enumerate}
\end{definition} 

As we do for ACR, we say that $(G,\kappa)$ has CACR if it has CACR in some species.
% We also generalize the CACR property for networks.
It is immediate from the relevant definitions that CACR implies (possibly vacuous) ACR. %is immediate from definitions.

\begin{proposition}[CACR implies ACR] \label{prop:cacr-vs-acr}
If a mass-action system $( G, \kappa)$ has CACR in some species $X_i$, then the system also has ACR in $X_i$.  
	Moreover:
	\begin{enumerate}
		\item If  $( G, \kappa)$ has non-vacuous CACR in $X_i$ with CACR-value a positive real number, then $( G, \kappa)$ has (possibly vacuous) ACR in $X_i$.
		\item  If  $( G, \kappa)$ has vacuous CACR or has non-vacuous CACR in $X_i$ with CACR-value that is {\bf not} a positive real number, then  $( G, \kappa)$ has vacuous ACR.
	\end{enumerate}
\end{proposition}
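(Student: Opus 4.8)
The plan is to unwind the definitions of CACR and ACR and simply check the three claims by tracking where the common zero sets live. The key observation is that the set of positive steady states $V_{>0}(I(G,\kappa))$ is a subset of the set $Z := \{x^* \in (\C^*)^n : f_\kappa(x^*)_1 = \dots = f_\kappa(x^*)_n = 0\}$ of complex steady states with all coordinates nonzero, since any positive real vector has, in particular, nonzero coordinates and satisfies the same polynomial equations. This containment is the engine behind every part of the statement.

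First I would prove the main (unnumbered) assertion: if $(G,\kappa)$ has CACR in $X_i$, it has ACR in $X_i$. Suppose the CACR-value is $\gamma \in \C$, so $x_i^* = \gamma$ for every $x^* \in Z$. If $V_{>0}(I(G,\kappa)) = \varnothing$, the system has vacuous ACR in $X_i$ (Definition~\ref{def:vacuous}), and vacuous ACR is a special case of ACR by convention, so we are done. Otherwise, pick any positive steady state $x^*$; since $x^* \in Z$, we get $x_i^* = \gamma$, and in particular $\gamma \in \R_{>0}$. Hence \emph{every} positive steady state has the same $i$th coordinate $\gamma > 0$, which is exactly ACR in $X_i$.

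Next I would handle the two enumerated refinements, which are really just bookkeeping on top of the above. For (1): assume non-vacuous CACR in $X_i$ with CACR-value $\gamma \in \R_{>0}$. Again using $V_{>0}(I(G,\kappa)) \subseteq Z$, every positive steady state has $i$th coordinate $\gamma$; if there are no positive steady states this is vacuous ACR, and if there are, it is non-vacuous ACR with ACR-value $\gamma$ — either way ``(possibly vacuous) ACR in $X_i$.'' For (2): if $(G,\kappa)$ has vacuous CACR, then $Z = \varnothing$, hence $V_{>0}(I(G,\kappa)) \subseteq Z = \varnothing$, so the system has no positive steady states and therefore vacuous ACR. If instead $(G,\kappa)$ has non-vacuous CACR in $X_i$ with CACR-value $\gamma \notin \R_{>0}$, then for every positive steady state $x^*$ we would have $x_i^* = \gamma$, which is impossible since $x_i^* > 0$ is real; hence there are no positive steady states, i.e.\ vacuous ACR.

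I do not anticipate a genuine obstacle here — the proof is a short chain of set inclusions and case splits. The only point requiring a little care is the convention that ``vacuous ACR'' counts as ``ACR'' (so that the bare implication ``CACR $\Rightarrow$ ACR'' is literally true even when there are no positive steady states), and making sure the phrase ``(possibly vacuous) ACR'' in the statement is read accordingly; I would state this explicitly so the logic of each case is unambiguous. No computation is needed beyond invoking Definition~\ref{def:vacuous}, Definition~\ref{def:CACR}, and the elementary containment $V_{>0}(I(G,\kappa)) \subseteq Z$.
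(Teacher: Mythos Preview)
Your argument is correct. The paper actually does not supply a proof of this proposition at all; it simply states, just before the proposition, that ``it is immediate from the relevant definitions that CACR implies (possibly vacuous) ACR'' and leaves it at that. Your write-up is exactly the routine unwinding of Definitions~\ref{def:ACR}, \ref{def:vacuous}, and~\ref{def:CACR} via the containment $V_{>0}(I(G,\kappa)) \subseteq Z$ that the paper has in mind, so there is nothing to compare.
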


\begin{example}%[Example~\ref{ex:no-cons-law}, continued] \label{ex:no-cons-law-2}
Consider the following network:
\[
G~=~\left\{ A+B  \overset{\kappa_1}{\underset{\kappa_2}\lra}
B \overset{\kappa_3}{\underset{\kappa_4}\lra} 0~, ~ 
2B \overset{\kappa_5}{\underset{\kappa_6}\lra} 3B
\right\}~.
\]
It is straightforward to check that $G$ has no conservation laws and that for any positive steady state $(x_A^*,x_B^*)$, we have $x_A^*=\kappa_2/\kappa_1$.  Thus, $G$ has ACR in species $A$.  
Also, $x_B^*$ can take up to three positive-steady-state values.  For instance, if $\kappa_3=11$, $\kappa_4=\kappa_5=6$, and $\kappa_6=1$, then $x_B^*$ takes three values (namely, $x_B^*=1$, 2, and 3), yielding three positive steady states $(x_A^*=\kappa_2/\kappa_1,~x_B^*)$. Hence, $G$ is multistationary and does not have ACR in $B$.
% Recall that every mass-action system generated by the following network has ACR in species $A$ (with ACR-value $a^*=\kappa_2/\kappa_1$):
% \[
% G~=~\left\{ A+B  \overset{\kappa_1}{\underset{\kappa_2}\lra}
% B \overset{\kappa_3}{\underset{\kappa_4}\lra} 0~, ~ 
% 2B \overset{\kappa_5}{\underset{\kappa_6}\lra} 3B
%  \right\}~.
%  \]
It is straightforward to check that these systems also have CACR in species $A$ (with CACR-value $x_A^*=\kappa_2/\kappa_1$).
\end{example} 

%\begin{example}[Example~\ref{ex:ideals-network}, continued] \label{ex:ideals-network-2}
%We saw earlier that every mass-action system generated by the network $G=\{ B \overset{\kappa_1}\to A, ~ 2A+B \overset{\kappa_2}\to A+2B \}$, has ACR in species $A$ (with ACR-value $x_A^*= \sqrt{\kappa_1/ \kappa_2}$).  However, these systems do not have CACR  in $A$, as there are $(x_A^*, x_B^*)$ as in Definition~\ref{def:CACR} with $x_A^*= - \sqrt{\kappa_1/ \kappa_2}$.
%\end{example}

\begin{example}[Example~\ref{ex:singular}, continued] \label{ex:singular-2}
 For network $G=\{  3A \overset{1}{\to} 4A,~ A+2B \overset{1}{\to} 2A+2B,~ 
	2A \overset{2}{\to} A,~ A+B \overset{4}{\to} B,~ 
	A \overset{5}{\to} 2A,~ 2A+B \overset{1}{\to} 2A+2B,~ 3B \overset{1}{\to} 4B,~ 
	A+B \overset{2}{\to} A,~ 2B \overset{4}{\to} B,~ 
	B \overset{5}{\to} 2B\}$ we saw that the only positive root of $f_A=f_B=0$ is $(x_A, x_B)=(1,2)$, and so this system has ACR in both species. However, $(4,2+3i)$, $(4,2-3i)$, $(-2,2+3i)\in(\C^*)^2$ are also roots of $f_A=f_B=0$ and so the system does not have CACR. 
\end{example}

Another example with ACR but not CACR is Example~\ref{ex:ideals-network}.

What we gain from expanding the definition of ACR to CACR is \emph{necessary} and sufficient conditions for CACR. 
Moreover, in theory, CACR can be detected algorithmically (if computations over $\mathbb C$ are possible).  
We record this claim in Proposition~\ref{prop:sat} and Algorithm~\ref{algo:CACR} below, which make use of radical ideals~\eqref{eq:radical} and saturation ideals (Definition~\ref{def:colon}).
We also extend steady-state ideals to reside over $\mathbb{C}$ as follows:

\begin{definition} \label{def:complex-steady-state-ideal}
For a mass-action system $(G, \kappa)$, the {\em complex-steady-state ideal} is the ideal of $\mathbb C [x_1, x_2, \ldots , x_n] $ 
	generated by the right-hand sides of the ODEs in~\eqref{eq:ODE-mass-action}. %:
%	\[
%	I(G,\kappa) ~:=~
%	\langle
%	f_{\kappa}(x)_1, ~f_{\kappa}(x)_2, ~\dots, ~f_{\kappa}(x)_n
%	\rangle~.
%	\]
\end{definition}

%but we first recall the radical of an ideal as in~\eqref{eq:radical} (in this case we consider polynomials with complex coefficients) and the saturation ideal $(I : h^\infty)$ of the ideal $I$ with respect to the polynomial $h$ introduced as in Definition~\ref{def:colon}, to the ring of polynomials with complex coefficients:

%We are now ready to state the following proposition:

For part (3) of the next result, recall that any ideal of $\mathbb{C}[x_i]$ (with only one variable $x_i$)
is principally generated, so we may speak of ``a generator''.  Also, the proof makes use of Hilbert's Nullstellensatz (e.g., see~\cite{CLO15}).

\begin{proposition}[CACR and ideals] \label{prop:sat}
Let $( G, \kappa^*)$ be a mass-action system with $n$ species and complex-steady-state ideal $I$. 
Let $\mathfrak m:=  x_1 x_2 \cdots x_n$, and let $ 1 \leq i \leq n$.
Then $( G, \kappa^*)$ has vacuous CACR if and only if $ \sqrt{(I : \mathfrak m^\infty)} = \langle 1 \rangle $.  On the other hand, for $\alpha \in \mathbb C^*$, the following are equivalent:
	\begin{enumerate}
	\item $( G, \kappa^*)$ has non-vacuous CACR in species $X_i$ with CACR-value $\alpha$,
	\item $x_i - \alpha $ is in $ \sqrt{(I : \mathfrak m^\infty)}$, and 
	$ \sqrt{(I : \mathfrak m^\infty)} \neq \langle 1 \rangle$,  %for some  $\alpha \in \mathbb{C}^*$,
%		the ideal $\sqrt{(I : \mathfrak m^\infty)}$ contains a binomial of the form $x_i - c$, with $c \in \mathbb{C}^*$; and
	\item $x_i - \alpha$ is a generator of $ \sqrt{(I : \mathfrak m^\infty) \cap \mathbb{C}[x_i] }$. %, for some  $\alpha \in \mathbb{C}^*$.
%	the ideal $\sqrt{(I : \mathfrak m^\infty) \cap \mathbb{C}[x_i] }$ contains a binomial of the form $x_i - c$, with $c \in \mathbb{C}^*$.
	\end{enumerate}
\end{proposition}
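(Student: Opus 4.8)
The plan is to establish the first biconditional about vacuous CACR directly from Hilbert's Nullstellensatz, and then prove the equivalence of (1)--(3) by relating the common zero set of $I$ in $(\mathbb{C}^*)^n$ to the variety of the saturation $(I:\mathfrak{m}^\infty)$. The key observation, already recorded in Remark~\ref{rmk:saturation}, is that the zeros of $I$ with all coordinates nonzero coincide with the zeros of $(I:\mathfrak{m}^\infty)$ with all coordinates nonzero; but in fact something stronger holds here. I would first argue that $V_{\mathbb{C}}\big((I:\mathfrak{m}^\infty)\big)$ is precisely the Zariski closure of $V_{\mathbb{C}}(I)\cap(\mathbb{C}^*)^n$, so that in particular $V_{\mathbb{C}}\big((I:\mathfrak{m}^\infty)\big)$ has no component contained in a coordinate hyperplane $\{x_j=0\}$. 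This is the standard fact that saturating by $\mathfrak{m}$ removes exactly the components lying inside $V(\mathfrak{m})$. Passing to the radical does not change the variety, so $V_{\mathbb{C}}\big(\sqrt{(I:\mathfrak{m}^\infty)}\big) = \overline{V_{\mathbb{C}}(I)\cap(\mathbb{C}^*)^n}$.

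For the vacuous-CACR statement: by Definition~\ref{def:CACR}, $(G,\kappa^*)$ has vacuous CACR iff $V_{\mathbb{C}}(I)\cap(\mathbb{C}^*)^n=\varnothing$, iff its Zariski closure is empty, iff $V_{\mathbb{C}}\big(\sqrt{(I:\mathfrak{m}^\infty)}\big)=\varnothing$, which by Hilbert's (weak) Nullstellensatz over $\mathbb{C}$ is equivalent to $\sqrt{(I:\mathfrak{m}^\infty)}=\langle 1\rangle$. For the equivalences, assume now we are \emph{not} in the vacuous case, so $W:=V_{\mathbb{C}}(I)\cap(\mathbb{C}^*)^n\neq\varnothing$ and $\sqrt{(I:\mathfrak{m}^\infty)}\neq\langle1\rangle$. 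Since $W$ is Zariski dense in $V_{\mathbb{C}}\big(\sqrt{(I:\mathfrak{m}^\infty)}\big)$, a polynomial vanishes on all of $W$ iff it vanishes on $V_{\mathbb{C}}\big(\sqrt{(I:\mathfrak{m}^\infty)}\big)$, which by Hilbert's Nullstellensatz (and radicality) is iff it lies in $\sqrt{(I:\mathfrak{m}^\infty)}$. Applying this to $h=x_i-\alpha$ gives: $(G,\kappa^*)$ has CACR in $X_i$ with CACR-value $\alpha$, i.e.\ $x_i^*=\alpha$ for all $x^*\in W$, iff $x_i-\alpha$ vanishes on $W$, iff $x_i-\alpha\in\sqrt{(I:\mathfrak{m}^\infty)}$. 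This is exactly the equivalence of (1) and (2).

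For the equivalence with (3), I would use that $\sqrt{(I:\mathfrak{m}^\infty)}\cap\mathbb{C}[x_i]$ is the elimination ideal cutting out the Zariski closure of the projection of $W$ onto the $x_i$-coordinate, and that this ideal equals $\sqrt{(I:\mathfrak{m}^\infty)\cap\mathbb{C}[x_i]}$ because contraction of a radical ideal to a subring is radical. The projection $\pi_i(W)\subseteq\mathbb{C}^*$ is a nonempty constructible set, so its closure is either a single point or all of $\mathbb{C}$; it is the single point $\{\alpha\}$ precisely when $x_i^*=\alpha$ on $W$, and in that case the (radical) vanishing ideal of $\{\alpha\}$ in $\mathbb{C}[x_i]$ is generated by $x_i-\alpha$. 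Conversely, if $x_i-\alpha$ generates $\sqrt{(I:\mathfrak{m}^\infty)\cap\mathbb{C}[x_i]}$, then $\overline{\pi_i(W)}=\{\alpha\}$, forcing $x_i^*=\alpha$ on $W$, i.e.\ non-vacuous CACR in $X_i$ with value $\alpha$. (The hypothesis $\alpha\in\mathbb{C}^*$ is automatic here since $W\subseteq(\mathbb{C}^*)^n$.)

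\textbf{Main obstacle.} The only genuinely delicate point is the claim that $V_{\mathbb{C}}\big((I:\mathfrak{m}^\infty)\big)$ is the Zariski closure of $V_{\mathbb{C}}(I)\setminus V(\mathfrak{m})$ --- equivalently, that saturation removes precisely the embedded and isolated components supported on $V(\mathfrak{m})$ and nothing more. Over a Noetherian ring this is standard (it is how one usually justifies Remark~\ref{rmk:saturation}), but since the statement is phrased over $\mathbb{C}$ rather than $\mathbb{Q}$ and mixes in the radical, I would want to state it cleanly as: for any ideal $I$ and any $f$, $V_{\mathbb{C}}(I:f^\infty)=\overline{V_{\mathbb{C}}(I)\setminus V_{\mathbb{C}}(f)}$, and then cite a standard reference (e.g.\ the saturation discussion in \cite{CLO15}). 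Everything else is a routine application of the Nullstellensatz and the fact that the contraction of a radical ideal is radical.
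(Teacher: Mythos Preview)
Your proposal is correct, and the overall architecture (Nullstellensatz for the vacuous case, then (1)$\Leftrightarrow$(2), then (2)$\Leftrightarrow$(3)) matches the paper's. However, your argument for (1)$\Leftrightarrow$(2) takes a more geometric route than the paper's.

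You go through the identification $V_{\mathbb C}\big((I:\mathfrak m^\infty)\big)=\overline{V_{\mathbb C}(I)\setminus V_{\mathbb C}(\mathfrak m)}$, and then use density of $W$ in this closure to transfer vanishing of $x_i-\alpha$ back and forth. The paper instead argues directly at the level of ideal membership: if CACR holds with value $\alpha$, then the polynomial $\mathfrak m\,(x_i-\alpha)$ vanishes on \emph{every} point of $V_{\mathbb C}(I)$ (on the coordinate hyperplanes because $\mathfrak m$ vanishes there, and on $(\mathbb C^*)^n$ by CACR), so Hilbert's Nullstellensatz gives $\mathfrak m^k(x_i-\alpha)^k\in I$, hence $x_i-\alpha\in\sqrt{(I:\mathfrak m^\infty)}$. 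The converse is just evaluation at a point of $(\mathbb C^*)^n$. This ``multiply by $\mathfrak m$'' trick sidesteps entirely the closure fact you flagged as your main obstacle, making the paper's argument shorter and more self-contained. On the other hand, your approach is more conceptual and, as a bonus, gives a fuller justification of (2)$\Leftrightarrow$(3) via the projection/elimination picture, whereas the paper simply declares that equivalence ``straightforward.'' Your observation that contraction of a radical ideal is radical (so $\sqrt{(I:\mathfrak m^\infty)}\cap\mathbb C[x_i]=\sqrt{(I:\mathfrak m^\infty)\cap\mathbb C[x_i]}$) is exactly what is needed there.
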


\begin{proof} 
The equivalence of 
vacuous CACR and $ \sqrt{(I : \mathfrak m^\infty)} = \langle 1 \rangle $ is readily deduced from \cite[Theorem~10 of \S~4]{CLO15} and the Weak Nullstellensatz.  
It is also straightforward to see that (2) and (3) are equivalent, so we show the equivalence of (1) and (2) below.

First assume that the system has non-vacuous CACR in species $X_i$ with CACR-value $\alpha$. 
%, and $x_i=c$ for all $x^* \in (\C^*)^s$ satisfying $f_{\kappa}(x^*)_1=f_{\kappa}(x^*)_2=\dots=f_{\kappa}(x^*)_n=0$. 
Then, $\mathfrak m \, (x_i - \alpha)$ is identically zero over all the zeros of $I$ in $\C^n$. We deduce from Hilbert's Nullstellensatz that there exists a natural number $k$ such that $ \mathfrak m^k \, (x_i - \alpha)^k \in I$. Thus, $(x_i - \alpha)^k \in (I : \mathfrak m^\infty)$
and so 
$x_i - \alpha \in \sqrt{(I : \mathfrak m^\infty)}$.

Now assume that $x_i - \alpha$ is in $\sqrt{(I : \mathfrak m^\infty)}$. Then $\mathfrak m^k(x_i - \alpha)^{k} \in I$ for some $k$. Consider $\tilde{x}^*  \in (\C^*)^n$ for which $f_{\kappa}(\tilde{x}^*)_1=f_{\kappa}(\tilde{x}^*)_2=\dots=f_{\kappa}(\tilde{x}^*)_n=0$.  It follows that 
$(\tilde{x}^*_1 \tilde{x}^*_2 \dots \tilde{x}^*_n)^k(\tilde{x}^*_i- \alpha)^{k}=0$.  As $(\tilde{x}^*_1 \tilde{x}^*_2  \dots \tilde{x}^*_n) \neq 0$, we have $(\tilde{x}^*_i- \alpha)^{k}=0$, and so $\tilde{x}^*_i= \alpha$.  
% for all $\tilde{x}^*$ zero of $I$ in $(\C^*)^n$, as we wanted to prove.
Thus, we have CACR in $X_i$ with CACR-value $\alpha$.
\end{proof}

Proposition~\ref{prop:sat} yields Algorithm~\ref{algo:CACR} for detecting CACR.  The algorithm does not compute the radical ideal until Step~3 because it is cheaper to do computations in one variable. For details about the computations of these ideals and generators, see the book \cite{CLO15}.

\begin{algorithm}

\medskip

\noindent{\bf Input:} The complex-steady-state ideal $I$ of a mass-action system $(G, \kappa^*)$. 
%$f_{\kappa,1}, \dots, f_{\kappa,n}$ and $i \in \{1, \dots, n\}$.  {\color{red} Maybe $I$ here, maybe mention the system $(G,\kappa)$?}

\noindent{\bf Output:} The CACR-value if $(G, \kappa^*)$ has non-vacuous CACR in $X_i$; ``Vacuous ACR'' or  ``No CACR''  if $(G,\kappa^*)$ has, respectively, vacuous ACR or no CACR in $X_i$.

\medskip

\noindent Step~0: Let $\mathfrak m := x_1 x_2 \dots x_n$, where $n$ is the number of species of $G$.

\noindent Step~1: Compute the saturation ideal $(I: \mathfrak m^\infty)$.

\noindent Step~2: Compute the elimination ideal $I_i :=(I :  \mathfrak m^\infty) 
\cap \mathbb{C}[x_i]$.  

\noindent Step~3: Compute $\sqrt{I_i}$ and find a generator $g$.

\noindent Step~4:
	If $g$ has degree 1, return its root. 
	If $g$ has degree 0, return ``Vacuous CACR''.  
	Otherwise, return ``No CACR''. 

%\medskip
%If $g$ has degree 1, return its root. Otherwise, return ``No''. 

\caption{Algorithm for CACR.}\label{algo:CACR}
\end{algorithm}

\begin{proof}[Proof of correctness of Algorithm~\ref{algo:CACR}]
The correctness of this algorithm follows directly from Proposition~\ref{prop:sat} and the fact that the generator $g$ in Step~3 can not be the degree-1 polynomial $g=x_i$, due to having performed saturation in Step~1.
%Algorithm~\ref{algo:CACR} detects if the mass-action system $(G,\kappa)$ has CACR in species $x_i$. 
%As noted earlier, any polynomial ideal in one variable has a single generator and hence the polynomial $g$ in Step~3 can be computed. 
%The worst possible cases would be $g=1$ if $(I: \mathfrak m^\infty)=\R[x_1,x_2,\dots,x_n]$ or $g=0$ if $I_i=\langle 0 \rangle$, but in both cases $g$ does not have degree $1$ and the algorithm would return ``No''. 
\end{proof}

\section{Numerically sampling real points} \label{sec:sampling-real-points}

Here we give Macaulay2 code for Example \ref{ex:sampling-real-points}.  We use the {\tt ReactionNetworks.m2} package where the one-site distributive phosphorylation network can be called as follows:

\begin{verbatim}
N = oneSiteModificationA();
\end{verbatim}

We can also use the {\tt ReactionNetworks.m2} package to fix random reaction rates and conserved quantities.

\begin{verbatim}
R = createRing(N,QQ);
I = trim ideal (subRandomReactionRates N);
S=QQ[N.ConcentrationRates];
J = sub(I, S);
\end{verbatim}

We then can then choose a random point and use {\tt NumericalAlgebraicGeometry.m2} to solve the start system of the homotopy described in equation \ref{eq:homotopy}.

\begin{verbatim}
w = apply(6, i-> random QQ);

gen=flatten entries gens J;     
var=flatten entries vars ring J;
N=length var;     
D=length gen;
T=QQ[var| toList (lambda_1..lambda_(D))];
K=sub(J,T); 
var=apply(N,i->sub(var_i,T));
gen=apply(D, i->sub(gen_i, T));
Jc = jacobian (K);

beta = apply(D, i-> random QQ);

Hstart = 
          apply(#gen, i-> gen_i+beta_i) |
          apply(N, i-> ((var_(i)-w_(i)) + sum (apply (D, j -> lambda_(j+1)*Jc_(i,j) ) ) ) ); 
	  
sols = solveSystem (Hstart);
\end{verbatim}

Finally, we can construct the target system and track the start solutions to our target solutions.  We finish by applying a real filter to the target solutions.

\begin{verbatim}
Htarget =  apply(#gen, i-> gen_i) |
          apply(N, i-> ((var_(i)-w_(i)) + sum (apply (D, j -> lambda_(j+1)*Jc_(i,j) ) ) ) ) 	  

solsT = track(Hstart, Htarget, sols, gamma=>0.6+0.8*ii)  

realPoints(solsT)
\end{verbatim}

\end{document}